\newcommand{\N}{\ensuremath{\mathbb{N}}}
\newcommand{\Z}{\ensuremath{\mathbb{Z}}}
\newcommand{\C}{\ensuremath{\mathbb{C}}}
\newtheorem{thm}{Theorem}[section]
\newtheorem{cor}[thm]{Corollary}
\newtheorem{lemma}[thm]{Lemma}
\newtheorem{prop}[thm]{Proposition}
\theoremstyle{remark}
\newtheorem{rmk}[thm]{Remark}
\theoremstyle{definition}
\newtheorem{defin}[thm]{Definition}
\newcommand{\on}{\operatorname}
\newcommand{\s}{\mathfrak s}
\newcommand{\p}{\mathfrak p}
\newcommand{\uu}{\mathfrak{u'}}
\newcommand{\vv}{\mathfrak{v}}
\begin{document}

\title{Self-injective Jacobian algebras from Postnikov diagrams}
\author{Andrea Pasquali}
\address{Dept.~of Mathematics, Uppsala University, P.O.~Box 480, 751 06 Uppsala, Sweden}
\email{andrea.pasquali@math.uu.se}
\maketitle

\begin{abstract}
  We study a finite-dimensional algebra $\Lambda$ constructed from a Postnikov diagram $D$ in a disk, obtained from the dimer algebra of Baur-King-Marsh 
  by factoring out the ideal generated by the boundary idempotent.
  Thus, $\Lambda$ is isomorphic to the stable endomorphism algebra of a cluster tilting module $T\in\on{CM}(B)$ introduced by Jensen-King-Su in order to categorify the cluster algebra structure 
  of $\C[\on{Gr}_k(\C^n)]$. We show that $\Lambda$ is self-injective if and only if $D$ has a certain rotational symmetry. 
  In this case, $\Lambda$ is the Jacobian algebra of a self-injective quiver with potential, which implies that its truncated Jacobian algebras in the sense of Herschend-Iyama are 2-representation finite.
  We study cuts and mutations of such quivers with potential leading to some new 2-representation finite algebras.
\end{abstract}

\section*{Introduction}
In this article we study algebras constructed from $(k,n)$-Postnikov diagrams. These are configurations of oriented curves in the disk satisfying some axioms, and were defined in \cite{Pos06}
to study total positivity of the Grassmannian $\on{Gr}_k(\mathbb C^n)$. 
The combinatorial data of such a diagram has been shown in \cite{OPS15} to be equivalent to the data of a maximal noncrossing collection of $k$-element subsets of $\left\{ 1, \dots, n \right\}$.

To a Postnikov diagram $D$ one can associate (see \cite{BKM16}) a planar ice quiver with potential $(Q, W, F) = (Q, W, F)(D)$, and consider the frozen Jacobian algebra $A = A(D)$ (which is infinite dimensional).
If one then quotients out the idempotent $e$ corresponding to the boundary (frozen) vertices, one gets a quiver with potential $(\underline Q,\underline W)$ whose Jacobian algebra $\Lambda = \Lambda(D)$ is
finite dimensional, and this is the main object of our study.

One can also label the vertices of $Q(D)$ by the $k$-element subsets appearing in the maximal noncrossing collection corresponding to $D$.
Postnikov diagrams were used in \cite{Sco06} to show that the homogeneous coordinate ring of $\on{Gr}_k(\mathbb C^n)$ is a cluster algebra: the $k$-element subsets are labels for the Pl\"{u}cker coordinates, which are cluster variables.
The maximal noncrossing collections correspond precisely to clusters, and indeed the quiver $Q$ corresponds to the quiver of the cluster given by its collection.
By \cite{OPS15}, every cluster consisting of Pl\"ucker coordinates appears in this way (since all maximal noncrossing collections appear as the labels of such a quiver $Q(D)$).

There is an algebra $B$, depending only on $k$ and $n$, which was used in \cite{JKS16} to categorify the cluster algebra structure of the homogeneous coordinate ring of $\on{Gr}_k(\mathbb C^n)$, building on
the categorification of the coordinate ring of an affine open cell constructed in \cite{GLS08}.
The categorification takes place in $\on{CM}(B)$, where Jensen-King-Su define a Cohen-Macaulay $B$-module $L_I$ of rank 1 for every $k$-element subset $I$ of $\left\{ 1, \dots n\right\}$.
Given a maximal noncrossing collection $\mathbb I$, one can define a module 
$$
T = \bigoplus_{I\in \mathbb I} L_I
$$
and this module is shown in \cite{JKS16} to be cluster tilting in $\on{CM}(B)$. One of the main results in \cite{BKM16} is that there is an isomorphism 
\begin{align*}
  A\cong \on{End}_B(T)
\end{align*}
where $A$ is the frozen Jacobian algebra corresponding to the Postnikov diagram associated to $\mathbb I$.
The frozen vertices correspond to projective-injective $B$-modules, so there is an isomorphism 
\begin{align*}
  \Lambda \cong \underline{\on{End}}_B(T).
\end{align*}

It turns out that $\Lambda$ is the same if we take the completed algebra $\hat B$ instead of $B$, so we can use results about the completed case. 
The stable category $\underline{\on{CM}}(\hat B)$ is a 2-Calabi-Yau triangulated category with a cluster tilting object $T$, and we 
can use the machinery of \cite{IO13} to prove results about the algebra $\underline{\on{End}}_{\hat B}(T)$ (which is, as we said, isomorphic to $\Lambda$).

Our main result is that $\Lambda$ is self-injective if and only if $D$ is symmetric under rotation by $2k\pi/n$ (which corresponds to 
$\mathbb I$ being invariant with respect to adding $k$ to all elements).

Thus Postnikov diagrams turn out to be a new source of planar self-injective quivers with potential in the sense of \cite{HI11b}.
Previously, the only known planar self-injective quivers with potential were mutation equivalent to so called ``squares'', ``triangles'', or ``$n$-gons'', in the 
terminology of \cite[\S9]{HI11b}. The algebras coming in this way from ``$n$-gons'' are precisely the self-injective cluster tilted algebras classified by Ringel (\cite{Rin08}). 
We construct some new examples not belonging to the above families, thus answering \cite[Question 10.1(1)]{HI11b} in the negative. 
In fact, two counterexamples had already been found (and we recover them), but they were not published.
In particular, we construct an infinite family of algebras for which the Nakayama permutation has arbitrarily large order.
Previously, the only known self-injective planar quivers with potential with Nakayama permutation of order at least 6 were mutation equivalent to ``$n$-gons'', and our examples are not 
of this type.

Self-injective Jacobian algebras are precisely the 3-preprojective algebras of 2-representation finite algebras (\cite{HI11b}).
The latter can be constructed by choosing an appropriate set of arrows $C$ (called a cut) in the quiver.
We exploit the results of Herschend-Iyama to prove that for a given symmetric Postnikov diagram, all such 2-representation finite algebras $\Lambda_C$ are iterated 2-APR tilts of each other, so in particular they are derived equivalent.
Moreover, it is interesting to know when a cut is invariant under the Nakayama automorphism, since in this case $\Lambda_C$ is twisted $2\frac{l-1}{l}$-Calabi-Yau for some $l$ (\cite{HI11}).
In our setting, the Nakayama automorphism is simply given by rotation in the plane, so this condition is easily accessible.

One can study mutations of cluster tilting objects, of quivers with potential, or of Postnikov diagrams (the latter is called geometric exchange). These all correspond to each other, with the caveat that only certain vertices of the quiver
become mutable (since geometric exchange only works for some regions of the disk).
In \cite{Pos06} it is proved that geometric exchange is transitive on the set of $(k,n)$-Postnikov diagrams, and we deduce that mutation is transitive on cluster tilting objects in $\on{CM}(\hat B)$ whose 
indecomposable summands have rank 1.
We also give a direct proof of a special case of a theorem which appeared in \cite{HI11b} about mutations along a Nakayama orbit.

It should be noted that many of the statements we present are combinations of published results and probably known to experts, even though they cannot be found in the literature as we state them.
The original contributions of this paper are in the results of Section \ref{sec:main}, Section \ref{sec:cuts} and in the new examples of Sections~\ref{sec:ex} and \ref{sec:figs}.

The structure of this article is as follows. In Section \ref{sec:notation} we set up some notation and conventions. 
In Section \ref{sec:jacobian} we recall the definitions we need about ice quivers with potential and frozen Jacobian algebras.
In Section \ref{sec:post} we define Postnikov diagrams, explain their combinatorics and use them to construct ice quivers with potential.
In Section \ref{sec:selfinj} we collect some results about cluster tilting objects with self-injective endomorphism algebras.
In Section \ref{sec:B} we define the algebra $B$ and the modules $L_I$, as well as
compute the action of the Serre functor of $\underline{\on{CM}}(\hat B)$ on the modules $L_I$.
In Section \ref{sec:tilting} we define the module $T$ and study cluster tilting objects and their mutations in $\on{CM}(\hat B)$. We interpret those
mutations in terms of mutations of quivers with potential and geometric exchange.
In Section \ref{sec:main} we consider Postnikov diagrams which are rotation symmetric, and prove our main result.
In Section \ref{sec:cuts} we study cuts for self-injective quivers with potential arising from symmetric Postnikov diagrams.
In Sections \ref{sec:ex} and \ref{sec:figs} we present some examples of self-injective quivers with potential constructed in this way.
We recover an infinite family found in \cite{HI11b}, as well as some members of another infinite family. We construct a new infinite family, and 
finally some sporadic cases.

\subsection*{Acknowledgement}
I am thankful to my advisor Martin Herschend for the many helpful discussions and comments. I would like to thank Jakob Zimmermann for his help with the computational aspects of 
determining self-injectivity, and both him and Laertis Vaso for suggestions about the manuscript. 
I also thank the anonymous referees for spotting issues and suggesting improvements to previous versions of the paper.
Finally, my thanks go to Karin Baur, Alastair King and Robert Marsh, for their helpful comments.

\section{Notation and conventions}\label{sec:notation}
By an algebra $\Lambda$ we mean a unital, associative and basic $\mathbb C$-algebra unless otherwise specified. We write $\Lambda\on{mod}$ ($\on{mod}\Lambda$) for the category of finitely generated 
left (right) $\Lambda$-modules. If $\Lambda$ is graded by an abelian group $G$, we write $\Lambda\on{mod}^G$ ($\on{mod}^G\Lambda$) for the category of $G$-graded finitely generated left (right) $\Lambda$-modules.
In various contexts we will denote by $D$ the functor $\on{Hom}_\C(-, \C)$.
Unless otherwise specified, ``module'' means object of $\Lambda\on{mod}$.
If $\varphi:\Lambda\to \Lambda$ is a ring automorphism and $M\in \Lambda\on{mod}$, we define $\,_{\varphi}M\in \Lambda\on{mod}$ to be $M$ as an abelian group,
with $a*_{\varphi}m = \varphi(a)m$. Similarly we define $M_\varphi$ by $m*_\varphi a = m\varphi(a)$, for $M\in \on{mod}\Lambda$.
The composition $g\circ f$ means that $f$ is applied first and $g$ second.

Throughout this article, we will fix two positive integers $k\leq n$.
We denote by $[n]$ the set $\Z/n\Z$, usually equipped with the cyclic ordering. We write $\binom{[n]}{k}$ for the set of $k$-element subsets of $[n]$. For a subset $I$ of $[n]$, we write
\begin{align*}
  I+k := \left\{ i+k \ |\ i\in I \right\} \subseteq [n]
\end{align*}
and for a subset $\mathbb I$ of $\binom{[n]}{k}$, we write
\begin{align*}
  \mathbb I+k := \left\{ I+k\ |\ I\in \mathbb I \right\}\subseteq \binom{[n]}{k}.
\end{align*}

\section{Ice quivers with potential}\label{sec:jacobian}

In this section we recall some definitions, notation and facts about (ice) quivers with potential (see \cite{BIRS11} for a reference).
Let $Q = (Q_0, Q_1)$ be a finite quiver without loops and 2-cycles. We can complete the path algebra $\C Q$ with respect to the $\left<Q_1\right>$-adic topology, and denote the completion by $\widehat{\C Q}$. 
A \emph{potential} on $Q$ is an element 
\begin{align*}
  W\in \widehat{\C Q}\left/\overline{\left[\widehat{\C Q}, \widehat{\C Q}\right]}\right.,
\end{align*}
where $\left[\widehat{\C Q}, \widehat{\C Q}\right]$ is the vector space spanned by commutators in $\widehat{\C Q}$, and $\overline{\phantom{xx}}$ denotes closure in the $\left<Q_1\right>$-adic topology.
In other words, $W$ is a (possibly infinite) linear combination of cycles in $Q$, where we identify cycles up to cyclic permutation of their arrows.
We say that $W$ is finite if it can be written as a finite such linear combination.
For $a\in Q_1$, we can define the \emph{cyclic derivative} $\partial_a:\widehat {\C Q}\to \widehat {\C Q}$ by $$\partial_a(a_1\cdots a_l) = \sum_{a_i = a}a_{i+1}\cdots a_la_1\cdots a_{i-1}$$
and extended by linearity and continuity on $\widehat{\C Q}$. We also get an induced map $\partial_a:\widehat {\C Q}
\left/\overline{\left[\widehat{\C Q}, \widehat{\C Q}\right]}\to\widehat {\C Q}\right.$. 
\begin{defin}
  A \emph{quiver with potential} is a pair $(Q,W)$ where $Q$ is a quiver without loops and 2-cycles and $W$ is a potential on $Q$.
  The \emph{Jacobian algebra} $\hat\wp(Q, W)$ is the algebra
\begin{align*}
  \hat\wp(Q, W) = \widehat{\C Q}\left/\overline{\left<\partial_aW \ |\ a\in Q_1\right>}.\right.
\end{align*}
\end{defin}
We can generalise this definition slightly by allowing frozen vertices. 
\begin{defin}
  An \emph{ice quiver with potential} is a triple $(Q, W, F)$ where 
  $(Q, W)$ is a quiver with potential, and $F$ is a subset of $Q_0$ (the elements of $F$ are called \emph{frozen vertices}).
  Call $Q_F$ the set of arrows of $Q$ that start and end at a frozen vertex.
  The \emph{frozen Jacobian algebra} $\hat\wp(Q, W, F)$ is the algebra
  \begin{align*}
    \hat\wp(Q, W, F) = \widehat{\C Q}\left/\overline{\left<\partial_aW \ |\ a\in Q_1\setminus Q_F\right>}.\right.
  \end{align*}
  In other words, we do not take derivatives with respect to arrows between the frozen vertices.
\end{defin}
Given an ice quiver with potential $(Q, W,F)$, one can construct a quiver with potential $(\underline Q, \underline W)$ as follows. Set $\underline Q$ to be the quiver obtained from $Q$ by removing the frozen vertices and all adjacent arrows, 
and define $\underline W$ to be the image of $W$ under the quotient map $\widehat{\C Q}\to \widehat{\C\underline Q}$. 
Then we have $\hat\wp(Q, W, F)/\left< F\right>\cong \hat\wp(\underline Q, \underline W)$, where $\langle F\rangle$ is the ideal generated by the sum of the idempotents corresponding to vertices in $F$.

If $W$ is finite, we can also define a non-completed Jacobian algebra $\wp(Q, W)$ by the same construction without all the completions.
In this article, the quivers with potential $(\underline Q, \underline W)$ which appear have the property that the completed and non-completed Jacobian algebras are isomorphic. In the rest of this section, we lay the ground 
for proving this.
Let $(Q, W)$ be a quiver with finite potential. There is a canonical map $\wp(Q, W)\to \hat\wp(Q, W)$, but this map is in general neither injective nor surjective.
\begin{prop}\label{prop:admissible}
  If $(Q,W)$ is a quiver with finite potential such that $\left<\partial_aW \ |\ a\in Q_1\right>$ is an admissible ideal of $\C Q$, then the canonical map $\wp(Q, W)\to \hat\wp(Q, W)$ is an isomorphism.
\end{prop}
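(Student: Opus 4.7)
Write $\mathfrak m := \langle Q_1\rangle$ for the arrow ideal, used interchangeably in $\C Q$ and $\widehat{\C Q}$, and let $J := \langle \partial_aW \mid a\in Q_1\rangle\subseteq \C Q$, so that $\hat J$ denotes the closure in $\widehat{\C Q}$ of the corresponding ideal $J^{\mathrm{hat}}\subseteq\widehat{\C Q}$ generated by the same elements. The admissibility hypothesis provides an integer $N$ with $\mathfrak m^N\subseteq J\subseteq\mathfrak m^2$, so $\C Q/J$ is already a finite-dimensional (hence complete) algebra. The plan is to combine the standard completion isomorphism $\C Q/\mathfrak m^N\xrightarrow{\sim}\widehat{\C Q}/\mathfrak m^N$ with the identification $\hat J = J+\mathfrak m^N$ inside $\widehat{\C Q}$, and then apply the isomorphism theorems.

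The central step is therefore to prove $\hat J = J + \mathfrak m^N$ as subsets of $\widehat{\C Q}$. The inclusion $\supseteq$ is immediate, since $J\subseteq\hat J$ by definition and $\mathfrak m^N\subseteq J\subseteq\hat J$ by admissibility. For $\subseteq$, I first observe that $J + \mathfrak m^N$ is closed in $\widehat{\C Q}$, being the preimage of the finite subset $J/\mathfrak m^N$ under the continuous projection $\widehat{\C Q}\twoheadrightarrow\widehat{\C Q}/\mathfrak m^N\cong\C Q/\mathfrak m^N$ onto a discrete target. It therefore suffices to show $J^{\mathrm{hat}}\subseteq J + \mathfrak m^N$, for then $\hat J = \overline{J^{\mathrm{hat}}}\subseteq J + \mathfrak m^N$. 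This in turn follows by a truncation argument: an arbitrary element $\sum_i r_i(\partial_{a_i}W)s_i$ of $J^{\mathrm{hat}}$, with $r_i, s_i\in\widehat{\C Q}$, is congruent modulo $\mathfrak m^N$ to $\sum_i r_i'(\partial_{a_i}W)s_i'$, where $r_i', s_i'\in\C Q$ are the truncations of $r_i, s_i$ to paths of length $<N$; the latter sum lies visibly in $J$, while the discarded tails lie in $\mathfrak m^N\cdot\widehat{\C Q}\subseteq \mathfrak m^N$.

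With the identity $\hat J = J + \mathfrak m^N$ established, standard isomorphism theorems give
\[
  \widehat{\C Q}/\hat J \;=\; \widehat{\C Q}/(J + \mathfrak m^N) \;\cong\; (\widehat{\C Q}/\mathfrak m^N)\big/(J/\mathfrak m^N) \;\cong\; (\C Q/\mathfrak m^N)\big/(J/\mathfrak m^N) \;=\; \C Q/J \;=\; \wp(Q,W),
\]
and a brief diagram chase shows that this composite is inverse to the canonical map $\wp(Q,W)\to\hat\wp(Q,W)$. I expect the only subtle point to be the inclusion $\hat J\subseteq J + \mathfrak m^N$: a priori the closure in the definition of $\hat\wp$ could enlarge the ideal beyond $J^{\mathrm{hat}}$ itself, so admissibility (and not merely the weaker condition $J\subseteq\mathfrak m^2$) must be used genuinely, both to ensure that $J+\mathfrak m^N$ is closed and that coefficient truncation stays inside it.
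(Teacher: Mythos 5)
Your proof is correct and follows essentially the same route as the paper: both use admissibility to obtain $\mathfrak m^N\subseteq J$, identify $\C Q/\mathfrak m^N$ with $\widehat{\C Q}/\mathfrak m^N$, and capture the closure via the closedness of $J+\mathfrak m^N$ --- your identity $\hat J = J+\mathfrak m^N$ is precisely the paper's assertion that $\hat I\subseteq I+\hat J^N$ because $I+\hat J^N$ is closed, together with $J^N=\C Q\cap \hat J^N$. The only differences are cosmetic: you compute $\widehat{\C Q}/\hat J$ directly and make explicit the coefficient-truncation step that the paper leaves implicit, whereas the paper packages the same facts into a commutative diagram and verifies that the induced map $I/J^N\to \hat I/\hat J^N$ is an isomorphism.
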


\begin{proof}
  Call $I = \left<\partial_aW \ |\ a\in Q_1\right>\subseteq \C Q$ and $\hat I = \overline{\left<\partial_aW \ |\ a\in Q_1\right>}\subseteq \widehat{\C Q}$.
  Call $J$ and $\hat J$ the arrow ideals of $\C Q$ and $\widehat{\C Q}$ respectively. 
  By assumption we have that there exists $N\gg0$ such that $J^N\subseteq I$ and then $\hat J^N\subseteq \hat I$.
  Observe that we have that $\widehat{\C Q} = \C Q + \hat J^N$, and that $ J^N = \C Q\cap \hat J^N$. 
  There is a commutative diagram
  \[
    \xymatrix{
      J^N\ar@{^{(}->}[r]\ar@{^{(}->}[d] & I \ar@{^{(}->}[r] \ar@{^{(}->}[d] & \C Q \ar@{->>}[r]\ar@{^{(}->}[d] & \wp(Q, W) \ar[d]\\
      \hat J^N\ar@{^{(}->}[r] & \hat I \ar@{^{(}->}[r] & \widehat{\C Q} \ar@{->>}[r]& \hat\wp(Q, W).
    }
  \]
  We get an induced commutative diagram
  \[
    \xymatrix{
      I/J^N \ar@{^{(}->}[r]\ar[d] & \C Q/J^N \ar@{->>}[r]\ar^{\cong}[d] & \wp(Q, W) \ar[d]\\
      \hat I/ \hat J^N \ar@{^{(}->}[r] & \widehat{\C Q}/\hat J^N \ar@{->>}[r] & \hat\wp(Q, W),
    }
  \]
  so it is enough to show that the map $I/J^N\to  \hat I/ \hat J^N $ is an isomorphism.
  This map is injective since $ J^N = \C Q\cap \hat J^N$. Moreover, $\hat I\subseteq I+\hat J^N$ since $I+\hat J^N$ is closed, so the map is surjective.
\end{proof}

\begin{cor}\label{cor:admissible}
  Let $(Q, W, F)$ be an ice quiver with potential. Suppose that $W$ is finite and that every sufficiently long path is equal in $\wp(Q, W, F)$ to a path that goes through a 
  frozen vertex. Then $\wp(\underline Q, \underline W)\cong \hat\wp(\underline Q, \underline W)$.
\end{cor}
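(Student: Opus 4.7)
The plan is to apply Proposition~\ref{prop:admissible} to $(\underline Q, \underline W)$. Since $\underline W$ is the image of a finite potential and hence finite, it suffices to verify that the Jacobian ideal
\[
  \underline I := \left\langle \partial_a \underline W \ \middle|\ a \in \underline Q_1 \right\rangle \subseteq \C \underline Q
\]
is admissible, i.e.\ satisfies $\underline J^N \subseteq \underline I \subseteq \underline J^2$ for some $N$, where $\underline J$ denotes the arrow ideal of $\C \underline Q$.

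For the upper bound, I would fix $N$ as in the hypothesis and show that every path $p$ of length at least $N$ in $\underline Q$ lies in $\underline I$. Regarding $p$ as a path in $Q$ (it uses only unfrozen vertices), the hypothesis yields a path $q$ through a frozen vertex such that $p - q \in \langle \partial_a W \mid a \in Q_1 \setminus Q_F\rangle$ in $\C Q$. Apply the canonical surjection $\pi\colon \C Q \to \C \underline Q$ that annihilates every path through a frozen vertex. Then $\pi(p) = p$ and $\pi(q) = 0$. On the generators of the Jacobian ideal I would check the following two identities: $\pi(\partial_a W) = \partial_a \underline W$ for $a \in \underline Q_1$, and $\pi(\partial_a W) = 0$ for an arrow $a \in Q_1 \setminus Q_F$ having a frozen endpoint (since then every term of $\partial_a W$ must traverse that endpoint). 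Applying $\pi$ to the expression for $p-q$ therefore exhibits $p$ as an element of $\underline I$, giving $\underline J^N \subseteq \underline I$.

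For the lower bound, observe that $\underline Q$ is a subquiver of $Q$, hence inherits the absence of loops and $2$-cycles. Consequently $\underline W$ is a (finite) linear combination of cycles of length at least $3$, and each $\partial_a \underline W$ is a combination of paths of length at least $2$, so $\underline I \subseteq \underline J^2$. With admissibility established, Proposition~\ref{prop:admissible} delivers the isomorphism $\wp(\underline Q, \underline W) \cong \hat\wp(\underline Q, \underline W)$.

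The one step that requires genuine care is the compatibility $\pi(\partial_a W) = \partial_a \underline W$ for $a \in \underline Q_1$: it amounts to saying that cyclic differentiation with respect to an unfrozen arrow commutes with deleting all cycles that pass through a frozen vertex. This is straightforward term-by-term, since a cyclic rotation of a cycle containing a frozen vertex still contains that vertex, but it is the point where the unfrozen/frozen bookkeeping of the ice quiver interacts nontrivially with the derivatives; everything else is a direct unwinding of the definitions.
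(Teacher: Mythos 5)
Your proposal is correct and takes essentially the same approach as the paper: the paper's entire proof is the assertion that the hypothesis means exactly that $\left<\partial_a\underline W \ |\ a\in \underline Q_1\right>$ is admissible, followed by the (implicit) appeal to Proposition~\ref{prop:admissible}. Your write-up merely supplies the bookkeeping the paper leaves unstated---in particular the identities $\pi(\partial_a W)=\partial_a \underline W$ for $a\in \underline Q_1$ and $\pi(\partial_a W)=0$ for arrows with a frozen endpoint, which together show that long paths in $\underline Q$ land in the Jacobian ideal of $(\underline Q,\underline W)$.
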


\begin{proof}
  The assumption means exactly that the ideal $\left<\partial_a\underline W \ |\ a\in \underline Q_1\right>$ is admissible.
\end{proof}

\section{Postnikov diagrams}\label{sec:post}
Let us recall the definition of a $(k,n)$-Postnikov diagram (\cite[\S 14]{Pos06}, \cite[Definition 2.1]{BKM16}).
\begin{defin}
  A \emph{$(k,n)$-Postnikov diagram} $D$ consists of $n$ directed smooth curves (strands), in a disk with $n$ marked points on the boundary, clockwise labelled $1, 2, \dots, n$.
  The strands are also labelled, with strand $i$ starting at $i$ and ending at $i+k$. The following axioms must hold:
  \begin{enumerate}
    \item All crossings are transverse crossings between two distinct strands.
    \item There are finitely many crossings.
    \item Proceeding along a given strand, the other strands crossing it alternate between crossing it from the right and from the left.
    \item If two strands cross at distinct points $P_1$ and $P_2$, then one strand is oriented from $P_1$ to $P_2$ and the other from $P_2$ to $P_1$.
  \end{enumerate}
  For axiom $(3)$, we consider that strands cross at the boundary vertices in the obvious way.
  A Postnikov diagram is defined up to isotopy that fixes the boundary.
  Two Postnikov diagrams are \emph{equivalent} if they are related by a sequence of twisting and untwisting moves as shown in Figure \ref{fig:twisting}. The same moves with opposite orientations are also allowed.
  The moves have to be executed inside a disk with no other strand involved.
  A Postnikov diagram is \emph{reduced} if no untwisting moves can be applied to it. 
\end{defin}
\begin{figure}[h]
\[
\begin{tikzpicture}[baseline=(bb.base),scale = 0.6,
  doublearrow/.style={black, to-to,thick, line join=round,
decorate, decoration={
    zigzag,
    segment length=4,
    amplitude=.9,post=lineto, pre= lineto, pre length = 2pt,
  post length=2pt}}]
\newcommand{\goodarrow}{\arrow{angle 60}}
\newcommand{\dotrad}{0.1cm} 
\path (0,0) node (bb) {}; 


\draw  plot[smooth]
coordinates {(-3,1)  (0,-1)  (3,1)}
[ postaction=decorate, decoration={markings,
  mark= at position 0.5 with \goodarrow}];


\draw  plot[smooth]
coordinates { (3,-1) (0,1) (-3, -1)  }
[ postaction=decorate, decoration={markings,
 mark= at position 0.5 with \goodarrow}];


\draw  plot
coordinates {(7,1) (12,1)}
[ postaction=decorate, decoration={markings,
  mark= at position 0.5 with \goodarrow}];


\draw  plot
coordinates {(12,-1) (7,-1)}
[ postaction=decorate, decoration={markings,
  mark= at position 0.5 with \goodarrow}];

\draw [doublearrow] (4.5,0) -- (5.5,0);

\begin{scope}[shift={(0,-4)}]


\draw (-1.5,0) circle(\dotrad) [fill=black];
\draw (8.5,0) circle(\dotrad) [fill=black];


\draw  plot[smooth]
coordinates {(-1.5,0) (0,1)  (3,-1)}
[ postaction=decorate, decoration={markings,
  mark= at position 0.6 with \goodarrow}];


\draw  plot[smooth]
coordinates {(3,1)(0,-1) (-1.5,0)}
[ postaction=decorate, decoration={markings,
  mark= at position 0.8 with \goodarrow}];


\draw  plot[smooth]
coordinates {(8.5,0) (10, 1) (12,1)}
[ postaction=decorate, decoration={markings,
  mark= at position 0.5 with \goodarrow}];


\draw  plot[smooth]
coordinates {(12,-1)(10,-1) (8.5,0)}
[ postaction=decorate, decoration={markings,
  mark= at position 0.6 with \goodarrow}];

\draw [doublearrow] (4.5,0) -- (5.5,0);

\end{scope}

\end{tikzpicture}
\]
\caption{Twisting and untwisting moves in a Postnikov diagram.}
\label{fig:twisting}
\end{figure}
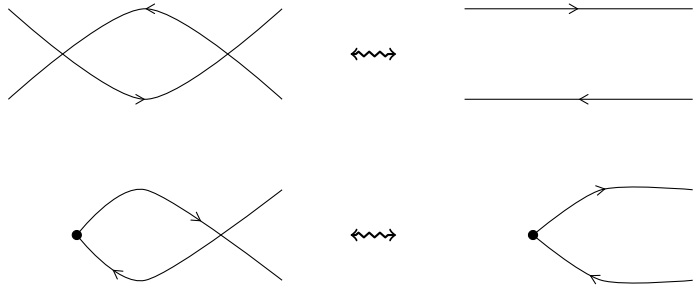
A Postnikov diagram divides the disk into regions, whose boundaries consist of strand segments and pieces of the boundary circle. There are three kinds of such regions,
according to whether their boundary is oriented clockwise, counterclockwise, or alternating in orientation (ignoring the boundary of the disk).
Each alternating region can be assigned a label $I\in \binom{[n]}{k}$ consisting of the names of the strands that have this region to their left side. These labels are all distinct.
Figure \ref{fig:post39} shows a reduced Postnikov diagram with labelled alternating regions. Not all Postnikov diagrams have rotational symmetry, but we are particularly interested in symmetric ones.
\begin{figure}[h]
\[
\begin{tikzpicture}[baseline=(bb.base)]

\newcommand{\goodarrow}{\arrow{angle 60}}
\newcommand{\bstart}{130} 
\newcommand{\ninth}{40} 
\newcommand{\qstart}{150} 
\newcommand{\radius}{4.8cm} 
\newcommand{\eps}{11pt} 
\newcommand{\dotrad}{0.07cm} 

\path (0,0) node (bb) {};


\draw (0,0) circle(\radius) [thick,dashed];

\foreach \n in {1,...,9}
{ \coordinate (b\n) at (\bstart-\ninth*\n:\radius);
  \draw (\bstart-\ninth*\n:\radius+\eps) node {$\n$}; }
  

\foreach \n in {1,2,3,4,5,6,7,8,9} {\draw (b\n) circle(\dotrad) [fill=black];}


\draw  plot[smooth]
coordinates {(b9) (b3)}
[ postaction=decorate, decoration={markings,
  mark= at position 0.2 with \goodarrow, mark= at position 0.3 with \goodarrow,
mark= at position 0.4 with \goodarrow,mark= at position 0.5 with \goodarrow, mark= at position 0.8 with \goodarrow}];
\draw  plot[smooth]
coordinates {(b3) (b6)}
[ postaction=decorate, decoration={markings,
  mark= at position 0.2 with \goodarrow, mark= at position 0.3 with \goodarrow,
mark= at position 0.4 with \goodarrow,mark= at position 0.5 with \goodarrow, mark= at position 0.8 with \goodarrow}];
\draw  plot[smooth]
coordinates {(b6) (b9)}
[ postaction=decorate, decoration={markings,
  mark= at position 0.2 with \goodarrow, mark= at position 0.3 with \goodarrow,
mark= at position 0.4 with \goodarrow,mark= at position 0.5 with \goodarrow, mark= at position 0.8 with \goodarrow}];
\draw  plot[smooth] coordinates {(b7)(230:\radius*25/40) (-30:.25*\radius) (70:\radius*25/40) (b1)}
[ postaction=decorate, decoration={markings,
  mark= at position 0.1 with \goodarrow, mark= at position 0.29 with \goodarrow,
  mark= at position 0.42 with \goodarrow, mark= at position 0.5 with \goodarrow,
  mark= at position 0.6 with \goodarrow, mark= at position 0.69 with \goodarrow,
  mark= at position 0.79 with \goodarrow, mark= at position 0.85 with \goodarrow,
  mark= at position 0.92 with \goodarrow }];
\draw  plot[smooth] coordinates {(b1) (110:\radius*25/40)(-150:\radius*.25) (-50:\radius*25/40)(b4)}
[ postaction=decorate, decoration={markings,
  mark= at position 0.1 with \goodarrow, mark= at position 0.29 with \goodarrow,
  mark= at position 0.42 with \goodarrow, mark= at position 0.5 with \goodarrow,
  mark= at position 0.6 with \goodarrow, mark= at position 0.69 with \goodarrow,
  mark= at position 0.79 with \goodarrow, mark= at position 0.85 with \goodarrow,
  mark= at position 0.92 with \goodarrow }];
\draw  plot[smooth] coordinates {(b4)(-10:\radius*25/40) (90:\radius*.25) (190:\radius*25/40)(b7)}
[ postaction=decorate, decoration={markings,
  mark= at position 0.1 with \goodarrow, mark= at position 0.29 with \goodarrow,
  mark= at position 0.42 with \goodarrow, mark= at position 0.5 with \goodarrow,
  mark= at position 0.6 with \goodarrow, mark= at position 0.69 with \goodarrow,
  mark= at position 0.79 with \goodarrow, mark= at position 0.85 with \goodarrow,
  mark= at position 0.92 with \goodarrow }];
\draw  plot[smooth] coordinates {(b2) (80:\radius*23/40)(30:\radius*7/40) (-25:\radius*26/40)(b5)}
[ postaction=decorate, decoration={markings,
  mark= at position 0.14 with \goodarrow, mark= at position 0.25 with \goodarrow,
  mark= at position 0.35 with \goodarrow, mark= at position 0.47 with \goodarrow,
  mark= at position 0.6 with \goodarrow, mark= at position 0.72 with \goodarrow,
 mark= at position 0.86 with \goodarrow}];
 \draw  plot[smooth] coordinates {(b5) (-40:\radius*23/40)(-90:\radius*7/40) (-145:\radius*26/40)(b8)}
 [ postaction=decorate, decoration={markings,
  mark= at position 0.14 with \goodarrow, mark= at position 0.25 with \goodarrow,
  mark= at position 0.35 with \goodarrow, mark= at position 0.47 with \goodarrow,
  mark= at position 0.6 with \goodarrow, mark= at position 0.72 with \goodarrow,
 mark= at position 0.86 with \goodarrow}];
 \draw  plot[smooth] coordinates {(b8) (200:\radius*23/40)(150:\radius*7/40) (95:\radius*26/40)(b2)}
 [ postaction=decorate, decoration={markings,
  mark= at position 0.14 with \goodarrow, mark= at position 0.25 with \goodarrow,
  mark= at position 0.35 with \goodarrow, mark= at position 0.47 with \goodarrow,
  mark= at position 0.6 with \goodarrow, mark= at position 0.72 with \goodarrow,
 mark= at position 0.86 with \goodarrow}];


\foreach \n/\m/\r in {1/789/0.88, 2/891/0.88, 3/912/0.88, 4/123/0.85, 5/234/0.89, 6/345/0.85, 7/456/0.85, 8/567/.89, 9/678/.87}
{ \draw (\qstart-\ninth*\n:\r*\radius) node (q\m) {$\m$}; }

\foreach \m/\a/\r in {179/88/0.6 , 134/328/0.6, 467/208/0.6, 178/117/0.4, 124/-3/.4, 457/237/.4 , 147/0/0, 127/65/.38, 145/-55/.38, 478/185/.38}
{ \draw (\a:\r*\radius) node (q\m) {$\m$}; }

 \end{tikzpicture}
\]
\caption{A  symmetric $(3,9)$-Postnikov diagram.}
\label{fig:post39}
\end{figure}
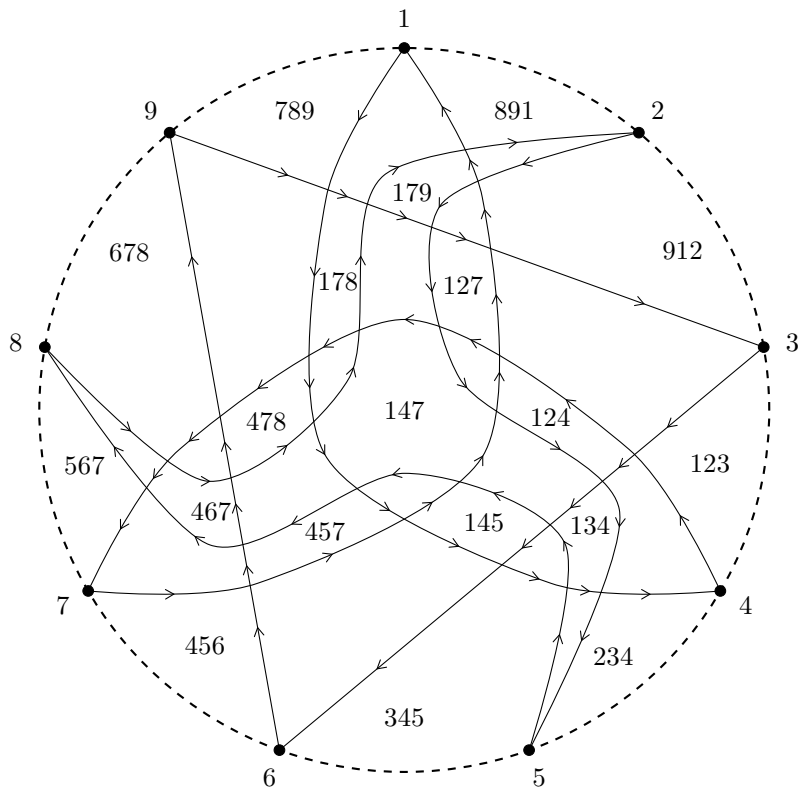
We call $\mathbb I = \mathbb I(D)$ the set of labels corresponding to $D$. 
\begin{defin}
  Two sets $I, J\in\binom{[n]}{k}$ are said to be \emph{noncrossing} or \emph{weakly separated} (see \cite[Definition 3]{Pos06}) if there exist no cyclically ordered $a, b, c, d\in [n]$
  with $a, c\in I\setminus J$ and $b, d\in J\setminus I$.
  We call a collection of $k$-element subsets of $[n]$ a \emph{noncrossing collection} if its elements are pairwise noncrossing. We call it a \emph{maximal noncrossing collection} if it is maximal with respect to inclusion.
\end{defin}

\begin{thm}\label{thm:classnoncross}
  \cite[Theorem 11.1]{OPS15}. Maximal noncrossing collections of elements of $\binom{[n]}{k}$ are precisely sets of labels of alternating regions in reduced $(k,n)$-Postnikov diagrams.
\end{thm}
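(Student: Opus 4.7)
The plan is to prove both directions of the equivalence. In one direction, every reduced $(k,n)$-Postnikov diagram $D$ gives rise to a maximal noncrossing collection $\mathbb I(D)\subseteq\binom{[n]}{k}$; in the other, every such collection arises this way for some $D$.

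For the first direction I would first check the noncrossing property. The key local fact is that two alternating regions sharing a strand as boundary have labels differing by exactly one element, namely the index of that strand (which is added or removed depending on its orientation). Given alternating regions with labels $I$ and $J$, I would connect them by a generic transversal path and track the symmetric difference $I\triangle J$ as the path crosses strands one at a time. Axiom (3) (the alternation of left/right crossings along each strand) together with axiom (4) (the consistent orientation of two strands meeting twice) force that no cyclically interleaved pattern $a,c\in I\setminus J$ and $b,d\in J\setminus I$ can appear; otherwise one could reconstruct a configuration violating one of the axioms. Maximality then follows from a counting argument: Euler's formula applied to the planar graph underlying $D$ (vertices = crossings and boundary marked points, edges = strand segments, faces = regions), combined with reducedness, yields $|\mathbb I(D)| = k(n-k)+1$, which is the common size of all maximal noncrossing collections in $\binom{[n]}{k}$.

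The converse direction — realising an arbitrary maximal noncrossing collection as $\mathbb I(D)$ for some reduced $D$ — is the main obstacle. I would approach this via mutation and induction. Both the set of reduced $(k,n)$-Postnikov diagrams modulo equivalence and the set of maximal noncrossing collections in $\binom{[n]}{k}$ carry compatible notions of mutation: geometric exchange on the geometric side and the square move (swapping a label $ac$ for a label $bd$ inside a four-term pattern) on the combinatorial side. I would first produce a base case by exhibiting one distinguished reduced Postnikov diagram realising, say, the ``staircase'' maximal noncrossing collection, and check by a direct local computation that each geometric exchange in $D$ corresponds to a square move in $\mathbb I(D)$. The surjectivity of $D\mapsto \mathbb I(D)$ then reduces to two transitivity statements: transitivity of geometric exchange on reduced Postnikov diagrams, proved in \cite{Pos06}, and transitivity of square moves on maximal noncrossing collections, which is the deep combinatorial content of \cite{OPS15}. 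The latter is the real technical obstacle, and its proof in \cite{OPS15} proceeds through a careful analysis of wiring diagrams and an induction on a suitable complexity measure that I would not attempt to reprove here.
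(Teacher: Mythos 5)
First, a point of comparison: the paper itself does not prove this statement at all --- it is imported wholesale as \cite[Theorem 11.1]{OPS15}, so your proposal has to be measured against the argument in \cite{OPS15}. There the hard direction is proved by a \emph{direct construction}: from a maximal noncrossing (weakly separated) collection $\mathbb I$ one builds the plabic tiling $\Sigma(\mathbb I)$, whose vertex set is $\mathbb I$ embedded in the plane via $I\mapsto \sum_{a\in I}v_a$, proves that it tiles the polygon, and obtains the strands as zig-zag paths --- exactly the construction this paper invokes in the proof of Lemma \ref{lem:I}. Your proposal replaces this by a mutation argument, and that is where the genuine gap lies: you reduce surjectivity of $D\mapsto\mathbb I(D)$ to transitivity of square moves on maximal noncrossing collections and cite \cite{OPS15} for it. But in \cite{OPS15} that transitivity is a \emph{corollary} of the bijection between maximal weakly separated collections and reduced plabic graphs (combined with Postnikov's transitivity of geometric exchange, Theorem \ref{thm:transitive}); it is not established independently of the theorem you are trying to prove. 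As written, your converse direction is therefore circular --- the one step you declare too technical to reprove is precisely the content of the statement. The same caveat applies, more mildly, to your maximality argument in the forward direction: concluding maximality from $|\mathbb I(D)| = k(n-k)+1$ requires the purity theorem (all maximal noncrossing collections have this size), which is \cite[Theorem 4.7]{OPS15} and is itself proved there using the plabic tiling machinery.

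There is also a concrete local error in the forward direction. Two alternating regions never share a strand segment: around any crossing the four regions alternate in type, so a strand segment always separates an alternating region from an oriented one, and alternating regions meet each other only at crossing points, where their labels differ by an exchange $I\setminus\{i\}\cup\{j\}$ (preserving cardinality $k$), not by adding or removing a single index. The standard fix, used by Scott and in \cite{OPS15}, is to label \emph{all} regions, allowing label sets of varying sizes, so that crossing strand $i$ adds or removes $i$; with that correction your path-tracking argument for weak separation of the labels is the standard one and can be completed. In summary: the forward direction is repairable modulo purity, but the converse as proposed contains no actual proof, and the honest route is the explicit $\Sigma(\mathbb I)$ construction rather than mutation transitivity.
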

Such collections are known to have $k(n-k)+1$ elements (this was conjectured in \cite{Sco06} and proved in \cite[Theorem 4.7]{OPS15}). 
There is an explicit construction of a Postnikov diagram having a prescribed maximal noncrossing collection as labels (\cite[\S 9]{OPS15}),
and this turns out to be unique (up to equivalence).
So the datum of a Postnikov diagram $D$ is equivalent to the datum of a maximal noncrossing collection $\mathbb I$. 
\begin{rmk}\label{rmk:labels}
  The label of the alternating region adjacent to the boundary segment of the disk from $i$ to $i+1$ is the set $\left\{ i-k+1, \dots, i \right\}$ for $i\in [n]$.
\end{rmk}

For simplicity, we assume from now on that Postnikov diagrams are reduced.
To a Postnikov diagram $D$ we can associate (see \cite[\S 3]{BKM16}) an ice quiver with potential $(Q, W, F)= (Q, W, F)(D)$ such that:
\begin{enumerate}
  \item Vertices of $Q$ are elements of $\mathbb I(D)$.
  \item Arrows of $Q$ correspond to intersection points of alternating regions, with orientation so that the arrows ``point in the same direction as the strands'', as illustrated in Figure \ref{fig:quiv39}.
  \item The potential $W$ is given by the sum of cycles corresponding to the clockwise regions minus the sum of the cycles corresponding to the counterclockwise regions.
  \item The frozen vertices are the boundary vertices, i.e.~the vertices corresponding to the boundary segments of the disk.
\end{enumerate}
Notice that there is a natural embedding of $Q$ in the disk. The assumption that $D$ is reduced implies that there are no 2-cycles in $Q$ (which we require in our definition of ice quivers with potential).
\begin{figure}[h]
\[
\begin{tikzpicture}[baseline=(bb.base),
quivarrow/.style={black, -latex, very thick}]

\newcommand{\goodarrow}{\arrow{angle 60}}
\newcommand{\bstart}{130} 
\newcommand{\ninth}{40} 
\newcommand{\qstart}{150} 
\newcommand{\radius}{4.8cm} 
\newcommand{\eps}{11pt} 
\newcommand{\dotrad}{0.07cm} 

\path (0,0) node (bb) {};


\draw (0,0) circle(\radius) [thick,dashed];

\foreach \n in {1,...,9}
{ \coordinate (b\n) at (\bstart-\ninth*\n:\radius);
  \draw (\bstart-\ninth*\n:\radius+\eps) node {$\n$}; }
  

\foreach \n in {1,2,3,4,5,6,7,8,9} {\draw (b\n) circle(\dotrad) [fill=black];}

\draw  plot[smooth]
coordinates {(b9) (b3)}
[ postaction=decorate, decoration={markings,
  mark= at position 0.2 with \goodarrow, mark= at position 0.3 with \goodarrow,
mark= at position 0.4 with \goodarrow,mark= at position 0.5 with \goodarrow, mark= at position 0.8 with \goodarrow}];
\draw  plot[smooth]
coordinates {(b3) (b6)}
[ postaction=decorate, decoration={markings,
  mark= at position 0.2 with \goodarrow, mark= at position 0.3 with \goodarrow,
mark= at position 0.4 with \goodarrow,mark= at position 0.5 with \goodarrow, mark= at position 0.8 with \goodarrow}];
\draw  plot[smooth]
coordinates {(b6) (b9)}
[ postaction=decorate, decoration={markings,
  mark= at position 0.2 with \goodarrow, mark= at position 0.3 with \goodarrow,
mark= at position 0.4 with \goodarrow,mark= at position 0.5 with \goodarrow, mark= at position 0.8 with \goodarrow}];
\draw  plot[smooth] coordinates {(b7)(230:\radius*25/40) (-30:.25*\radius) (70:\radius*25/40) (b1)}
[ postaction=decorate, decoration={markings,
  mark= at position 0.1 with \goodarrow, mark= at position 0.29 with \goodarrow,
  mark= at position 0.42 with \goodarrow, mark= at position 0.5 with \goodarrow,
  mark= at position 0.6 with \goodarrow, mark= at position 0.69 with \goodarrow,
  mark= at position 0.79 with \goodarrow, mark= at position 0.85 with \goodarrow,
  mark= at position 0.92 with \goodarrow }];
\draw  plot[smooth] coordinates {(b1) (110:\radius*25/40)(-150:\radius*.25) (-50:\radius*25/40)(b4)}
[ postaction=decorate, decoration={markings,
  mark= at position 0.1 with \goodarrow, mark= at position 0.29 with \goodarrow,
  mark= at position 0.42 with \goodarrow, mark= at position 0.5 with \goodarrow,
  mark= at position 0.6 with \goodarrow, mark= at position 0.69 with \goodarrow,
  mark= at position 0.79 with \goodarrow, mark= at position 0.85 with \goodarrow,
  mark= at position 0.92 with \goodarrow }];
\draw  plot[smooth] coordinates {(b4)(-10:\radius*25/40) (90:\radius*.25) (190:\radius*25/40)(b7)}
[ postaction=decorate, decoration={markings,
  mark= at position 0.1 with \goodarrow, mark= at position 0.29 with \goodarrow,
  mark= at position 0.42 with \goodarrow, mark= at position 0.5 with \goodarrow,
  mark= at position 0.6 with \goodarrow, mark= at position 0.69 with \goodarrow,
  mark= at position 0.79 with \goodarrow, mark= at position 0.85 with \goodarrow,
  mark= at position 0.92 with \goodarrow }];
\draw  plot[smooth] coordinates {(b2) (80:\radius*23/40)(30:\radius*7/40) (-25:\radius*26/40)(b5)}
[ postaction=decorate, decoration={markings,
  mark= at position 0.14 with \goodarrow, mark= at position 0.25 with \goodarrow,
  mark= at position 0.35 with \goodarrow, mark= at position 0.47 with \goodarrow,
  mark= at position 0.6 with \goodarrow, mark= at position 0.72 with \goodarrow,
 mark= at position 0.86 with \goodarrow}];
 \draw  plot[smooth] coordinates {(b5) (-40:\radius*23/40)(-90:\radius*7/40) (-145:\radius*26/40)(b8)}
 [ postaction=decorate, decoration={markings,
  mark= at position 0.14 with \goodarrow, mark= at position 0.25 with \goodarrow,
  mark= at position 0.35 with \goodarrow, mark= at position 0.47 with \goodarrow,
  mark= at position 0.6 with \goodarrow, mark= at position 0.72 with \goodarrow,
 mark= at position 0.86 with \goodarrow}];
 \draw  plot[smooth] coordinates {(b8) (200:\radius*23/40)(150:\radius*7/40) (95:\radius*26/40)(b2)}
 [ postaction=decorate, decoration={markings,
  mark= at position 0.14 with \goodarrow, mark= at position 0.25 with \goodarrow,
  mark= at position 0.35 with \goodarrow, mark= at position 0.47 with \goodarrow,
  mark= at position 0.6 with \goodarrow, mark= at position 0.72 with \goodarrow,
 mark= at position 0.86 with \goodarrow}];


\foreach \n/\m/\r in {1/789/0.88, 2/891/0.88, 3/912/0.88, 4/123/0.85, 5/234/0.89, 6/345/0.85, 7/456/0.85, 8/567/.89, 9/678/.87}
{ \draw (\qstart-\ninth*\n:\r*\radius) node (q\m) {$\m$}; }

\foreach \m/\a/\r in {179/88/0.6 , 134/328/0.6, 467/208/0.6, 178/117/0.4, 124/-3/.4, 457/237/.4 , 147/0/0, 127/65/.38, 145/-55/.38, 478/185/.38}
{ \draw (\a:\r*\radius) node (q\m) {$\m$}; }


\foreach \t/\h in {456/457, 457/145, 145/147, 134/145, 124/134,  134/234, 457/467, 467/567, 145/345,345/134, 123/124, 147/124, 127/147, 147/178, 
478/147, 147/457, 124/127, 179/127, 178/179, 178/478, 467/478, 127/912, 912/179, 179/891, 789/178, 478/678, 678/467, 234/123, 234/345, 567/456, 567/678, 891/912, 891/789, 345/456, 678/789, 912/123}
{ \draw [quivarrow] (q\t) edge (q\h);}

 \end{tikzpicture}
\]
\caption{The quiver associated to the Postnikov diagram in Figure \ref{fig:post39}.}
\label{fig:quiv39}
\end{figure}
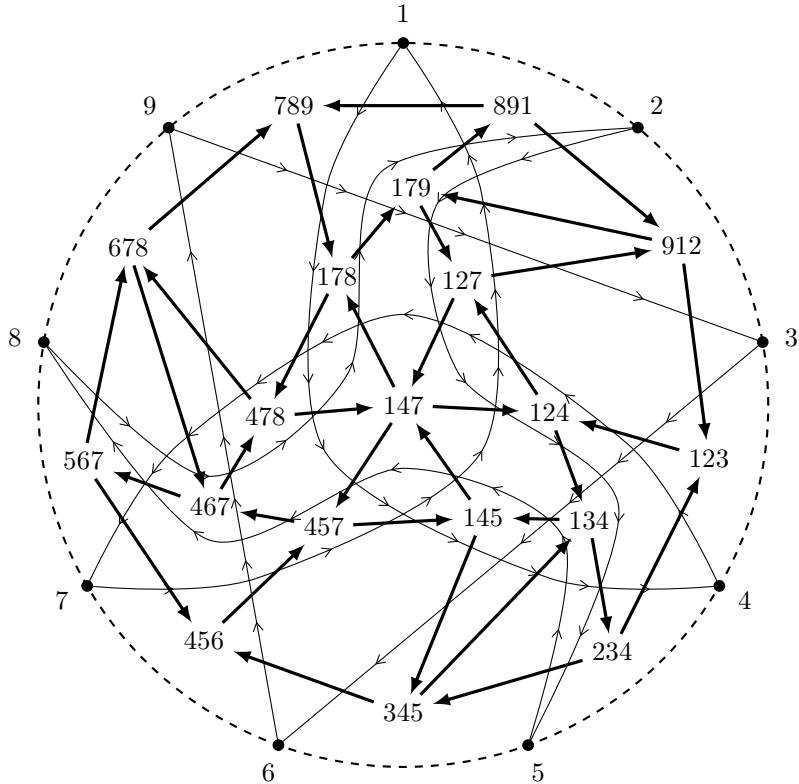

Thus we define the frozen Jacobian algebra $A = A(D) = \wp(Q, W, F)$ (this is the dimer algebra $A$ defined in \cite{BKM16}).
It is proved in \cite[Lemma 12.1]{BKM16} that the algebra $A$ is invariant up to isomorphism under equivalence of Postnikov diagrams.
Call $e$ the idempotent of $A$ given by the sum of the idempotents corresponding to the frozen vertices of $Q$. 
Then $eAe\subseteq A$ is an idempotent subalgebra isomorphic (see Section \ref{sec:tilting}) to the opposite of the algebra $B$ we discuss in Section \ref{sec:B}.
The algebra $eAe$ is the boundary algebra studied in \cite{BKM16}, and the algebra $B$ was introduced in \cite{JKS16}. We are especially interested in studying the algebra $\Lambda = A/AeA$. 
The latter is the Jacobian algebra $\wp(\underline Q, \underline W)$, where $\underline Q$ is the quiver obtained from $Q$ by removing the frozen vertices and the adjacent arrows, and $\underline W$ is the 
image of $W$ under the corresponding quotient map $\C Q\to \C\underline Q$ (see Section \ref{sec:jacobian}). 

We are interested in the case where the Postnikov diagram $D$ is symmetric under a rotation in the plane around the center of the disk. In particular, we consider invariance under $\rho$, the clockwise rotation by $2\pi k/n$. Since this notion is not invariant under isotopy, we call a Postnikov diagram \emph{symmetric} if it is equivalent to one which is invariant under $\rho$.
Another way of thinking about a symmetric Postnikov diagram is saying that it is equal (or isotopic) to the Postnikov
diagram obtained by changing the labels of the points on the disk, replacing every $i$ with $i+k$.
In this case we have
\begin{lemma}
  \label{lem:I}
   Let $\mathbb I$ be a maximal noncrossing collection in $\binom{[n]}{k}$. Then $\mathbb I = \mathbb I+k$ if and only if there exists a symmetric Postnikov diagram $D$ with $\mathbb I = \mathbb I(D)$.
\end{lemma}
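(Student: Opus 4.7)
The plan is to use the bijection between maximal noncrossing collections and reduced Postnikov diagrams (up to equivalence) established by Oh--Postnikov--Speyer, namely Theorem~\ref{thm:classnoncross} together with the uniqueness statement mentioned just after it. The essential geometric input is that the rotation $\rho$ by $2\pi k/n$ acts on the disk by sending the boundary point labelled $i$ to the one labelled $i+k$; consequently $\rho$ transforms a Postnikov diagram into another Postnikov diagram, carrying strand $i$ to strand $i+k$, and sending an alternating region with label $I$ to an alternating region with label $I+k$, since the label records which strands have the region on their left.

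For the ``if'' direction, assume $D$ is symmetric, so it is equivalent to some $D'$ satisfying $\rho(D') = D'$. Equivalent Postnikov diagrams have the same set of alternating region labels, so $\mathbb I(D) = \mathbb I(D')$. The map $\rho$ permutes the alternating regions of $D'$, and on labels this permutation is exactly $I \mapsto I+k$. Hence $\mathbb I(D') = \mathbb I(D')+k$, and therefore $\mathbb I = \mathbb I+k$.

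For the ``only if'' direction, take any reduced Postnikov diagram $D_0$ with $\mathbb I(D_0) = \mathbb I$. Applying $\rho$ gives another reduced Postnikov diagram $\rho(D_0)$ whose label set is $\mathbb I + k = \mathbb I$. By the uniqueness part of Theorem~\ref{thm:classnoncross}, $\rho(D_0)$ is equivalent to $D_0$. To promote this to the existence of an actually $\rho$-invariant representative, I would appeal to the explicit construction in \cite[\S9]{OPS15} that produces, from any maximal noncrossing collection $\mathbb I$, a canonical diagram $D(\mathbb I)$. That construction depends only on the combinatorics of $\mathbb I$ and is manifestly equivariant with respect to the relabelling $i \mapsto i+k$, so when applied to $\mathbb I$ with $\mathbb I = \mathbb I+k$ it outputs a diagram which is literally invariant under $\rho$, and this diagram is equivalent to $D_0$ by uniqueness.

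The main obstacle is the rigidification step in the ``only if'' direction: knowing that an equivalence class of Postnikov diagrams is preserved by $\rho$ does not, in general, force the class to contain a genuinely $\rho$-invariant member (this is the usual issue of realising a formal fixed point as an actual fixed point within an orbit). The way around this is precisely to avoid manipulating sequences of twisting/untwisting moves equivariantly and instead to invoke the canonical construction of \cite{OPS15}, which takes $\mathbb I$ as input and produces the diagram in a way that automatically inherits any symmetry of $\mathbb I$.
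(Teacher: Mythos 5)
Your proof is correct and takes essentially the same route as the paper: the forward direction via the action of the rotation $\rho$ on strands and hence on labels, and the converse by invoking the construction of \cite[\S 9]{OPS15} applied to $\mathbb I$ itself, whose equivariance under $i\mapsto i+k$ yields a genuinely $\rho$-invariant diagram (thereby avoiding, exactly as the paper does, the fixed-point-in-an-orbit issue you correctly flag). The only detail the paper supplies that your word ``manifestly'' glosses over is why the construction is equivariant: the CW-complex $\Sigma(\mathbb I)$ is embedded by sending $I$ to $\sum_{a\in I}v_a$ for $v_1,\dots,v_n$ the vertices of a convex $n$-gon, and one must choose this $n$-gon regular and centred at the origin so that the relabelling acts by an actual rotation of the plane.
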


\begin{proof}
   If $D$ is symmetric, it follows that if strand $i$ crosses in order the strands $i_1, i_2, \dots, i_l$, then strand $i+k$ crosses in order the strands $i_1+k, i_2+k, \dots i_l+k$.  
  Thus $I$ is the label of a region of $D$ if and only if $I+k$ is.

  Conversely, assume that $\mathbb I = \mathbb I+k$.
  We refer to \cite[\S 9]{OPS15} for the construction of a Postnikov diagram $D$ with $\mathbb I(D) = \mathbb I$. The construction proceeds by defining a 2-dimensional CW-complex $\Sigma(\mathbb I)$ whose vertex set is $\mathbb I$,
  embedding it in the plane,
and constructing strands as zig-zag paths. It is enough to observe that the images of the vertex sets of $\Sigma(\mathbb I)$ and of $\Sigma(\mathbb I+k)$ are related by rotation in the plane. This is true since the map is as follows.
One takes $v_1, \dots, v_n$ to be the vertices of a convex $n$-gon in $\mathbb R^2$, and one maps $I$ to $\sum_{a\in I} v_a$. We can in particular choose the $n$-gon to be regular and centred at the origin, and then the claim follows.
\end{proof}

Notice that $D$ is symmetric if and only if $Q$ is invariant under $\rho$. Moreover, $\rho$ must in this case map (counter-)clockwise cycles in $Q$ to (counter-)clockwise cycles, so 
it maps $W$ to itself, and so induces an automorphism $\Psi$ of $A$. Since $\rho$ maps $F$ to $F$, this induces an automorphism of $\Lambda$ which we still denote by $\Psi$.

We will need the following definition in Section \ref{sec:tilting}.

\begin{defin}\label{defin:u}\cite{BKM16}.
 From the relations in the definition of $A$ it follows that for any vertex $I\in Q_0$, the cyclic paths appearing in the potential and starting at $I$ are equal to the same element in $A$. We denote 
this element by $u_I\in A$, and define
\begin{align*}
  u = \sum_{I\in Q_0} u_I\in A.
\end{align*}
\end{defin}
\begin{rmk}
  It is easy to see that $u\in Z(A)$.
\end{rmk}

\section{Cluster tilting in 2-Calabi-Yau categories}\label{sec:selfinj}
Let $\mathcal C$ be a $\mathbb C$-linear, $\on{Hom}$-finite triangulated category.
\begin{defin}
  The category $\mathcal C$ is \emph{2-Calabi-Yau} if there is a functorial isomorphism
  \begin{align*}
    \on{Hom}_{\mathcal C}(X, Y)\cong D\on{Hom}_{\mathcal C}(Y, X[2]).
  \end{align*}
  An object $T\in \mathcal C$ is \emph{cluster tilting} if
  \begin{align*}
    \on{add}T = \left\{X\in \mathcal C\ |\ \on{Hom}_{\mathcal C}(T, X[1]) = 0\right\}.
  \end{align*}
  We call a cluster tilting object $T$ \emph{self-injective} if $\on{End}_\mathcal C(T)$ is a finite-dimensional self-injective $\mathbb C$-algebra.
  For convenience, we assume cluster tilting objects to be basic.
\end{defin}

Let us recall some facts and fix some notation about self-injective algebras.
Let $\Lambda$ be a finite-dimensional algebra, and let us fix a maximal set $\{e_1, \ldots, e_l\}$ of orthogonal idempotents. Then $\mathcal P_i = \Lambda e_i$ is a projective indecomposable $\Lambda$-module, and $\mathcal I_i = D(e_i\Lambda)$ is an injective indecomposable $\Lambda$-module.
If $T = \bigoplus T_i$ is a basic $B$-module for some algebra $B$, with indecomposable summands $T_i$, and $\Lambda = \on{End}_B(T)$,
then we choose $\mathcal P_i  = \on{Hom}_B(T_i,T)$ and $\mathcal I_i = D\on{Hom}_B(T,T_i)$.

An algebra $\Lambda$ is self-injective if and only if there exists an automorphism $\psi:\Lambda\to \Lambda$ such that $\Lambda_\psi\cong D\Lambda$ as $\Lambda-\Lambda$-bimodules.
This is called a Nakayama automorphism of $\Lambda$, and is unique up to inner automorphisms. In this case, we have that 
\begin{align*}
  \mathcal P_i \cong \mathcal I_{\sigma(i)}
\end{align*}
as left $\Lambda$-modules for some permutation $\sigma$ (i.e.~$\Lambda e_i \cong D(e_{\sigma(i)}\Lambda)$), and 
\begin{align*}
  \Lambda_\psi\otimes_{\Lambda} \mathcal P_{\sigma(i)} \cong \mathcal P_i
\end{align*}
for the same $\sigma$. This permutation does not depend on the choice of $\psi$, and is called the Nakayama permutation.

Let us now fix a 2-Calabi-Yau category $\mathcal C$. The following characterisation of self-injective cluster tilting objects will be useful.

\begin{prop}\label{prop:selfinj}\cite[Proposition 3.6]{IO13},  \cite[Proposition 4.4]{HI11b}.
  Let $T = \bigoplus_{i = 1}^l T_i\in \mathcal C$ be a cluster tilting object, with indecomposable summands $T_i$. Then 
  \begin{enumerate}
    \item $T$ is self-injective if and only if $T\cong T[2]$.
    \item In this case, the permutation $\sigma$ defined by $T_{\sigma(i)}\cong T_{i}[2]$ is the Nakayama permutation of $\on{End}_{\mathcal C}(T)$.
  \end{enumerate}
\end{prop}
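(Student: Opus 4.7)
The plan is to study the shift $T \mapsto T[2]$ through the Yoneda functor $F = \on{Hom}_\mathcal{C}(T,-)\colon \mathcal C \to \on{mod}\Lambda$, whose restriction to $\on{add}T$ is an equivalence onto the subcategory of projective $\Lambda$-modules (sending $T_i$ to the indecomposable projective $\mathcal P_i$) and which is fully faithful out of $\on{add}T$. The one computational ingredient is
\begin{align*}
  F(T_i[2]) = \on{Hom}_\mathcal C(T, T_i[2]) \cong D\on{Hom}_\mathcal C(T_i[2], T[2]) = D\on{Hom}_\mathcal C(T_i, T) \cong \mathcal I_i,
\end{align*}
obtained from a single application of the 2-Calabi-Yau duality; here $\mathcal I_i$ is the indecomposable injective at vertex $i$, and I suppress the left/right bookkeeping of modules throughout.

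For the forward direction of (1) together with (2), an isomorphism $T \cong T[2]$ must permute indecomposable summands by Krull-Schmidt, so there exists a permutation $\sigma$ with $T_{\sigma(i)} \cong T_i[2]$. Applying $F$ gives $\mathcal P_{\sigma(i)} = F(T_{\sigma(i)}) \cong F(T_i[2]) \cong \mathcal I_i$, which is exactly the statement that $\Lambda$ is self-injective with Nakayama permutation $\sigma$.

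For the converse, suppose $\Lambda$ is self-injective with Nakayama permutation $\sigma$, so $\mathcal P_{\sigma(i)} \cong \mathcal I_i$ and hence $F(T_{\sigma(i)}) \cong F(T_i[2])$. By Yoneda this isomorphism equals $F(\alpha_i)$ for a unique morphism $\alpha_i \colon T_{\sigma(i)} \to T_i[2]$. I would complete $\alpha_i$ to a triangle
\begin{align*}
  T_{\sigma(i)} \xrightarrow{\alpha_i} T_i[2] \longrightarrow C_i \longrightarrow T_{\sigma(i)}[1]
\end{align*}
and show $C_i = 0$, making $\alpha_i$ an isomorphism; summing over $i$ would then yield $T[2] \cong T$ as desired.

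The main obstacle is upgrading the iso $F(\alpha_i)$ in $\on{mod}\Lambda$ to an iso in $\mathcal C$. Applying $F$ to the triangle and using $F(T_{\sigma(i)}[1]) = 0$ together with $F(\alpha_i)$ being iso forces $F(C_i) = 0$. The crucial auxiliary fact is that for a cluster tilting object $T$ in a 2-Calabi-Yau category one has $F(X) = 0$ iff $X \in \on{add}T[1]$: feeding $X$ into its $\on{add}T$-approximation triangle $T_1 \to T_0 \to X \to T_1[1]$ and applying $F$ yields a surjection $F(T_1) \twoheadrightarrow F(T_0)$ which splits since $F(T_0)$ is projective, forcing the original triangle to split and exhibit $X$ as a shift from $\on{add}T$. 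Hence $C_i = C'_i[1]$ with $C'_i \in \on{add}T$. Rotating to $C'_i \to T_{\sigma(i)} \xrightarrow{\alpha_i} T_i[2] \to C'_i[1]$, the map $F(C'_i) \to F(T_{\sigma(i)})$ is followed in the long exact sequence by the iso $F(\alpha_i)$, so it must be zero, whence $C'_i \to T_{\sigma(i)}$ is zero by Yoneda. The triangle therefore splits, giving $T_i[2] \cong T_{\sigma(i)} \oplus C'_i[1]$; indecomposability of $T_i[2]$ together with $T_{\sigma(i)} \ne 0$ forces $C'_i = 0$.
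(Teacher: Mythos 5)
Your proof is correct, but it is considerably more self-contained than the paper's: the paper proves only part (2), by precisely your opening Serre-duality computation read contravariantly, namely $\mathcal P_i = \on{Hom}_{\mathcal C}(T_i,T)\cong D\on{Hom}_{\mathcal C}(T,T_i[2])\cong D\on{Hom}_{\mathcal C}(T,T_{\sigma(i)}) = \mathcal I_{\sigma(i)}$, and for part (1) it simply cites \cite[Proposition 3.6]{IO13}. Your converse direction therefore reconstructs the cited result rather than matching anything in the paper. The two auxiliary facts you invoke --- that $F=\on{Hom}_{\mathcal C}(T,-)$ is fully faithful on morphisms with source in $\on{add}T$, and that $F(X)=0$ if and only if $X\in\on{add}T[1]$ --- are the standard ingredients relating $\mathcal C$ to $\on{mod}\on{End}_{\mathcal C}(T)$, and your cone-splitting argument (kill $C_i$ module-theoretically, identify it as $C_i'[1]$ with $C_i'\in\on{add}T$, rotate, use faithfulness on $\on{add}T$ to make the connecting map vanish, then use indecomposability of $T_i[2]$) is a clean way to upgrade the isomorphism $F(T_{\sigma(i)})\cong F(T_i[2])$ to an isomorphism in $\mathcal C$; this buys you independence from the reference at the cost of quietly using one more standard fact, namely that in your approximation triangle $T_1\to T_0\to X\to T_1[1]$ the cocone $T_1$ again lies in $\on{add}T$, which is exactly where the cluster-tilting hypothesis (maximality, via $\on{Hom}_{\mathcal C}(T,T_1[1])=0$ from the long exact sequence) enters and deserves a line. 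One genuine caveat: the left/right bookkeeping you suppress is precisely what decides between $\sigma$ and $\sigma^{-1}$. With the paper's conventions ($\mathcal P_i=\on{Hom}(T_i,T)$ as left modules, Nakayama permutation defined by $\mathcal P_i\cong\mathcal I_{\sigma(i)}$) the computation lands on $\sigma$ directly, whereas your covariant $F$ yields $\mathcal P_{\sigma(i)}\cong\mathcal I_i$, i.e.\ the inverse permutation on that side --- the familiar discrepancy between Nakayama permutations for left and right modules. So to get part (2) verbatim as stated you should either run the computation with $\on{Hom}_{\mathcal C}(-,T)$, as the paper does, or record the inversion explicitly; as written, your part (2) is off by exactly this convention.
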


\begin{proof}
  Part (1) is proved in \cite{IO13}. For part (2), observe
  \begin{align*}
    \mathcal P_i = \on{Hom}(T_{i}, T)&\cong D\on{Hom}(T, T_{i}[2])\cong D\on{Hom}(T, T_{\sigma(i)}) = \mathcal I_{\sigma(i)}.
  \end{align*}
\end{proof}

Now consider $\Lambda = \on{End}_\mathcal C(T)$ as in Proposition \ref{prop:selfinj}, and fix an isomorphism $\varphi: T\to T[2]$.
Then define an automorphism $\psi:\Lambda\to \Lambda$ by
\begin{align*}
  \psi(\lambda) = \varphi[-2]\circ\lambda[-2]\circ\left(\varphi[-2]\right)^{-1}.
\end{align*}

Then we have 

\begin{prop}
  \label{prop:naka}
  The map $\psi$ is a Nakayama automorphism of $\Lambda$.
\end{prop}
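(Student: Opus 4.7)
The plan is to exhibit a $\Lambda$-$\Lambda$-bimodule isomorphism $\Lambda_\psi \cong D\Lambda$, which by definition is what it means for $\psi$ to be a Nakayama automorphism. The construction assembles two ingredients: the 2-Calabi-Yau pairing on $\mathcal C$, and the chosen iso $\varphi:T\to T[2]$, whose role is to absorb the twist by $\psi$.

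\textbf{Step 1 (Serre pairing is a bimodule map).} The 2-Calabi-Yau property provides a bifunctorial isomorphism $\eta_{X,Y}:\on{Hom}_\mathcal C(X,Y)\to D\on{Hom}_\mathcal C(Y,X[2])$. Specialising to $X=Y=T$ and exploiting naturality in the $Y$-slot with respect to a morphism $a\in\Lambda=\on{End}_\mathcal C(T)$ shows that post-composition by $a$ on $\on{Hom}(T,T)$ (the left $\Lambda$-action) is intertwined by $\eta$ with the dual of pre-composition by $a$ on $\on{Hom}(T,T[2])$, which is precisely the left $\Lambda$-action on $D\on{Hom}(T,T[2])$. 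Naturality in the $X$-slot gives the analogous statement for the right action. Thus $\eta_{T,T}$ is a bimodule isomorphism $\Lambda\cong D\on{Hom}_\mathcal C(T,T[2])$, where $\on{Hom}_\mathcal C(T,T[2])$ carries the natural structure $(a,g,b)\mapsto a[2]\circ g\circ b$.

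\textbf{Step 2 (identification via $\varphi$).} Shifting the defining identity $\psi(\lambda)=\varphi[-2]\circ\lambda[-2]\circ(\varphi[-2])^{-1}$ by $[2]$ gives the key relation $a[2]\circ\varphi=\varphi\circ\psi^{-1}(a)$ for every $a\in\Lambda$. Consequently the linear iso $\varphi_*:\Lambda\to\on{Hom}_\mathcal C(T,T[2])$, $f\mapsto\varphi\circ f$, is right $\Lambda$-linear and satisfies
\[
a\cdot\varphi_*(f)=a[2]\circ\varphi\circ f=\varphi\circ\psi^{-1}(a)\circ f=\varphi_*\bigl(\psi^{-1}(a)\,f\bigr).
\]
Hence $\varphi_*$ realises $\on{Hom}_\mathcal C(T,T[2])\cong{}_{\psi^{-1}}\Lambda$ as bimodules, equivalently $\on{Hom}_\mathcal C(T,T[2])\cong\Lambda_\psi$ via the standard identification ${}_{\psi^{-1}}\Lambda\to\Lambda_\psi$, $x\mapsto\psi(x)$.

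\textbf{Step 3 (dualise and conclude).} Dualising Step 2 gives $D\on{Hom}_\mathcal C(T,T[2])\cong D(\Lambda_\psi)$, and a direct bookkeeping check of the bimodule structure of the dual shows $D(\Lambda_\psi)\cong (D\Lambda)_{\psi^{-1}}$. Combined with Step 1, $\Lambda\cong(D\Lambda)_{\psi^{-1}}$ as bimodules. Any such linear iso $g:\Lambda\to D\Lambda$ satisfies $g(ax)=a\,g(x)$ and $g(xa)=g(x)\,\psi^{-1}(a)$, and so the same $g$ viewed with source $\Lambda_\psi$ satisfies $g(x\cdot_\psi a)=g(x\psi(a))=g(x)\,\psi^{-1}(\psi(a))=g(x)\,a$; hence $g:\Lambda_\psi\xrightarrow{\ \cong\ }D\Lambda$ is a bimodule isomorphism, which is the desired conclusion.

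\textbf{Main obstacle.} The substance of each step is routine, but the bookkeeping is delicate: one must repeatedly decide whether a twist lands on the left or the right and whether it involves $\psi$ or $\psi^{-1}$, against a backdrop in which $D$ swaps sides and the Serre pairing is contravariant in its first argument. The single non-trivial input — beyond chasing twisted-bimodule identities — is that $\eta$ is genuinely a bimodule map, which requires the bifunctoriality of the Serre pairing rather than merely its existence.
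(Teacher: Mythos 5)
Your proof is correct and takes essentially the same route as the paper: both rest on the Serre duality isomorphism $\Lambda\cong D\on{Hom}_{\mathcal C}(T,T[2])$, which is a bimodule map by bifunctoriality, combined with composition by $\varphi$ via the shifted conjugation identity $\psi(\lambda)[2]\circ\varphi=\varphi\circ\lambda$. The paper simply telescopes your three steps into the single explicit map $m:a\mapsto a^*(\varphi\circ -)$ and verifies directly that it is a bimodule isomorphism $\Lambda_\psi\to D\Lambda$, so your twisted-bimodule bookkeeping (including $D(\Lambda_\psi)\cong(D\Lambda)_{\psi^{-1}}$, which indeed holds via $F\mapsto F\circ\psi$) is a sound repackaging of the same computation.
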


\begin{proof}
  First we define a left module morphism $\Lambda\to D\Lambda$. The Serre functor $[2]$ gives an isomorphism of bifunctors
  $$
  \on{Hom}_{\mathcal C}(-, ?)\cong D\on{Hom}_{\mathcal C}(?, -[2]).
  $$
  In our case this induces an isomorphism
  of vector spaces
  \begin{align*}
    \Lambda \to D\on{Hom}_\mathcal C(T, T[2])
  \end{align*}
  which we will denote by $a\mapsto a^*$ for $a\in \Lambda$.
Moreover, there is an isomorphism of vector spaces 
\begin{align*}
  D\on{Hom}_\mathcal C(T, T[2]) \to D\Lambda
\end{align*}
given by $F\mapsto F(\varphi\circ-)$.
Call $m:\Lambda\to D\Lambda $ the composition of these, i.e. $$m: a\mapsto a^*(\varphi\circ-)$$ for all $a\in \Lambda$.
Let us check that $m$ is a left module morphism. We have $(\lambda a)^* = a^*(-\circ \lambda)$ for $\lambda, a\in \Lambda$. So
\begin{align*}
  \lambda m(a) = a^*(\varphi\circ -\circ \lambda) = m(\lambda a).
\end{align*}
Let us prove that $m$ is a right module morphism $\Lambda_{\psi}\to D\Lambda$.
For $a, b\in \Lambda$, we have that $$m(ab) = (ab)^*(\varphi\circ -) =  a^*(b[2]\circ \varphi \circ -).$$
The right action of $\Lambda$ on $D\Lambda$ is given by $F\lambda = F(\lambda\circ-)$, so 
\begin{align*}
  (m(a))\lambda = a^*(\varphi\circ \lambda\circ-).
\end{align*}
On the other hand, 
\begin{align*}
  m(a*_\psi \lambda) &= m(a\circ\varphi[-2]\circ\lambda[-2]\circ\varphi^{-1}[-2]) = \\
  &= a^*( (\varphi[-2]\circ\lambda[-2]\circ\varphi^{-1}[-2])[2] \circ \varphi \circ-) = \\
  &= a^*(\varphi\circ \lambda \circ -) = (m(a))\lambda,
\end{align*}
which proves the claim.
\end{proof}

\section{The boundary algebra $B$}\label{sec:B}
In this section we discuss an algebra $B = B(k,n)$ which was introduced in \cite{JKS16} in order to categorify the cluster algebra structure of the coordinate ring of the Grassmannian $\on{Gr}_k(\C^n)$.
This algebra also plays a prominent role in \cite{BKM16}.

Let us consider a $\mathbb Z/n\mathbb Z$-grading on $\mathbb C[x,y]$ by $\on{deg}x = 1$ and $\on{deg}y = -1$.
Thus the element $x^k-y^{n-k}$ is homogeneous of degree $k$, and we can consider graded modules over $R= \mathbb C[x,y] /(x^k-y^{n-k})$. Denote degree shift on $
\on{mod}^{\Z/n\Z}R$ by $(1)$, and define $B$ to be the algebra
 $$B = \on{End}_R^{\mathbb Z/n\mathbb Z}\left(\bigoplus_{ i\in \mathbb Z/n\mathbb Z}R( i)\right).$$
 We can realise $B$ as a quiver algebra as follows. Consider the quiver with vertex set $[n]$, and arrows $x_i:i-1\to i$ and $y_i:i\to i-1$ for each $i\in [n]$. Call $x = \sum_i x_i$ and $y= \sum_i y_i$. Then $B$ is
 isomorphic to the quotient of the path algebra over $\mathbb C$ 
of this quiver by the ideal generated by the relations $xy=yx$ and $x^k = y^{n-k}$. Thus $B$ is a quotient of the preprojective algebra of type $\tilde A_{n-1}$ by the relation 
$x^k = y^{n-k}$.

We also need to introduce the completed algebra $\hat B$. This is the completion of $B$ with respect to the ideal $(x,y)$ \cite[Remarks 3.1, 3.2 and 3.4]{JKS16}.
Similarly, we write $\hat R = \C[ [x,y]]/\overline{(x^k- y^{n-k})}$.
The completion will turn out not to play an important role for us, due to Proposition \ref{prop:admissible}. 

The categories $\on{mod}B$ and $\on{mod}^{\mathbb Z/n\mathbb Z}R$ are equivalent, and similarly for $\hat B$ and $\hat R$.
We can consider the category $\on{CM}(B)$ of Cohen-Macaulay modules over $B$ and the category $\on{CM}^{\mathbb Z/n\mathbb Z}(R)$ of graded Cohen-Macaulay modules over $R$.
These also turn out to be equivalent (cf.~\cite[Corollary 3.7]{JKS16}), and again the same holds for the completed algebras.
The category $\on{CM}(\hat B)$ was studied in \cite{JKS16}, where the authors show that the Frobenius category $\on{Sub}Q_k$ used in \cite{GLS08} is a quotient of $\on{CM}(\hat B)$ by one indecomposable projective object.
In this way, many facts about $\on{CM}(\hat B)$ and its stable category $\underline{\on{CM}}(\hat B)$ can be deduced from what is known about $\on{Sub}Q_k$. In particular, we have
\begin{prop}
  The category $\underline{\on{CM}}(\hat B)$ is 2-Calabi-Yau.
\end{prop}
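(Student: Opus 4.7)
The plan is to piggyback on the known 2-Calabi-Yau property of the Frobenius category $\on{Sub}Q_k$ of Geiss-Leclerc-Schröer, using the comparison theorem of Jensen-King-Su cited just before the proposition. Concretely, I would proceed as follows.

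First I would recall precisely the statement from \cite{JKS16}: the Frobenius category $\on{Sub}Q_k$ is obtained from $\on{CM}(\hat B)$ by quotienting out a single indecomposable projective-injective object, say $P_0$. This means that $\on{Sub}Q_k$ is equivalent (as a Frobenius category) to the subcategory of $\on{CM}(\hat B)$ consisting of objects with no summand isomorphic to $P_0$, and that this identification preserves the exact structure and the class of projective-injectives (minus $P_0$).

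Next I would pass to stable categories. In the stable category $\underline{\on{CM}}(\hat B)$ the object $P_0$ becomes zero, so removing it as a summand does not change anything on the stable level. Formally, the inclusion $\on{Sub}Q_k \hookrightarrow \on{CM}(\hat B)$ (or its inverse equivalence through the JKS quotient functor) induces a triangle equivalence
\begin{align*}
\underline{\on{Sub}}\, Q_k \;\simeq\; \underline{\on{CM}}(\hat B),
\end{align*}
since both sides are the stable categories of Frobenius categories with the same nonzero projective-injectives and the same underlying additive category modulo $\on{add}(P_0)$. One has to check that this equivalence respects the suspension (defined via conflations with projective-injective middle term) and sends conflations to triangles, but this is automatic from the fact that the JKS comparison is an equivalence of Frobenius categories up to the projective summand $P_0$.

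Finally I would invoke the result of \cite{GLS08} that $\underline{\on{Sub}}\, Q_k$ is a 2-Calabi-Yau triangulated category, and transport the 2-CY functorial duality along the equivalence above. The main potential obstacle is the second step: one needs to be a little careful to confirm that the quotient in \cite{JKS16} is indeed by a projective-injective of the Frobenius structure (so that no information is lost when passing to the stable category) and that the induced functor is an exact equivalence, rather than merely an equivalence of additive categories. Once this compatibility is in place the proposition follows immediately.
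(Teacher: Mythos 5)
Your proposal is correct and is essentially identical to the paper's argument: the paper's proof consists of citing \cite[Corollary 4.6]{JKS16}, which supplies exactly the triangle equivalence $\underline{\on{Sub}}\,Q_k \simeq \underline{\on{CM}}(\hat B)$ you outline (including the compatibility of the Frobenius structures and the quotient by the single projective-injective that you rightly flag as the point to check), together with \cite[Proposition 3.4]{GLS08} for the 2-Calabi-Yau property of $\underline{\on{Sub}}\,Q_k$. No further work is needed beyond these two citations.
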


\begin{proof}
  This follows from \cite[Corollary 4.6]{JKS16} and \cite[Proposition 3.4]{GLS08}.
\end{proof}

Now we describe some additional properties of $B$, which are insensitive to the completion. 
We refer to \cite{JKS16} for a detailed discussion of the relationship between $B$ and $\hat B$. 

There is an automorphism $\Phi:B\to B$ given by mapping $e_i\mapsto e_{i+k}$, $x_i\mapsto x_{i+k}$ and $y_{i}\mapsto y_{i+k}$. The same function is also an automorphism $\Phi$ of $B^{opp}$. 

The center of $B$ is $Z = \mathbb C[t]\subseteq B$, where $t =xy$.
The algebra $B$ is finitely generated over $Z$, and the category $\on{CM}(B) $ consists exactly of the finitely generated $B$-modules that are free over $Z$. 
Such a module corresponds to a representation of the quiver of $B$ with at every vertex a free $Z$-module of the same rank (\cite[\S 3]{JKS16}).
Following \cite[Definition 3.5]{JKS16}, we say that a $B$-module has rank $d$ if it has rank $nd$ as a $Z$-module. Rank is additive over short exact sequences (cf.~\cite[\S3]{JKS16}), so in particular rank 1 modules are indecomposable.
\begin{defin}\cite[Definition 5.1]{JKS16}.
  For each $I\in \binom{[n]}{k}$, define the $B$-module $L_I$ of rank 1 by the following representation of the quiver:
  at every vertex $i$ we have a copy $U_i$ of $Z$, and 
  \begin{align*}
    x_i: U_{i-1}\to U_i \text{ acts as multiplication by }1 \text{ if } i\in I, \text{ and by }t \text{ else,}\\
    y_i: U_{i}\to U_{i-1}\text{ acts as multiplication by }t \text{ if } i\in I, \text{ and by }1 \text{ else.}
  \end{align*}
\end{defin}

Similarly, the center of $\hat B$ is $Z = \C[[t]]$, and all the above holds for $\hat B$. In particular, we will use the notation $Z$ for the center and $L_I$ for 
the modules defined above, both for $B$ and $\hat B$.

Such modules can be represented by lattice diagrams as in Figure \ref{fig:modex}, where the black dots on column $i$ represent the monomials $1, t, t^2, \dots$ in the corresponding $U_i$, the action of $x_i$ ($y_i$) 
is denoted by a rightward (leftward) arrow labelled $i$, and the edges of the figure are identified along the dotted lines.
The label $I$ can be read off from the arrows pointing to the right on the top profile of the diagram.

\begin{figure}[h]
\[
\begin{tikzpicture}[baseline=(bb.base),scale = 0.7,
quivarrow/.style={black, -latex,  very thick}]
\newcommand{\dotrad}{0.07cm} 
\path (0,0) node (bb) {}; 


\draw (0,0) circle(\dotrad) [fill=black];
\draw (1,1) circle(\dotrad) [fill=black];
\draw (2,0) circle(\dotrad) [fill=black];
\draw (2,2) circle(\dotrad) [fill=black];
\draw (3,1) circle(\dotrad) [fill=black];
\draw (3,3) circle(\dotrad) [fill=black];
\draw (4,0) circle(\dotrad) [fill=black];
\draw (4,2) circle(\dotrad) [fill=black];
\draw (5,1) circle(\dotrad) [fill=black];
\draw (6,0) circle(\dotrad) [fill=black];
\draw (6,2) circle(\dotrad) [fill=black];
\draw (7,1) circle(\dotrad) [fill=black];
\draw (8,2) circle(\dotrad) [fill=black];
\draw (8,0) circle(\dotrad) [fill=black];
\draw (9,3) circle(\dotrad) [fill=black];
\draw (9,1) circle(\dotrad) [fill=black];

\draw (0,4) node {$0$};
\draw (1,4) node {$1$};
\draw (2,4) node {$2$};
\draw (3,4) node {$3$};
\draw (4,4) node {$4$};
\draw (5,4) node {$5$};
\draw (6,4) node {$6$};
\draw (7,4) node {$7$};
\draw (8,4) node {$8$};
\draw (9,4) node {$9$};


\draw [quivarrow,shorten <=5pt, shorten >=5pt] (1,1) -- node[above]{$1$} (0,0);
\draw [quivarrow,shorten <=5pt, shorten >=5pt] (2,2) -- node[above]{$2$} (1,1);
\draw [quivarrow,shorten <=5pt, shorten >=5pt] (1,1) -- node[above]{$2$} (2,0);
\draw [quivarrow,shorten <=5pt, shorten >=5pt] (2,2) -- node[above]{$3$} (3,1);
\draw [quivarrow,shorten <=5pt, shorten >=5pt] (3,1) -- node[above]{$3$} (2,0);
\draw [quivarrow,shorten <=5pt, shorten >=5pt] (4,2) -- node[above]{$4$} (3,1);
\draw [quivarrow,shorten <=5pt, shorten >=5pt] (3,1) -- node[above]{$4$} (4,0);
\draw [quivarrow,shorten <=5pt, shorten >=5pt] (4,2) -- node[above]{\textcircled{\raisebox{-0.9pt}{5}}} (5,1);
\draw [quivarrow,shorten <=5pt, shorten >=5pt] (5,1) -- node[above]{$5$} (4,0);
\draw [quivarrow,shorten <=5pt, shorten >=5pt] (5,1) -- node[above]{$6$} (6,0);
\draw [quivarrow,shorten <=5pt, shorten >=5pt] (7,1) -- node[above]{$7$} (6,0);
\draw [quivarrow,shorten <=5pt, shorten >=5pt] (8,2) -- node[above]{$8$} (7,1);
\draw [quivarrow,shorten <=5pt, shorten >=5pt] (9,3) -- node[above]{$9$} (8,2);
\draw [quivarrow,shorten <=5pt, shorten >=5pt] (8,2) -- node[above]{$9$} (9,1);
\draw [quivarrow,shorten <=5pt, shorten >=5pt] (7,1) -- node[above]{$8$} (8,0);
\draw [quivarrow,shorten <=5pt, shorten >=5pt] (9,1) -- node[above]{$9$} (8,0);
\draw [quivarrow,shorten <=5pt, shorten >=5pt] (3,3) -- node[above]{$3$} (2,2);
\draw [quivarrow,shorten <=5pt, shorten >=5pt] (3,3) -- node[above]{\textcircled{\raisebox{-0.9pt}{4}}} (4,2);
\draw [quivarrow,shorten <=5pt, shorten >=5pt] (6,2) -- node[above]{$6$} (5,1);
\draw [quivarrow,shorten <=5pt, shorten >=5pt] (6,2) -- node[above]{\textcircled{\raisebox{-0.9pt}{7}}} (7,1);

\draw [densely dotted] (0,-2) -- (0,0);
\draw [densely dotted] (9,-2) -- (9,3);

\draw [dotted, very thick] (3,-2) -- (3,-1);
\draw [dotted, very thick] (6,-2) -- (6,-1);

\end{tikzpicture}
\]
\caption{The module $L_{457}$ in the case $k=3$, $n=9$.}
\label{fig:modex}
\end{figure}
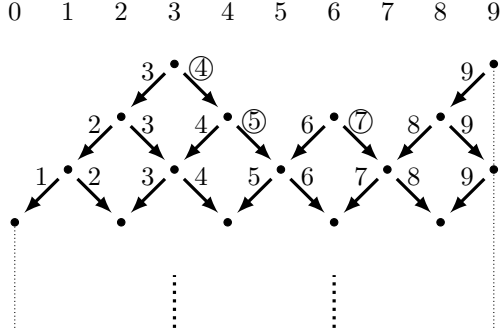
Every rank 1 module in $\on{CM}(B)$ (or $\on{CM}(\hat B)$) is of this form for some (unique) set $I\in \binom{[n]}{k}$ (\cite[Proposition 5.2]{JKS16}).

\begin{defin}
  \label{def:twist}
  From the definition of the modules $L_I$, it is clear that the effect of twisting by $\Phi$ is the same as relabelling the columns of the lattice diagram. 
  In other words, we have that $L_I \cong \,_\Phi L_{I+k}$ in a canonical way.
  We will denote by $\varphi_I:L_I\to \,_{\Phi}L_{I+k}$ this canonical isomorphism (given by identifying lattice diagrams).
\end{defin}

\begin{rmk}\label{rmk:projinj}
  There are some important differences between $\on{CM}(B)$ and $\on{CM}(\hat B)$.
  In both categories, we have for every $i\in [n]$ that $\mathcal P_i \cong \mathcal I_{i+k}\cong L_{i+1, \dots, i+k}$, cf.~\cite[Remark 7.2]{BKM16}.
  These are the only indecomposable projective objects in $\on{CM}(\hat B)$. Observe that we do not know whether this holds in $\on{CM}(B)$, cf.~\cite[paragraph after Remark 3.4]{JKS16}.
\end{rmk}

We need some notation about morphisms between the modules $L_I$ (cf.~\cite[Lemma 7.4]{BKM16}). The spaces $\on{Hom}_B(L_I, L_J)$ 
(respectively $\on{Hom}_{\hat B}(L_I, L_J)$) are free modules of rank 1 over $\C[t]$ (respectively
$\C[[t]]$), generated by 
the morphism $g_{JI}:L_I\to L_J$ corresponding to an embedding of lattice diagrams such that $\on{dim}_\C\on{coker}(g_{JI})$ is minimal. 
The map $g_{JI} t^N$ corresponds to the embedding where the diagram of $L_I$ is shifted downwards $N$ steps.

There is a functor $\mathcal F$ on $\on{CM}(B)$ given by $M\mapsto \,_\Phi M$ on objects and $f\mapsto f$ on morphisms.
We have that $\mathcal F(L_I) \cong L_{I-k}$ via the canonical isomorphism $\varphi_{I-k}$. It is clear from the definition of the morphisms $g_{JI}$ 
that $\mathcal F(g_{JI}) = g_{JI }= \varphi_J^{-1}\circ g_{J-k,I-k}\circ \varphi_I$. Notice that $\mathcal F$ is the identity on morphisms, but the map $g_{JI}$ changes name since we have relabelled 
the basis elements of both domain and codomain. We will sometimes treat the canonical isomorphisms as identifications and write $\mathcal F (g_{JI}) = g_{J-k,I-k}$.
Observe that via the equivalence $\on{CM}(B)\to \on{CM}^{\Z/n\Z}(R)$, the functor $\mathcal F$ is mapped to degree shift by $-k$ (cf.~\cite[Proposition 3.15]{BBE18}).
Again, we define a functor on $\on{CM}(\hat B)$ in the same way, and call it $\mathcal F$ as well.

\begin{defin}
  We denote by $\mathcal B$ the full additive subcategory of $\underline{\on{CM}}(\hat B)$ generated by $\left\{ L_I\ |\ I\in \binom{[n]}{k} \right\}$.
\end{defin}

Even though $\underline{\on{CM}}(\hat B)$ is triangulated, we remark that 
$\mathcal B$ is not a triangulated subcategory of $\underline{\on{CM}}(\hat B)$. However, we can explicitly describe the Serre functor $[2]$ of $\underline{\on{CM}}(\hat B)$ (see also \cite[Proposition 3.15]{BBE18}).
It turns out that inside $\underline{\on{CM}}^{\Z/n\Z}(R)$ there is an isomorphism of functors
$[2]\cong (-k)$, which in our setting translates into the following:

\begin{thm}[{\cite[Theorem 3.22]{DL16}}]\label{thm:shift}
  There is an isomorphism of functors $\mathcal F\cong [2]$ on $\underline{\on{CM}}(\hat B)$.
\end{thm}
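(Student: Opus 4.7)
The plan is to move the problem to the equivalent graded category and there apply the known formula for the Serre functor on the stable category of graded Cohen--Macaulay modules over a graded isolated hypersurface singularity. First I would use the equivalence $\on{CM}(\hat B)\simeq \on{CM}^{\Z/n\Z}(\hat R)$ recalled earlier in this section, which descends to the stable categories. Under it, the functor $\mathcal F$ corresponds to the grading shift $(-k)$, as the excerpt already notes. So the statement is equivalent to showing that $[2]\cong (-k)$ on $\underline{\on{CM}}^{\Z/n\Z}(\hat R)$.

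Next, observe that $\hat R=\C[[x,y]]/(x^{k}-y^{n-k})$ is a graded isolated hypersurface singularity, with $x^{k}-y^{n-k}$ homogeneous of degree $k$ modulo $n$. For such rings Eisenbud's theorem identifies the graded stable category with the homotopy category of $\Z/n\Z$-graded matrix factorisations, under which the shift $[1]$ has an explicit cosyzygy description; squaring it gives a concrete presentation of $[2]$. On the other hand, $\hat R$ is Gorenstein of Krull dimension $1$, and its canonical module is $\hat R(-a)$ with $a\equiv \deg(x^{k}-y^{n-k})-\deg x-\deg y\equiv k\pmod n$. The general formula expressing the Serre functor on the stable category of a graded Gorenstein isolated singularity as a composition of the shift with the grading shift by the Gorenstein parameter then specialises to $(-k)$ in our case. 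This is precisely the computation carried out in [DL16, Theorem 3.22] for exactly this family of hypersurfaces, so the argument ultimately reduces to checking that the hypotheses of that theorem apply here (which they do, $\hat R$ being a one-dimensional graded isolated hypersurface).

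The main obstacle is bookkeeping of conventions rather than substance: different references use different sign conventions for the grading shift, the Eisenbud shift, and the direction of the Serre functor, so some care is needed to make sure that the formulas combine to give exactly $(-k)$ and not, say, $(k)$ or $(-k)[1]$. Once these conventions are pinned down, the comparison is formal, and the $2$-Calabi--Yau property of $\underline{\on{CM}}(\hat B)$ established earlier guarantees that $[2]$ is a Serre functor, making the identification with $\mathcal F$ meaningful and the isomorphism canonical.
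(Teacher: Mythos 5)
Your proposal is correct in outline, but it takes a different route from the proof the paper actually writes out. What you describe---translating $\mathcal F$ into the degree shift $(-k)$ on $\underline{\on{CM}}^{\Z/n\Z}(\hat R)$ and then invoking graded hypersurface generalities (matrix-factorisation $2$-periodicity, equivalently the Serre-functor formula with Gorenstein parameter $a\equiv\deg f-\sum\deg x_i\equiv k$)---is essentially a re-derivation of the external result [DL16, Theorem 3.22] that the paper cites for the statement as given. The paper, however, does not stop at the citation: since it only needs the isomorphism on the additive subcategory $\mathcal B$ generated by the rank-one modules $L_I$, it gives a self-contained elementary proof there. Concretely, for each $I$ it splices the projective presentation of $L_I$ with the injective copresentation of $L_{I+k}$ into a four-term exact sequence $0\to L_{I+k}\to\bigoplus_{v\in V}L_{v+[k]}\to\bigoplus_{u\in U}L_{u+[k]}\to L_I\to 0$ with projective-injective middle terms, which shows $L_{I+k}[2]\cong L_I$ on objects, and then constructs explicit chain maps $(g_{I'+k,I+k},N,M,g_{I'I})$ between the sequences for $I$ and $I'$, with a four-case commutativity check, to conclude $g_{I'+k,I+k}[2]=g_{I'I}$; since the $g_{JI}$ generate all morphism spaces in $\mathcal B$, this gives the natural isomorphism. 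The trade-off is clear: your approach proves the statement on all of $\underline{\on{CM}}(\hat B)$ but leans on heavier machinery, and the one genuinely delicate point is exactly where you locate it---the sign/convention bookkeeping (note also that the standard Serre-functor formulas are usually stated for $\Z$-gradings, so the $\Z/n\Z$-graded case needs either the setting of [DL16] or a smash-product/covering argument, which is why ``checking the hypotheses'' is not entirely formal); the paper's computation is longer but stays within the lattice-diagram calculus and, crucially, pins down the isomorphism explicitly on the generators $g_{JI}$, which is what later allows the Nakayama automorphism in Theorem \ref{thm:main} and Corollary \ref{cor:main2} to be identified concretely with the rotation $\Psi$.
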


\begin{rmk}
	It follows in particular that $\mathcal B$ is closed under the Serre functor $[2]$.
\end{rmk}

\begin{rmk}
	The statement that $L_I\cong L_{I+k}[2]$ appears also in \cite[Proposition 2.7]{BB17}, and is implicitly used in \cite[\S7]{JKS16} in some specific cases.
	For our purposes, we will only need that $[2]\cong \mathcal F$ as functors on $\mathcal B$. We provide a direct proof of the latter fact.
\end{rmk}

\begin{proof}[Proof that {$\mathcal F\cong [2]$} on $\mathcal B$]
	Let us denote by $[k]$ the interval $\left\{ 1, 2, \dots, k \right\}\subseteq [n]$, and for $x\in [n]$ let us denote by $x+[k]$ the cyclic interval $\left\{ x+1, \dots, x+k \right\}\subseteq [n]$.
	
	Let us first prove that for every $I\in \binom {[n]}{k}$, there is an exact sequence in $\on{CM}(\hat B)$
	$$
	\xymatrix{
		0 \ar[r] & L_{I+k} \ar^-f[r] & \displaystyle{\bigoplus_{v\in V}} L_{v+[k]} \ar^\partial[r] & \displaystyle{\bigoplus_{u\in U}}L_{u+[k]} \ar[r]^-h & L_I\ar[r] & 0
	}
	$$
	where $U = \left\{ u\not \in I\ |\ u+1 \in I \right\}$ and $V= \left\{ v\in I\ |\ v+1 \not \in I \right\}$.
	The map $f$ is given by 
	\begin{align*}
	f = \begin{pmatrix}
	g_{v+[k], I+k}
	\end{pmatrix}_{v\in V},
	\end{align*}
	the map $h$ is given by
	\begin{align*}
	h = \begin{pmatrix}
	g_{I, u+[k]}
	\end{pmatrix}_{u\in U},
	\end{align*}
	and the map $\partial$ is given by
	\begin{align*}
	\partial = \begin{pmatrix}
	\partial_{uv}
	\end{pmatrix}_{u\in U, v\in V}
	\end{align*}
	with 
	\begin{align*}
	\partial_{uv} = 
	\begin{cases}
	g_{u+[k], v+[k]} &\text{ if $u$ is the predecessor of $v$ in the cyclic order on $U\cup V$};\\
	-g_{u+[k], v+[k]} &\text{ if $u$ is the successor of $v$ in the cyclic order on $U\cup V$};\\
	0 &\text{ otherwise}.
	\end{cases}
	\end{align*}
	In particular we mean that $\partial = 0$ if $L_I$ is projective.
	To prove the assertion that the above sequence is exact, observe that 
	$$
	\xymatrix{
		\displaystyle{\bigoplus_{v\in V}} L_{v+[k]} \ar^\partial[r] & \displaystyle{\bigoplus_{u\in U}}L_{u+[k]} \ar[r]^-h & L_I\ar[r] & 0
	}
	$$
	is a projective presentation of $L_I$ (cf.~\cite[Proposition 5.6]{JKS16}), and similarly
	$$
	\xymatrix{
		0 \ar[r] & L_{I+k} \ar^-f[r] & \displaystyle{\bigoplus_{v\in V}} L_{v+[k]} \ar^\partial[r] & \displaystyle{\bigoplus_{u\in U}}L_{u+[k]}
	}
	$$
	is an injective presentation of $L_{I+k}$.
	
	Now let us consider two $k$-element subsets $I, I'$, and the corresponding exact sequences. 
	We will construct a commutative diagram
	\[
	\xymatrix{
		0 \ar[r] & L_{I+k} \ar^-f[r] \ar[d]^{g_{I'+k,I'+k}}& \displaystyle{\bigoplus_{v\in V}} L_{v+[k]} \ar^\partial[r]\ar[d]^{N} & 
		\displaystyle{\bigoplus_{u\in U}}L_{u+[k]} \ar[r]^-h \ar[d]^{M}& L_I\ar[r]\ar[d]^{g_{I'I}} & 0\\
		0 \ar[r] & L_{I'+k} \ar^-{f'}[r] & \displaystyle{\bigoplus_{v'\in V'}} L_{v'+[k]} \ar^{\partial'}[r] & \displaystyle{\bigoplus_{u'\in U'}}L_{u'+[k]} \ar[r]^-{h'} & L_{I'}\ar[r] & 0.
	}
	\]
	
	By the definition of the triangulated structure on $\underline{\on{CM}}(\hat B)$, this means that  
	$g_{I'+k, I+k}[2] = g_{I'I} \cong \mathcal F (g_{I'+k, I+k})$. Since all morphism spaces in $\mathcal B$ are generated by maps of this form (in particular, recall that $g_{II} = \on{id}_{L_I}$), this will be enough to prove the assertion that $[2]\cong \mathcal F$ on $\mathcal B$.
	
	Let us fix some more notation to simplify the construction. We will drop the $+[k]$ in indices to avoid clogging the formulas. For instance, we will write $L_u$ for $L_{u+[k]}$ and similarly $g_{uI}$ for $g_{u+[k], I}$.
	For the cyclic orders on $U\cup V$ and on $U'\cup V'$, we write $\s$ and $\p$ for the successor and predecessor functions.
	
	Let us fix $u\in U$. We write $(L_{I'})_u$ for the $Z$-module of rank 1 corresponding to vertex $u$ inside $L_{I'}$. The generator of $(L_{I'})_u$ as a $Z$-module is in the image via $h'$ of either one or two of the $L_{u'}$. If there is only one such $L_{u'}$, then define $\uu(u)=u'$. If there are two such $L_{u'_1}$ and $L_{u'_2}$ (this happens if and only if $|U'|\geq 2$ and $u\in V'$), with $u'_1< u < u'_2$, then define $\uu(u)= u'_1$. 
	In other words, $\uu(u)$ is the unique element of $U'$ such that 
	$\p(\uu(u))< u \leq \s(\uu(u))$.
	By construction we have $g_{I'\uu(u)}\circ g_{\uu(u)u} = g_{I'u}$.
	We define $d(u)$ by the equation $g_{I'u}t^{d(u)} = g_{I'I}\circ g_{Iu}$.
	
	Dually, let us fix $v'\in V'$. The generator of $(L_{I+k})_{v'+k}$ as a $Z$-module is mapped via $f$ to a $Z$-module generator in either one or two of the $L_v$. If there is only one such $L_v$, then define $\vv(v') = v$. If there are two such $L_{v_1}$ and $L_{v_2}$ (this happens if and only if $|V|\geq 2$ and $v'\in U$), with $v_1<v'<v_2$, then define $\vv(v') = v_2$.
	In other words, $\vv(v')$ is the unique element of $V$ such that 
	$\p(\vv(v'))\leq v'< \s(\vv(v'))$. 
	By construction we have $g_{v'\vv(v')}\circ g_{\vv(v'), I+k} = g_{v', I+k}$.
	We define $d(v')$ by the equation $g_{v', I+k}t^{d(v')} = g_{v', I'+k}\circ g_{I'+k, I+k}$.
	
	Now we can define maps $M: \bigoplus_{u\in U}L_u \to \bigoplus_{u'\in U'} L_{u'}$ and $N:\bigoplus_{v\in V}L_v \to \bigoplus_{v'\in V'} L_{v'}$ by setting
	$$
	M_{u'u} = 
	\begin{cases}
	g_{u'u}t^{d(u)}, & \text{ if } u' = \uu(u);\\
	0, & \text{ otherwise}
	\end{cases}
	$$ 
	and 
	$$
	N_{v'v} = 
	\begin{cases}
	g_{v'v}t^{d(v')}, & \text{ if } v = \vv(v');\\
	0, & \text{ otherwise.}
	\end{cases}
	$$ 
	
	Let us check that the right square commutes, the left square being similar. 
	We have
	\begin{align*}
	(h'\circ M)_u &= 
	\sum_{u'\in U'} g_{I'u'}\circ M_{u'u} = g_{I'\uu(u)}\circ g_{\uu(u)u}t^{d(u)} = \\
	&= g_{I'u}t^{d(u)} = g_{I'I}\circ g_{Iu} = \\
	&= (g_{I'I}\circ h)_u. 
	\end{align*}
	Let us now consider the middle square. We have
	$$
	(M\circ \partial)_{u'v} = M_{u' \p(v)}\circ g_{\p(v)v} - 
	M_{u' \s(v)}\circ g_{\s(v)v}
	$$
	and 
	$$
	(\partial' \circ N)_{u'v} = g_{u'\s(u')}\circ N_{\s(u')v}- g_{u'\p(u')}\circ N_{\p(u')v}.$$
	
	There are four cases to consider. 
	
	\textbf{Case 1.} Let us first assume that $\uu(\p(v))= \uu(\s(v)) = u'$. In this case we have 
	\begin{align*}
	(M\circ \partial)_{u'v} &= 
	g_{u'\p(v)}\circ g_{\p(v)v}t^{d(\p(v))} - g_{u'\s(v)}\circ g_{\s(v)v}t^{d(\s(v))}.
	\end{align*}
	In particular, $(M\circ \partial)_{u'v} = (t^a-t^b)g_{u'v}$ for some $a, b\geq 0$. 
	From $h'\circ M\circ \partial = 0 $ we get then that $a = b$ and thus that $(M\circ \partial)_{u'v} = 0$. 
	Now since $\p(u')< \p(v)\leq \s(u')$ and $\p(u')< \s(v)\leq \s(u')$, we must have that either $\vv(\p(u')) = \vv(\s(u')) =v$ or $\vv(\p(u')) \neq v \neq \vv(\s(u'))$.
	In the first case, 
	$$(\partial' \circ N)_{u'v} = g_{u'\s(u')}\circ g_{\s(u')v}t^{d(\s(u'))}- g_{u'\p(u')}\circ g_{\p(u')v}t^{d(\p(u'))}$$
	and as above we can argue that this is 0. 
	In the second case, $(\partial' \circ N)_{u'v} = 0-0 $ directly.
	
	\textbf{Case 2.} In a similar way we can argue that $(M\circ \partial)_{u'v} =(\partial' \circ N)_{u'v} = 0$ whenever $\uu(\p(v))\neq u'\neq \uu(\s(v))$.
	
	\textbf{Case 3.} Let us now assume that $\uu(\p(v)) = u' \neq \uu(\s(v))$.
	This means that $\p(u')< \p(v)\leq \s(u')<\s(v)$, so we obtain that $\vv(\s(u')) = v\neq \vv(\p(u'))$. Thus 
	$$(M\circ \partial)_{u'v} = g_{u'\p(v)}\circ g_{\p(v)v}t^{d(\p(v))} $$
	and 
	$$(\partial' \circ N)_{u'v} = g_{u'\s(u')}\circ g_{\s(u')v}t^{d(\s(u'))}.$$
	
	We need to make some observations (cf.~\cite[Proposition~2.7]{BB17} for a pictorial interpretation). First, the maps $ g_{Iv}\circ g_{v,I+k}$ are all equal, and we can call them $\iota_I$. The maps $g_{Iu}\circ g_{u, I+k}$ are also all equal to $\iota_I$. Defining $\iota_{I'}$ in the same way, we remark that $g_{I'I}\circ \iota_I = \iota_{I'}\circ g_{I'+k, I+k}$. 
	
	With these observations, we can write
	\begin{align*}
	g_{I'u'}\circ (M\circ \partial)_{u'v} \circ	g_{v, I+k} &=
	g_{I'u'}\circ g_{u'\p(v)}\circ g_{\p(v)v}t^{d(\p(v))}\circ g_{v, I+k}=\\
	&= g_{I'I}\circ g_{Iv}\circ g_{v, I+k}= \\
	&= g_{I'I}\circ \iota_I=\\
	&= \iota_{I'}\circ g_{I'+k, I+k}=\\
	&= g_{I'u'}\circ g_{u', I'+k}\circ g_{I'+k, I+k}=\\
	&= g_{I'u'}\circ g_{u'\s(u')}\circ g_{\s(u')v}t^{d(\s(u'))}\circ g_{v, I+k}=\\
	&= g_{I'u'}\circ (\partial'\circ N)_{u'v}\circ 	g_{v, I+k}.
	\end{align*}
	Since both $(M\circ \partial)_{u'v}$ and $(\partial'\circ N)_{u'v}$ are equal to a power of $t$ times $g_{u'v}$, we conclude that they must be equal.
	
	\textbf{Case 4.} The case $\uu(\p(v)) \neq u' = \uu(\s(v))$ is similar to Case 3.
	We conclude that the middle square and thus the whole diagram commutes, and so we are done.
\end{proof}

\section{Cluster tilting in $\underline{\on{CM}}(\hat B)$}\label{sec:tilting}

There is a strong relationship between combinatorics of Postnikov diagrams and homological algebra in $\underline{\on{CM}}(\hat B)$.
We are interested in cluster tilting objects in the Frobenius category $\on{CM}(\hat B)$ and in the 2-Calabi-Yau category $\underline{\on{CM}}(\hat B)$, and these are the same objects.
To be more precise, there is a bijection between isomorphism classes of basic cluster tilting objects in $\on{CM}(\hat B)$ and in $\underline{\on{CM}}(\hat B)$, given by adding or removing the projective indecomposables.

The noncrossing property introduced in Section \ref{sec:post} corresponds to $\on{Ext}$-vanishing in $\on{CM}(\hat B)$.
\begin{prop}{\cite[Proposition 5.6]{JKS16}}.\label{prop:ext}
  Let $I, J\in\binom{[n]}{k}$. Then $\on{Ext}_B^1(L_I,L_J) = 0$ if and only if $I$ and $J$ are noncrossing.
\end{prop}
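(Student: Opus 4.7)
The plan is to compute $\on{Ext}^1_B(L_I,L_J)$ from the explicit four-term exact sequence
\[
0 \to L_{I+k} \xrightarrow{f} P_1 \xrightarrow{\partial} P_0 \xrightarrow{h} L_I \to 0
\]
constructed in the proof of Theorem~\ref{thm:shift}, where $P_0 = \bigoplus_{u\in U} L_{u+[k]}$ and $P_1 = \bigoplus_{v\in V} L_{v+[k]}$ are projective. Splitting this into two short exact sequences and applying $\on{Hom}_B(-,L_J)$ identifies $\on{Ext}^1_B(L_I,L_J)$ with the cohomology at the middle term of the three-term complex
\[
\on{Hom}(P_0, L_J) \xrightarrow{\partial^{*}} \on{Hom}(P_1, L_J) \xrightarrow{f^{*}} \on{Hom}(L_{I+k},L_J).
\]
Each morphism space here is a free $Z$-module of rank one generated by the explicit map $g_{\,\cdot\,\cdot}$, so the whole complex is encoded by a matrix of nonnegative integer shifts between lattice diagrams, and the computation reduces to a finite combinatorial check.

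The second step is to translate the noncrossing condition into a statement about these shifts. The lattice diagram of $L_I$ has local maxima indexed by $V$ (the positions where the top profile turns from ascending to descending) and local minima indexed by $U$, and similarly for $L_J$. A crossing of $I$ and $J$, i.e.~a cyclic quadruple $a,b,c,d$ with $a,c\in I\setminus J$ and $b,d\in J\setminus I$, corresponds exactly to an interleaving of a peak--valley pair of $L_I$ with a peak--valley pair of $L_J$ around the boundary circle. My expectation is that under the identifications above, each such interleaving produces an element of $\ker f^{*}\setminus \on{im}\partial^{*}$, and conversely.

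The main obstacle will be the bookkeeping needed to turn this geometric picture into an algebraic identity on the shifts appearing in the matrix coefficients. I would follow the pattern of the proof of Theorem~\ref{thm:shift}: introduce the successor/predecessor functions $\s,\p$ on $U\cup V$ together with the analogues for $J$, and the auxiliary functions $\uu,\vv$ recording which component of $P_0$ (resp.~$P_1$) is seen by a chosen generator of a $\on{Hom}$-space to $L_J$. A case split on the relative cyclic position of a peak of $L_I$ with respect to the peaks and valleys of $L_J$ then decomposes the middle cohomology into one-dimensional contributions indexed precisely by the crossing quadruples.

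As a consistency check, the 2-Calabi-Yau property of $\underline{\on{CM}}(\hat B)$ combined with Theorem~\ref{thm:shift} yields $\on{Ext}^1_B(L_I,L_J)\cong D\on{Ext}^1_B(L_J,L_I)$, so the answer must be symmetric in $I$ and $J$, matching the manifest symmetry of the noncrossing condition; this gives a useful sanity check on the combinatorial bookkeeping above and, if necessary, cuts the number of cases in half.
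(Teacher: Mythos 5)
You should first be clear about what ``the paper's own proof'' is here: there is none. The paper states this proposition as a quotation of \cite[Proposition 5.6]{JKS16} and offers no argument, so your attempt has to be measured against the original proof in \cite{JKS16} rather than anything in this text. Measured that way, your outline is essentially a reconstruction of the known route: the right half of the four-term sequence you borrow from the proof of Theorem~\ref{thm:shift} is exactly the projective presentation $\bigoplus_{v\in V}L_{v+[k]}\xrightarrow{\partial}\bigoplus_{u\in U}L_{u+[k]}\xrightarrow{h}L_I\to 0$ that the paper itself attributes to \cite[Proposition 5.6]{JKS16} --- that is, to the proof of the very statement you are proving. Your homological reduction is correct and cleanly stated: the middle terms are projective (Remark~\ref{rmk:projinj}), left exactness of $\on{Hom}_B(-,L_J)$ applied to $0\to L_{I+k}\to P_1\to \Omega\to 0$ gives $\on{Hom}(\Omega,L_J)=\ker f^{*}$, and the long exact sequence for $0\to\Omega\to P_0\to L_I\to 0$ then identifies $\on{Ext}^1_B(L_I,L_J)$ with $\ker f^{*}/\on{im}\,\partial^{*}$, a subquotient of free rank-one $Z$-modules whose differentials are signed powers of $t$ read off from the lattice diagrams.

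The genuine shortfall is that the entire content of the proposition is concentrated in the step you label an ``expectation'': that the middle cohomology of this $Z$-matrix complex is nonzero precisely when a crossing quadruple $a,b,c,d$ exists. Everything preceding it is formal; the peak/valley dictionary ($V$ as peaks, $U$ as valleys of the profile of $L_I$) is correct, but the actual comparison of the shift exponents (the analogues of the $d(u)$, $d(v')$ appearing in the commuting-diagram construction in the proof of Theorem~\ref{thm:shift}) is never carried out, so as written this is a viable strategy rather than a proof. For reassurance, the expected answer is right --- \cite{BB17} proves that $\dim_\C\on{Ext}^1$ between rank-one modules equals the number of crossings, so your one-dimensional-contribution-per-crossing picture is exactly what the bookkeeping yields. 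Two small corrections to your sanity check: the symmetry $\on{Ext}^1(L_I,L_J)\cong D\on{Ext}^1(L_J,L_I)$ follows from the 2-Calabi--Yau property alone, with no need for Theorem~\ref{thm:shift}; and since that structure is only established for $\underline{\on{CM}}(\hat B)$, transporting the symmetry to $B$ implicitly uses that these finite-dimensional $\on{Ext}$ spaces are insensitive to completion, in the spirit of Corollary~\ref{cor:endiso}.
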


Let $D$ be a reduced $(k,n)$-Postnikov diagram, and let $\mathbb I= \mathbb I(D)$. 
Define the $\hat B$-module $T$ by $$T= \bigoplus_{I\in \mathbb I} L_I.$$
We also denote by $T$ the $B$-module defined in the same way. This abuse of notation is justified by the fact that the stable endomorphism algebra of $T$ 
is the same regardless of the completion, as we will see.

The following theorem was proved in \cite{JKS16}.
\begin{thm}
  For any reduced $(k,n)$-Postnikov diagram, the module $T$ defined above is a cluster tilting object in $\on{CM}(\hat B)$.
\end{thm}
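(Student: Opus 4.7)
The plan is to verify the two defining conditions of a cluster tilting object in the Frobenius category $\on{CM}(\hat B)$: rigidity $\on{Ext}^1_{\hat B}(T, T) = 0$, and the maximality statement that every $M \in \on{CM}(\hat B)$ with $\on{Ext}^1_{\hat B}(T, M) = 0$ lies in $\on{add}(T)$.

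Rigidity follows directly from the combinatorics. By Theorem \ref{thm:classnoncross}, $\mathbb I = \mathbb I(D)$ is a maximal noncrossing collection, so any two of its members are weakly separated, and Proposition \ref{prop:ext} immediately gives $\on{Ext}^1_{\hat B}(L_I, L_J) = 0$ for all $I, J \in \mathbb I$, hence $\on{Ext}^1_{\hat B}(T, T) = 0$.

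For maximality, I would first treat the rank 1 case, which is easy: every indecomposable rank 1 module in $\on{CM}(\hat B)$ is of the form $L_K$ for a unique $K \in \binom{[n]}{k}$, and $\on{Ext}^1_{\hat B}(L_I, L_K) = 0$ for all $I \in \mathbb I$ forces $K$ to be weakly separated from every element of $\mathbb I$ by Proposition \ref{prop:ext}. Maximality of $\mathbb I$ then gives $K \in \mathbb I$, so $L_K \in \on{add}(T)$.

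The main obstacle is the higher rank case, since $\on{CM}(\hat B)$ contains indecomposable modules of arbitrary rank. Here I would pass to the stable 2-Calabi-Yau triangulated category $\underline{\on{CM}}(\hat B)$ and exploit that $\on{Sub}Q_k$ is a quotient of $\on{CM}(\hat B)$ by a single indecomposable projective, so cluster tilting objects in the two categories correspond up to that summand. Since $|\mathbb I| = k(n-k)+1$ and the $n$ cyclic intervals $\{i+1, \dots, i+k\}$ in $\mathbb I$ correspond to the projective-injective summands (Remarks \ref{rmk:labels} and \ref{rmk:projinj}), $T$ has the correct number of indecomposable summands for a cluster tilting object. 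Invoking the general principle that in a Hom-finite 2-CY triangulated category a basic rigid object with the maximal possible number of indecomposable summands is cluster tilting, combined with the known cluster tilting theory in $\on{Sub}Q_k$ from \cite{GLS08}, then upgrades the rank 1 analysis to a proof that $T$ is cluster tilting in $\on{CM}(\hat B)$.
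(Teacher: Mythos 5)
Your proposal is correct in outline and rests on the same two pillars as the paper's proof, but the final step is packaged differently. The paper disposes of the theorem in two sentences: Proposition \ref{prop:ext} together with maximality of $\mathbb I$ shows $T$ is \emph{maximal rigid}, and then, since $\on{CM}(\hat B)$ is already known to contain a cluster tilting object, maximal rigid objects coincide with cluster tilting objects (citing \cite[Remark 4.8]{JKS16}, which rests on the Zhou--Zhu theorem for 2-Calabi--Yau categories with a cluster tilting object). Your rank-1 analysis is exactly the combinatorial content behind the paper's ``maximal rigid'' claim; where you diverge is in how the higher-rank modules are handled: instead of the maximal-rigid criterion you invoke the summand-counting criterion (a basic rigid object with as many indecomposable summands as a cluster tilting object is itself cluster tilting, a Dehy--Keller-type result). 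The two criteria are interchangeable here and require the same external input --- the prior existence of \emph{some} cluster tilting object in $\on{CM}(\hat B)$, supplied through $\on{Sub}Q_k$ and \cite{GLS08} --- so your route is a legitimate variant rather than a genuinely new argument. Two remarks: once you use counting, your rank-1 maximality step becomes logically superfluous (rigidity plus $|\mathbb I| = k(n-k)+1$ already suffices), and your ``general principle'' should be stated with the hypothesis that the ambient 2-CY category contains a cluster tilting object, since otherwise ``the maximal possible number of summands'' has no content; your appeal to \cite{GLS08} is what secures this hypothesis. To your credit, you identified the higher-rank modules as the real obstacle --- the paper's one-line deduction of maximal rigidity from Proposition \ref{prop:ext} alone (which on its face only controls rank-1 complements) silently delegates exactly that issue to the cited remark.
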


\begin{proof}
  Since $\mathbb I$ is a maximal noncrossing collection, it follows by Proposition $\ref{prop:ext}$ that $T$ is a maximal rigid object in $\on{CM}(\hat B)$.
  Since $\on{CM}(\hat B) $ is known to have at least one cluster tilting object, this is equivalent to $T$ being cluster tilting (cf.~\cite[Remark 4.8]{JKS16}).
\end{proof}

We remark that $T$ is also a maximal rigid object in $\on{CM}(B)$, but we do not know whether it is actually cluster tilting.

\begin{rmk}
  Any maximal noncrossing collection contains the $n$ cyclic intervals of length $k$. 
  Remark \ref{rmk:labels} says that the labels of the $n$ boundary regions of a Postnikov diagram are precisely these $n$ cyclic intervals.
  Indeed, any cluster tilting object in $\on{CM}(\hat B)$ has as summands the $n$ indecomposable projective-injective objects, which are labelled 
  by such intervals. 
\end{rmk}

\begin{thm}\cite[Theorem 10.3 and Theorem 11.2]{BKM16}\label{thm:bkm}.
  Let $D$ be a reduced $(k,n)$-Postnikov diagram, let $T$ be as above and let $A(D) = \wp(Q, W, F)$ be as in Section \ref{sec:post}.
  Then there exists a unique isomorphism $A(D)\to \on{End}_B(T)$ such that the vertex $I$ of $Q$ is mapped to $\on{id}_{L_I}$ and any arrow $I\to J$ in $Q$ is mapped to 
  the morphism $g_{JI}:L_I\to L_J$.
  Moreover, this induces an isomorphism $\hat A(D) = \hat \wp(Q, W, F)\to \on{End}_{\hat B}(T)$.
\end{thm}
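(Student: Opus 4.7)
The plan is to construct the isomorphism explicitly on the level of path algebras, check well-definedness, and then establish bijectivity using the rank-one structure of $\on{Hom}$-spaces between rank-one Cohen-Macaulay modules.

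First, I would define a $\C$-algebra homomorphism $\phi : \C Q \to \on{End}_B(T)$ by sending the idempotent at vertex $I$ to $\on{id}_{L_I}$ (which is a well-defined morphism in $\on{End}_B(T)$ since $L_I$ is a summand of $T$) and sending each arrow $a : I \to J$ in $Q$ to the canonical generator $g_{JI} : L_I \to L_J$ described before Theorem~\ref{thm:shift}. To show that this descends to $A(D) = \wp(Q,W,F)$, I need to verify that for every non-frozen arrow $a: J\to K$ of $Q$, the image of $\partial_a W$ vanishes in $\on{End}_B(T)$. By construction of $W$, such an arrow $a$ bounds exactly one clockwise and one counterclockwise region, so $\partial_a W = p - p'$ where $p, p'$ are two paths from $K$ to $J$ going around the two regions. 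Both paths are compositions of generators $g_{\bullet\bullet}$, and both yield $\on{Hom}_B(L_K, L_J)$-elements. Since $\on{Hom}_B(L_K,L_J)$ is a free $\C[t]$-module of rank one on $g_{JK}$ by the discussion after Remark~\ref{rmk:projinj}, the equality $p = p'$ reduces to checking they have the same $t$-exponent. This can be done by precomposing with $g_{KJ}$ (the image of $a$) and observing that both $g_{KJ} p$ and $g_{KJ} p'$ describe the cyclic path $u_J$ at vertex $J$, which in turn corresponds to multiplication by $t$ on $L_J$ via the lattice-diagram description.

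Next I would establish bijectivity. For surjectivity, any morphism $L_I \to L_J$ is of the form $t^N g_{JI}$, and one has to exhibit a path in $Q$ realising this. Existence of \emph{some} path from $I$ to $J$ whose image is a nonzero scalar multiple of $g_{JI}$ follows from the connectedness of the dual graph of alternating regions of $D$; any further factor of $t$ is produced by appending a cycle at the source or target, because $u \in Z(A)$ maps to multiplication by $t$. For injectivity, I would argue that two paths from $I$ to $J$ in $Q$ that map to the same morphism in $\on{End}_B(T)$ must already be equal modulo the Jacobian relations; equivalently, the relations $\partial_a W$ together with $\on{id}_{L_I} \neq 0$ and the identification of $u$ with $t$ are sufficient to detect every linear relation in $\on{End}_B(T)$. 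A clean way is to filter both sides by powers of $u$ (resp.\ $t$) and compare graded pieces: on the $A$ side, each graded component $\on{gr}_m A \cdot e_I$ is spanned by paths from $I$ to various $J$ modulo the quadratic part of the relations, and on the endomorphism side it has dimension equal to the number of $J$ with $m \geq$ some minimal exponent determined by the relative position of $I$ and $J$ in the diagram.

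Finally, the completed version follows almost formally: the generators $g_{JI}$ also lie in $\on{End}_{\hat B}(T)$, the relations $\partial_a W$ still hold, and one invokes Proposition~\ref{prop:admissible} to identify $\hat\wp(Q,W,F)$ with the completion $\hat A(D)$. The main obstacle I anticipate is the explicit verification at Step~2 that the two paths around the two regions adjacent to a non-frozen arrow produce the same element of $\on{End}_B(T)$; this requires a careful combinatorial argument tracing which $t$-powers appear in each leg, using that strand crossings in $D$ translate into the alternation pattern governing the indices where $x_i$ acts by $1$ versus $t$ in the definition of each $L_I$. Once this compatibility between the planar combinatorics of $D$ and the lattice-diagram combinatorics of the $L_I$ is established, the remainder of the argument is essentially a dimension-count.
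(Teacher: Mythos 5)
A preliminary remark on the comparison: the paper does not prove Theorem~\ref{thm:bkm} itself --- it is imported wholesale from \cite{BKM16} (Theorems 10.3 and 11.2) --- so your proposal is really measured against the proof there. Your overall architecture is the right one (send $e_I\mapsto \on{id}_{L_I}$, arrows to the generators $g_{JI}$, verify the relations $\partial_aW$ using that each $\on{Hom}_B(L_I,L_J)$ is free of rank one over $\C[t]$ and that every face cycle maps to multiplication by $t$), and the relation check by composing with the arrow $a$ and cancelling is legitimate since composition with a nonzero map between these torsion-free rank-one modules is injective. But there are two genuine gaps. First, surjectivity: the images of paths from $I$ to $J$ form the $\C[t]$-submodule $t^{N_0}\C[t]\,g_{JI}$, where $N_0$ is the minimal exponent achieved by any path. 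Connectedness of the dual graph only gives $N_0<\infty$, and appending $u$-cycles only \emph{raises} the exponent; it can never produce $g_{JI}$ itself. Showing $N_0=0$, i.e.\ exhibiting a path whose image is cokernel-minimal, is the substantive content of \cite[Theorem 10.3]{BKM16} and requires their dimer-model combinatorics (weights of paths, perfect-matching-type arguments); without it you only obtain an isomorphism onto a possibly proper subalgebra. Your injectivity sketch has the same hidden dependency: the filtration/dimension count presupposes knowing that $e_JAe_I$ is spanned by $\left\{u^Np_{IJ}\right\}$, which is exactly the normal-form result \cite[Corollary 9.4]{BKM16} (quoted by the paper in the proof of Proposition~\ref{prop:complete}); as written, your graded comparison assumes what it needs to prove.

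Second, the completed statement does not follow ``almost formally'' via Proposition~\ref{prop:admissible}, which is simply inapplicable here: that proposition requires the Jacobian ideal to be admissible in $\C Q$, which holds for $(\underline Q,\underline W)$ but fails badly for the ice quiver $(Q,W,F)$. Indeed $A(D)$ contains the elements $u^N$ of unbounded length, so no power of the arrow ideal lies in the relations ideal, and $\hat\wp(Q,W,F)$ is strictly larger than $\wp(Q,W,F)$ (it contains, e.g., $\sum_N u^N e_I$). The correct argument, as in \cite[Theorem 11.2]{BKM16}, is to check that the isomorphism $g$ identifies the $\left<Q_1\right>$-adic filtration on $A$ with the $t$-adic filtration on $\on{End}_B(T)$ and that $\on{End}_{\hat B}(T)$ is the corresponding completion of $\on{End}_B(T)$, so that $g$ passes to completions. (The uniqueness clause, which you do not address, is at least immediate, since the images of all generators are prescribed.)
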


We call $g:A\to \on{End}_B(T)$ this isomorphism.
In particular, the frozen vertices of $Q$ correspond to the indecomposable projective $B$-modules, and $B^{opp} $ is identified with $eAe\subseteq A$ (\cite[Corollary 10.4]{BKM16}).
In this article, we focus on the study of the algebra $\Lambda = A/AeA$. This corresponds to quotienting out endomorphisms of $T$ factoring through its projective summands, thus moving to the stable category.

\begin{lemma}\label{lem:descent}
  The isomorphism $g: A\to \on{End}_B(T)$ induces an isomorphism $\underline g: A/AeA \to \underline{\on{End}}_B(T)$.
  In the same way, the isomorphism $\hat A\cong \on{End}_{\hat B}(T)$ induces an isomorphism $\hat A/\hat Ae\hat A \cong \underline{\on{End}}_{\hat B}(T)$.
\end{lemma}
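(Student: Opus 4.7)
The plan is to verify that the isomorphism $g$ identifies the ideal $AeA$ with the ideal of endomorphisms of $T$ factoring through its projective summands, so that the quotient becomes the stable endomorphism algebra by definition.

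First I would identify $g(e)$ explicitly. Since $e$ is the sum of the vertex idempotents $e_I$ for $I \in F$, and each $e_I$ maps to $\on{id}_{L_I}$ under $g$, we have that $g(e) = \on{id}_{eT}$ is the idempotent projector of $T$ onto its frozen summand $eT = \bigoplus_{I \in F} L_I$. By Remark \ref{rmk:projinj}, the frozen vertices are precisely the labels $\{i+1,\dots,i+k\}$ for $i \in [n]$, so $eT$ is exactly the direct sum of all indecomposable projective-injective summands of $T$.

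Next I would check that $g(AeA)$ equals the ideal $P(T,T) \subseteq \on{End}_B(T)$ of morphisms factoring through some object of $\on{add}(eT)$. Any finite sum $\sum_j \alpha_j \circ \on{id}_{eT} \circ \beta_j$ factors through a finite direct sum of copies of $eT$; conversely, a morphism $T \xrightarrow{\beta} X \xrightarrow{\alpha} T$ with $X \in \on{add}(eT)$ can be rewritten as $(\alpha \iota) \circ \on{id}_{eT} \circ (\pi \beta)$ using the structural embedding $\iota\colon eT \hookrightarrow T^m$ and projection $\pi\colon T^m \twoheadrightarrow eT$ for appropriate $m$. Hence $g(AeA) = P(T,T)$, and the induced map descends to an isomorphism $A/AeA \to \on{End}_B(T)/P(T,T) = \underline{\on{End}}_B(T)$.

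For the completed version, exactly the same argument works with $\hat A$, $\hat B$ and the isomorphism $\hat A \cong \on{End}_{\hat B}(T)$ of Theorem \ref{thm:bkm}, giving $\hat A/\hat A e\hat A \cong \underline{\on{End}}_{\hat B}(T)$. The only subtle point, and the one I would flag as the main thing to justify carefully, is matching $P(T,T)$ with the intended stable-category convention: in $\on{CM}(\hat B)$ this is automatic because Remark \ref{rmk:projinj} identifies all indecomposable projectives of $\on{CM}(\hat B)$ with summands of $eT$, so $\on{add}(eT)$ is the entire subcategory of projectives; in the non-completed setting one relies on the fact that $\underline{\on{End}}_B(T)$ is taken modulo morphisms factoring through projective-injective summands of $T$, which are again exactly the summands of $eT$. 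Once this identification is made, the lemma follows immediately from the preceding two steps.
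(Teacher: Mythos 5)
Your proposal is correct and follows essentially the same route as the paper: the paper's proof also identifies $AeA$ under $g$ with the ideal $E$ of endomorphisms of $T$ factoring through the projective summand $P = eT$, and then passes to the quotients via a commutative diagram with exact rows. You merely spell out the standard verification that $g(AeA)$ equals this ideal (which the paper asserts by saying the leftmost vertical maps are isomorphisms), and your flagged subtlety about projectives in the non-completed case is handled the same way, since morphisms factoring through projective $B$-modules are exactly those factoring through $\on{add}(eT)$.
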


\begin{proof}
  We give the proof for the non-complete case; the other case is similar.
  We have $T\cong T'\oplus P$, where $P$ is the sum of the indecomposable projective $B$-modules $\mathcal P_i$. Call 
  $E$ the subset of $\on{End}_B(T)$ consisting of maps that factor through $P$.
  There is a commutative diagram
  $$
  \xymatrix{
    0\ar[r] &AeA \ar[r]\ar[d]&A\ar[r]\ar[d]^g& A/AeA\ar[d]\ar[r] & 0\\
    0\ar[r] & E\ar[r] & \on{End}_B(T)\ar[r] & \underline{\on{End}}_B(T)\ar[r] &0
  }
  $$
  where the two leftmost vertical maps are isomorphisms, thus the claim is proved.
\end{proof}

The following results will justify our claims that the completion
does not play a big role in our setting.

\begin{prop}\label{prop:complete}
  We have $A/ Ae A \cong \hat\wp(\underline Q, \underline W)\cong \hat A/\hat A e\hat A $.
\end{prop}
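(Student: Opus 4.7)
The plan is to deduce the statement from Corollary \ref{cor:admissible}. Directly from the general construction in Section \ref{sec:jacobian} we have canonical identifications $A/AeA \cong \wp(\underline Q, \underline W)$ and $\hat A/\hat Ae\hat A \cong \hat\wp(\underline Q, \underline W)$, so the claim reduces to showing that the natural map $\wp(\underline Q, \underline W) \to \hat\wp(\underline Q, \underline W)$ is an isomorphism. By Corollary \ref{cor:admissible} it then suffices to exhibit an integer $N$ such that every path in $A = \wp(Q, W, F)$ of length at least $N$ equals, in $A$, some path that passes through a frozen vertex.

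I will translate this into a statement in $\on{End}_B(T)$ via the isomorphism $g\colon A \to \on{End}_B(T)$ of Theorem \ref{thm:bkm}. Under $g$ each arrow of $Q$ is sent to one of the generators $g_{JI}$, so any path from $I$ to $J$ in $Q$ gives an element of the free rank-one $\C[t]$-module $\on{Hom}_B(L_I, L_J) = \C[t]\cdot g_{JI}$. The central element $u$ of Definition \ref{defin:u} is sent to multiplication by the central element $t \in Z$, because each cyclic path $u_I$ goes once around a bounded region of the lattice diagram of $L_I$. Using the Jacobian relations $\partial_a W = 0$, any sufficiently long path can be rewritten as $u^d$ times a path of length bounded in terms of $Q$ alone, so its image in $\on{Hom}_B(L_I, L_J)$ is $c\,t^d g_{JI}$ with $d$ growing linearly in the original length.

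It remains to produce a fixed $N$ for which every morphism $t^N g_{JI}$ factors through a projective summand $\mathcal P_i = L_{i+1,\ldots,i+k}$ of $T$ in $\on{CM}(B)$. Since $\underline{\on{CM}}(\hat B)$ is Hom-finite and 2-Calabi-Yau, the algebra $\hat A/\hat Ae\hat A \cong \underline{\on{End}}_{\hat B}(T)$ is finite dimensional, so the $\C[[t]]$-submodule of $\C[[t]]\cdot g_{JI}$ consisting of maps that factor through a projective summand of $T$ is of the form $t^{N_{IJ}}\C[[t]]\cdot g_{JI}$. Taking $N = \max_{I,J} N_{IJ}$, one gets a factorisation $t^N g_{JI} = f \circ h$ in $\on{CM}(\hat B)$ with $f \in \C[[t]]\cdot g_{J \mathcal P_i}$ and $h \in \C[[t]]\cdot g_{\mathcal P_i I}$ for some $i$; since $g_{J\mathcal P_i}\circ g_{\mathcal P_i I}$ is itself a fixed power of $t$ times $g_{JI}$, comparing $t$-orders lets one replace $f$ and $h$ by elements of the corresponding $\C[t]$-submodules, so the factorisation already lives in $\on{CM}(B)$. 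I expect the main obstacle to be the first ingredient: making precise that after applying the Jacobian relations, a long enough path of $Q$ must be rewritten as $u^d$ times a short path with $d$ large. This should follow from a pigeonhole argument on vertices of $Q$, combined with the observation that the potential relations identify the cyclic subpaths at each repeated vertex $I$ with $u_I$.
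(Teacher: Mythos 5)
Your overall architecture coincides with the paper's: reduce via Corollary \ref{cor:admissible} to showing that every sufficiently long path in $Q$ equals, in $A$, a path through a frozen vertex; transport the problem through the isomorphism $g$ of Theorem \ref{thm:bkm} to morphisms of the form $t^N g_{JI}$; and factor high powers of $t$ through the projectives. The genuine gap is exactly the step you flag yourself: the claim that a long path can be rewritten, using the relations, as $u^d$ times a path of bounded length with $d$ large. Your pigeonhole sketch does not prove this. A long path does revisit vertices, but the cyclic subpath based at a repeated vertex is an \emph{arbitrary} cycle in $Q$, whereas the Jacobian relations only directly identify the face cycles (the cycles appearing in $W$) with the elements $u_I$ of Definition \ref{defin:u}. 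That every cycle in the dimer algebra reduces to a power of $u$, and more precisely that $e_JAe_I$ has basis $\left\{ u^N p_{IJ}\ |\ N\in\N \right\}$ with $p_{IJ}$ a path not equivalent to one containing a cycle, is a nontrivial consistency property of these dimer models; the paper imports it wholesale as \cite[Corollary 9.4]{BKM16}, and additionally observes that paths of fixed $u$-degree have bounded length, so that long paths force $d$ to be large. Without citing this result or proving consistency, your argument does not close at its central point.

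Two smaller remarks. First, your opening claim that $\hat A/\hat Ae\hat A\cong \hat\wp(\underline Q,\underline W)$ holds ``directly from the general construction'' is not purely formal: the completed Jacobian algebra is a quotient by a \emph{closed} ideal, while $\hat Ae\hat A$ is a priori only the algebraic two-sided ideal generated by $e$. The paper's proof explicitly deduces closedness of $\hat Ae\hat A$ from the same factorization argument, and your write-up needs this step as well. Second, your derivation of a uniform $N$ from Hom-finiteness of $\underline{\on{CM}}(\hat B)$ is correct and is a legitimate alternative to the paper's appeal to lattice diagrams; in fact it can be simplified so as to make your descent from $\C[[t]]$ to $\C[t]$ unnecessary: for a fixed projective $L_P$ one has $g_{JP}\circ g_{PI} = t^{c}\,g_{JI}$ for some finite $c = c(I,J,P)$, whence $t^N g_{JI} = g_{JP}\circ \left(t^{N-c} g_{PI}\right)$ for all $N\geq c$, a factorization already defined over $\C[t]$.
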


\begin{proof}
  Let us first prove the first isomorphism.
  By Corollary \ref{cor:admissible}, it is enough to prove that every sufficiently long path in $Q$ is equivalent in $A$ to a path through a frozen vertex.
  By \cite[Corollary 9.4]{BKM16}, we have that a basis of $e_JAe_I$ is given by the set
  \begin{align*}
    \left\{ u^N p_{IJ}\ |\ N\in \N \right\}
  \end{align*}
  where $u$ is as in Definition \ref{defin:u} and $p_{IJ}$ is a chosen path from $I$ to $J$ that is not equivalent to a path containing a cycle. There is a unique equivalence class of paths from $I$ to $J$ containing such an element. 
  This basis is mapped via $g$ to the basis 
  \begin{align*}
    \left\{ t^N g_{JI}\ | \ N\in \N \right\} 
  \end{align*}
  of $\on{Hom}_B(L_I, L_J)$.

  Now observe that paths of a fixed degree in $u$ have bounded length, so for any $d$ we can find a path with degree larger than $d$.
  Translated into maps from $L_I$ to $L_J$, it is then enough to prove that every map of the form $g_{JI}t^N$ for $N\gg 0$ factors through a map $g_{PI}$ for a projective $L_P$.
  Given the description of maps of the form $g_{JI}t^N$ as embeddings of lattice diagrams, this is clear.

  To conclude, observe that the above argument implies that the two-sided ideal $\hat A e\hat A$ is closed in $\hat A$, so the second isomorphism follows.
\end{proof}

From Theorem~\ref{thm:bkm}, Lemma~\ref{lem:descent} and Proposition~\ref{prop:complete}, we get:

\begin{cor}\label{cor:endiso}
  There is an algebra isomorphism 
  \begin{align*}
    \underline{\on{End}}_B(T)\cong\underline{\on{End}}_{\hat B}(T).
  \end{align*}
\end{cor}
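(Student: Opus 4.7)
The plan is to assemble the corollary as a short chain of isomorphisms already produced by the three previous results, which together form a commutative square whose diagonal gives the desired identification.

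First I would invoke Theorem~\ref{thm:bkm} twice in parallel: once to get the isomorphism $g\colon A(D)\xrightarrow{\sim}\on{End}_B(T)$ for the non-completed algebra, and once to get its completed analogue $\hat g\colon \hat A(D)\xrightarrow{\sim}\on{End}_{\hat B}(T)$. Both isomorphisms send the idempotent $e$ (the sum of the frozen vertex idempotents) to the same idempotent of the endomorphism ring, namely the projection onto the projective-injective summand $P$ of $T$. Consequently they descend, by Lemma~\ref{lem:descent}, to isomorphisms
\begin{align*}
\underline g\colon A/AeA \xrightarrow{\sim} \underline{\on{End}}_B(T), \qquad
\underline{\hat g}\colon \hat A/\hat A e\hat A \xrightarrow{\sim} \underline{\on{End}}_{\hat B}(T).
\end{align*}

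Next I would invoke Proposition~\ref{prop:complete}, which supplies the key comparison $A/AeA\cong \hat A/\hat A e\hat A$ — the place where passing to the completion really is harmless, thanks to the fact that every sufficiently long path in $\underline Q$ factors through a frozen vertex. Composing, one obtains
\begin{align*}
\underline{\on{End}}_B(T) \;\xleftarrow{\underline g}\; A/AeA \;\xrightarrow{\sim}\; \hat A/\hat A e\hat A \;\xrightarrow{\underline{\hat g}}\; \underline{\on{End}}_{\hat B}(T),
\end{align*}
which is the sought algebra isomorphism. No real obstacle remains here: the only substantive content has been absorbed into the earlier statements (most crucially into Proposition~\ref{prop:complete}), and what is left is just chasing the diagram. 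If desired, one can verify that the resulting isomorphism is canonical (i.e.\ induced by the obvious map $T\mapsto T$ under the inclusion of $B$-modules into $\hat B$-modules via completion) by checking that on the generators $g_{JI}$ described after Remark~\ref{rmk:projinj} the composite acts as the identity, which is immediate from the explicit form of $g$ and $\hat g$ in Theorem~\ref{thm:bkm}.
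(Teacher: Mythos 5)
Your proposal is correct and is essentially identical to the paper's own argument: the corollary is stated there as an immediate consequence of Theorem~\ref{thm:bkm}, Lemma~\ref{lem:descent} and Proposition~\ref{prop:complete}, composed exactly as in your chain $\underline{\on{End}}_B(T)\cong A/AeA\cong \hat A/\hat Ae\hat A\cong \underline{\on{End}}_{\hat B}(T)$. The closing remark about canonicity on the generators $g_{JI}$ is a harmless extra not present in the paper.
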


In view of Corollary~\ref{cor:endiso}, we can essentially ignore the completions. In particular, all statements about $T$ that depend on the triangulated or Calabi-Yau structure
of $\underline{\on{CM}}(B)$ (such as mutation and suspensions) can be carried out in $\underline{\on{CM}}(\hat B)$ instead, without affecting $\underline{\on{End}}_B(T)$ and 
therefore $\Lambda$.

The following results about $\on{CM}(\hat B)$ coming from the combinatorics of Postnikov diagrams still hold true for $\on{CM}(B)$, if we replace ``cluster tilting'' by ``maximal rigid''.

\begin{lemma}
  For any fixed $(k,n)$ there is a bijection
  $$
  \xymatrix{
	*{\left\{\begin{matrix}
	  \text{Basic cluster tilting objects in } \on{CM}(\hat B)\\ \text{that lie in } \mathcal B  \end{matrix}\right\}}
      \ar@{<->}[rr] &&
	*{\left\{\begin{matrix}
	  \text{Maximal noncrossing}\\ \text{collections of}\\ \text{elements of } \binom{[n]}{k} 
      \end{matrix}\right\}.}
  }
  $$
\end{lemma}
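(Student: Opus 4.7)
The plan is to define explicit maps in both directions and verify they are mutually inverse.
Given a basic cluster tilting object $T\in\on{CM}(\hat B)$ lying in $\mathcal B$, decompose $T=\bigoplus_{i=1}^r T_i$ into indecomposables and set
\begin{align*}
  \mathbb I(T) = \left\{ I\in \binom{[n]}{k}\ \Big|\ L_I\cong T_i \text{ for some } i\right\}.
\end{align*}
Since $I\mapsto L_I$ is injective on isomorphism classes (\cite[Proposition 5.2]{JKS16}) and $T$ is basic, we have $|\mathbb I(T)| = r$. Conversely, given a maximal noncrossing collection $\mathbb I$, set $T(\mathbb I)=\bigoplus_{I\in\mathbb I}L_I$.

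First I would check that $\mathbb I(T)$ is noncrossing. Cluster tilting implies rigidity, so $\on{Ext}^1_{\hat B}(L_I,L_J)=0$ for all $I,J\in\mathbb I(T)$; by Proposition \ref{prop:ext}, these $I,J$ are then pairwise noncrossing. Next I would argue maximality by counting summands. From the theorem preceding this lemma, cluster tilting objects arising from $(k,n)$-Postnikov diagrams exist and have $k(n-k)+1$ indecomposable summands. Since in the Krull--Schmidt 2-Calabi-Yau Frobenius category $\on{CM}(\hat B)$ any two basic cluster tilting objects have the same number of indecomposable summands (invariance reduces, after removing the $n$ projective-injective summands common to all cluster tilting objects, to the Dehy--Keller statement in $\underline{\on{CM}}(\hat B)$), every such $T$ has exactly $k(n-k)+1$ summands. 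Combined with \cite[Theorem 4.7]{OPS15}, which gives that maximal noncrossing collections have exactly $k(n-k)+1$ elements, $\mathbb I(T)$ is noncrossing of maximal possible cardinality and hence a maximal noncrossing collection.

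For the inverse direction I would invoke Theorem \ref{thm:classnoncross} to produce, from a maximal noncrossing collection $\mathbb I$, a reduced $(k,n)$-Postnikov diagram $D$ with $\mathbb I(D)=\mathbb I$; then the theorem preceding this lemma guarantees that $T(\mathbb I)=\bigoplus_{I\in\mathbb I}L_I$ is cluster tilting in $\on{CM}(\hat B)$, and it lies in $\mathcal B$ by construction. The two assignments $T\mapsto\mathbb I(T)$ and $\mathbb I\mapsto T(\mathbb I)$ are then inverse to each other on the nose: $T(\mathbb I(T))\cong T$ because both sides decompose as the direct sum of the $L_I$ over the same indexing set, and $\mathbb I(T(\mathbb I))=\mathbb I$ because the $L_I$ are pairwise non-isomorphic.

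The main obstacle I expect is the maximality step, i.e.\ confirming that a rigid object in $\mathcal B$ with $\on{Ext}^1$-vanishing labels cannot be strictly contained in a larger one. This is not addressed purely combinatorially by Proposition \ref{prop:ext}; it relies on cardinality invariance of basic cluster tilting objects in a 2-Calabi-Yau (Frobenius) category, matched against the known count $k(n-k)+1$ coming from the Postnikov construction.
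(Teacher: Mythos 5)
Your proof is correct, and its core dictionary is the same as the paper's: summands $L_I$ correspond to labels $I$, and rigidity corresponds to weak separation via Proposition \ref{prop:ext}; your inverse direction (Theorem \ref{thm:classnoncross} plus the preceding theorem from \cite{JKS16}) is exactly the paper's. Where you genuinely differ is the maximality step. The paper disposes of it in one line: the $L_I$ are precisely the indecomposable objects of $\mathcal B$, so maximal \emph{rigid} objects of $\on{CM}(\hat B)$ lying in $\mathcal B$ correspond to maximal noncrossing collections, and maximal rigid objects coincide with cluster tilting objects in $\on{CM}(\hat B)$ by \cite[Remark 4.8]{JKS16} --- an equivalence the paper has already invoked in the theorem just before the lemma. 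You instead prove maximality by counting: invariance of the number of indecomposable summands of basic cluster tilting objects (Dehy--Keller in the stable 2-Calabi-Yau category, after splitting off the $n$ projective-injectives), anchored at $k(n-k)+1$ by the Postnikov examples and matched against purity \cite[Theorem 4.7]{OPS15}. This is valid, but heavier than necessary, and the ``main obstacle'' you flag at the end dissolves under a direct argument: if $J$ is weakly separated from every element of $\mathbb I(T)$, then $\on{Ext}^1(T,L_J)=0$ by Proposition \ref{prop:ext}, so $L_J\in\on{add}T$ by the very definition of cluster tilting, forcing $J\in\mathbb I(T)$; no counting, no Dehy--Keller, no purity is needed. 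What your route buys in exchange is a self-contained verification that every cluster tilting object in $\mathcal B$ realises the maximal cardinality $k(n-k)+1$ without appealing to the maximal-rigid/cluster-tilting equivalence; since the paper establishes and uses that equivalence anyway, the direct argument is the more economical one in this context.
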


\begin{proof}
  Since the modules $L_I$ are indecomposable, they are precisely the indecomposable objects in $\mathcal B$.
  It follows that maximal rigid objects in $\on{CM}(\hat B)$ that lie in $\mathcal B$ correspond precisely to maximal noncrossing collections.
\end{proof}

Combining this with Theorem \ref{thm:classnoncross} we get

\begin{prop}\label{prop:correspond}
  Basic cluster tilting objects in $\on{CM}(\hat B)$ (respectively in $\underline{\on{CM}}(\hat B)$) 
  that lie in $\mathcal B$ are precisely those contructed as above from reduced $(k,n)$-Postnikov diagrams.
\end{prop}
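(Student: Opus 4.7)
The plan is to chain together the preceding results in a straightforward way, so the proof is essentially a synthesis rather than requiring new arguments. I will treat the two halves of the statement (for $\on{CM}(\hat B)$ and for $\underline{\on{CM}}(\hat B)$) in turn.

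First, for the $\on{CM}(\hat B)$ version, I would simply combine the bijection from the lemma immediately preceding the statement with Theorem~\ref{thm:classnoncross}. Explicitly: given a basic cluster tilting object $T\in \mathcal B$, write $T = \bigoplus L_{I_i}$ with the $L_{I_i}$ indecomposable (using that $\mathcal B$ is additively generated by the $L_I$), and associate to it the collection $\mathbb I = \{I_i\}\subset \binom{[n]}{k}$. By the preceding lemma this is a maximal noncrossing collection. By Theorem~\ref{thm:classnoncross} there exists a reduced $(k,n)$-Postnikov diagram $D$ with $\mathbb I(D) = \mathbb I$, and the object constructed from $D$ as in Section~\ref{sec:tilting} is $\bigoplus_{I\in \mathbb I(D)} L_I = T$. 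Conversely, starting from a diagram $D$, the construction gives $T= \bigoplus_{I\in \mathbb I(D)} L_I$, which is cluster tilting in $\on{CM}(\hat B)$ (this was already noted in Section~\ref{sec:tilting}), and it clearly lies in $\mathcal B$.

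For the stable category version, I would use the bijection noted at the beginning of Section~\ref{sec:tilting}: basic cluster tilting objects of $\on{CM}(\hat B)$ and of $\underline{\on{CM}}(\hat B)$ correspond by adding or removing all indecomposable projective summands. By Remark~\ref{rmk:projinj} the indecomposable projectives in $\on{CM}(\hat B)$ are exactly the modules $L_{\{i+1,\dots, i+k\}}$ indexed by the $n$ cyclic intervals of length $k$; these lie in $\mathcal B$. Moreover, as recalled in the remark after the definition of $T$ in Section~\ref{sec:tilting}, every maximal noncrossing collection contains these $n$ cyclic intervals, which correspond (by Remark~\ref{rmk:labels}) to the boundary regions of any Postnikov diagram. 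Hence the bijection of the first paragraph restricts nicely: removing the projective summands from a cluster tilting object of the form $\bigoplus_{I\in \mathbb I(D)} L_I$ corresponds to discarding precisely the summands labelled by the boundary regions of $D$, yielding a cluster tilting object of $\underline{\on{CM}}(\hat B)$ still lying in $\mathcal B$; and any such object in $\mathcal B\subset \underline{\on{CM}}(\hat B)$ extends to one in $\on{CM}(\hat B)\cap \mathcal B$ by adjoining the $n$ projectives, falling under the first half.

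I do not anticipate any genuine obstacle here: all the nontrivial work (the rigidity/noncrossing equivalence of Proposition~\ref{prop:ext}, the classification of maximal noncrossing collections in Theorem~\ref{thm:classnoncross}, the uniqueness of the Postnikov diagram attached to a collection, and the passage between $\on{CM}(\hat B)$ and $\underline{\on{CM}}(\hat B)$ via projectives) has already been imported. The only subtlety to be careful about is making sure that ``lying in $\mathcal B$'' is preserved under both directions of the stable-to-exact correspondence, which is immediate once one observes that the indecomposable projectives themselves belong to $\mathcal B$.
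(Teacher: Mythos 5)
Your proposal is correct and matches the paper's own argument, which is exactly the combination of the preceding lemma (cluster tilting objects in $\mathcal B$ correspond to maximal noncrossing collections) with Theorem~\ref{thm:classnoncross}. Your second paragraph merely makes explicit the add/remove-projectives bijection for the $\underline{\on{CM}}(\hat B)$ version, which the paper leaves implicit from the start of Section~\ref{sec:tilting}, so there is no substantive difference.
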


There are various notions of mutation for the various objects we are considering, and in a sense they all correspond to each other. The rest of this section is devoted to 
making this statement a bit more precise.

There is a well-defined mutation of cluster tilting objects in $\on{CM}(\hat B)$ \cite[Remark~4.8]{JKS16}. Namely, if $X\oplus T$ is a cluster tilting object
and $X$ is indecomposable nonprojective then there is a unique indecomposable nonprojective $Y\not\cong X$ such that $Y\oplus T$ is cluster tilting. 
Now if $T$ is cluster tilting in $\on{CM}(\hat B)$ and moreover $T\in \mathcal B$, then $T= \bigoplus_{I\in \mathbb I}L_I$ for some 
maximal noncrossing collection $\mathbb I$. Suppose that $I\in \mathbb I$ is not a cyclic interval of $[n]$ (i.e.~not the label of a projective $\hat B$-module). Then, under 
some condition,
there is a unique $I'$ by which we can replace $I$ so that $\mathbb I\setminus \left\{ I \right\}\cup \left\{ I' \right\}$ is a maximal noncrossing collection.
The precise description of $I'$ is rather cumbersome, and can be found for instance in \cite[Theorem~1.4]{OPS15}. 
If we start from the cluster tilting object $T = \bigoplus_{J\in \mathbb I} L_J\in \mathcal B$, and we mutate it at $L_I$, the new cluster tilting object will be $\bigoplus_{J\in \mathbb I'}L_J$ by Proposition~\ref{prop:correspond}.

There is a combinatorial interpretation of mutation of cluster tilting objects (or more directly of maximal noncrossing collections) in terms of Postnikov diagrams.
This is given by the notion of \emph{geometric exchange} on a Postnikov diagram, i.e.~applying the local operation depicted in Figure \ref{fig:geomex}, followed by untwisting and boundary untwisting moves 
to make the new Postnikov diagram reduced.
\begin{figure}[h]
\[
\begin{tikzpicture}[baseline=(bb.base),
   quivarrow/.style={black, -latex, very thick}, doublearrow/.style={black, to-to,thick, line join=round,
decorate, decoration={
    zigzag,
    segment length=4,
    amplitude=.9,post=lineto, pre= lineto, pre length = 2pt,
  post length=2pt}}, scale = 0.5]
\newcommand{\goodarrow}{\arrow{angle 60}}
\newcommand{\dotrad}{0.07cm} 
\path (0,0) node (bb) {}; 

\draw (0,0) node {$I$};


\draw [quivarrow,shorten <=8pt,shorten >=5pt]
(0,0)--(0,4);

\draw [quivarrow,shorten <=8pt,shorten >=5pt]
(0,0)--(0,-4);

\draw [quivarrow,shorten <=5pt,shorten >=8pt]
(4,0)--(0,0);

\draw [quivarrow,shorten <=5pt,shorten >=8pt]
(-4,0)--(0,0);


\draw  plot coordinates {(4,-2) (-2,4)}
[postaction=decorate,decoration={markings,
mark= at position 0.2 with \goodarrow,
mark= at position 0.5 with \goodarrow,
mark= at position 0.8 with \goodarrow}];

\draw  plot coordinates {(-4,2) (2,-4)}
[postaction=decorate,decoration={markings,
mark= at position 0.2 with \goodarrow,
mark= at position 0.5 with \goodarrow,
mark= at position 0.8 with \goodarrow}];

\draw  plot coordinates {(-4,-2) (2,4)}
[postaction=decorate,decoration={markings,
mark= at position 0.2 with \goodarrow,
mark= at position 0.5 with \goodarrow,
mark= at position 0.8 with \goodarrow}];

\draw  plot coordinates {(4,2) (-2,-4)}
[postaction=decorate,decoration={markings,
mark= at position 0.2 with \goodarrow,
mark= at position 0.5 with \goodarrow,
mark= at position 0.82 with \goodarrow}];

\begin{scope}[shift={(12,0)}]

\draw (0,0) node {$I'$};


\draw [quivarrow,shorten <=5pt,shorten >=8pt]
(0,4)--(0,0);

\draw [quivarrow,shorten <=5pt,shorten >=8pt]
(0,-4)--(0,0);

\draw [quivarrow,shorten <=8pt,shorten >=5pt]
(0,0)--(4,0);

\draw [quivarrow,shorten <=8pt,shorten >=5pt]
(0,0)--(-4,0);

\draw [quivarrow,shorten <=5pt,shorten >=5pt]
(4,0)--(0,-4);

\draw [quivarrow,shorten <=5pt,shorten >=5pt]
(-4,0)--(0,-4);

\draw [quivarrow,shorten <=5pt,shorten >=5pt]
(4,0)--(0,4);

\draw [quivarrow,shorten <=5pt,shorten >=5pt]
(-4,0)--(0,4);


\draw  plot[smooth] 
coordinates {(4,-2) (2,-2) (0,-2) (-2,0) (-2,2) (-2,4)}
[postaction=decorate,decoration={markings,
mark= at position 0.1 with \goodarrow,
mark= at position 0.3 with \goodarrow,
mark= at position 0.5 with \goodarrow,
mark= at position 0.7 with \goodarrow,
mark= at position 0.9 with \goodarrow
}];

\draw  plot[smooth] 
coordinates {(-4,2) (-2,2) (0,2) (2,0) (2,-2) (2,-4)}
[postaction=decorate,decoration={markings,
mark= at position 0.1 with \goodarrow,
mark= at position 0.3 with \goodarrow,
mark= at position 0.5 with \goodarrow,
mark= at position 0.7 with \goodarrow,
mark= at position 0.9 with \goodarrow}];

\draw  plot[smooth] 
coordinates {(-4,-2) (-2,-2) (0,-2) (2,0) (2,2) (2,4)}
[postaction=decorate,decoration={markings,
mark= at position 0.1 with \goodarrow,
mark= at position 0.3 with \goodarrow,
mark= at position 0.5 with \goodarrow,
mark= at position 0.7 with \goodarrow,
mark= at position 0.9 with \goodarrow}];

\draw  plot[smooth] 
coordinates {(4,2) (2,2) (0,2) (-2,0) (-2,-2) (-2,-4)}
[postaction=decorate,decoration={markings,
mark= at position 0.1 with \goodarrow,
mark= at position 0.3 with \goodarrow,
mark= at position 0.5 with \goodarrow,
mark= at position 0.7 with \goodarrow,
mark= at position 0.9 with \goodarrow}];

\end{scope}


\draw [doublearrow] (5.5,0) -- (6.5,0);

\end{tikzpicture}
\]
\caption{Geometric exchange and the corresponding operation on the quiver.}
\label{fig:geomex}
\end{figure}
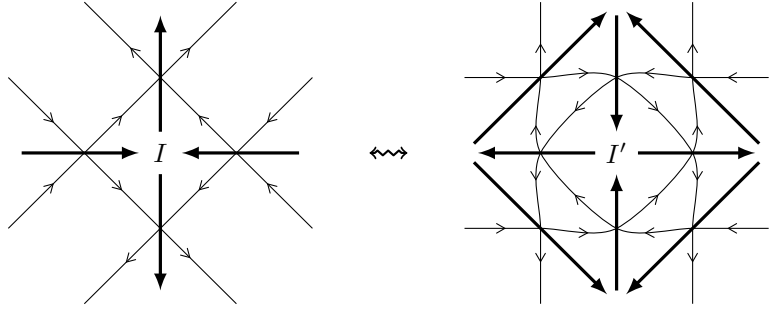

Notice that the labels of vertices do not change except at the chosen vertex. The label $I'$ is precisely the only $k$-element set which is not $I$
which makes the collection of labels noncrossing.
The effect on the corresponding quiver is almost Fomin-Zelevinsky mutation. The step of removing any new 2-cycles must be replaced as follows: remove any new 2-cycle consisting of non-boundary arrows, then 
for every 2-cycle consisting of a boundary arrow and a non-boundary arrow, remove the boundary arrow and treat the non-boundary arrow as a new boundary arrow. This corresponds to the effect of applying a 
boundary untwisting move, as opposed to a ``normal'' untwisting move (cf.~\cite[Lemma 12.1]{BKM16}).
This mutation rule also coincides with mutation of ice quivers with potential presented in \cite{Pre18}. 
If we restrict our attention to the quiver $\underline Q$, this difference disappears (since arrows between frozen vertices are not arrows in $\underline Q$).

By the above discussion, the notions of mutation of Postnikov diagrams (i.e.~geometric exchange), of cluster tilting objects in $\underline{\on{CM}}(\hat B)$, 
and of quivers with potential all correspond to each other when they make sense. We remark that sometimes mutation of a cluter tilting object 
in $\mathcal B$ will produce a cluster tilting object which does not lie in $\mathcal B$, and that will happen precisely when geometric exchange is not possible (because 
the chosen vertex does not have valency 4).
The correspondence between mutation of cluster tilting objects and quivers with potential is a widespread phenomenon, see for instance \cite{BIRS11}.

In particular, we can read off mutation of cluster tilting objects in $\on{CM}(\hat B)$ (respectively in $\underline{\on{CM}}(\hat B)$) from the Postnikov diagram $D$, from the quiver $Q$, or from the collection $\mathbb I$.
In Figures \ref{fig:mutpost39} and \ref{fig:mutpostquiv39}, we illustrate the geometric exchange at $134$ of the Postnikov diagram of Figure \ref{fig:post39} and the corresponding mutation of the quiver with potential.
Vertex $134$ is mutated to $245$. 
We can deduce that mutation is transitive on cluster tilting objects that lie in $\mathcal B$:
\begin{figure}
\[
\begin{tikzpicture}[baseline=(bb.base), quivarrow/.style={black, -latex, very thick}
]

\newcommand{\goodarrow}{\arrow{angle 60}}
\newcommand{\bstart}{130} 
\newcommand{\ninth}{40} 
\newcommand{\qstart}{150} 
\newcommand{\radius}{4.8cm} 
\newcommand{\eps}{11pt} 
\newcommand{\dotrad}{0.07cm} 

\path (0,0) node (bb) {};


\draw (0,0) circle(\radius) [thick,dashed];

\foreach \n in {1,...,9}
{ \coordinate (b\n) at (\bstart-\ninth*\n:\radius);
  \draw (\bstart-\ninth*\n:\radius+\eps) node {$\n$}; }
  

\foreach \n in {1,2,3,4,5,6,7,8,9} {\draw (b\n) circle(\dotrad) [fill=black];}


\draw  plot[smooth]
coordinates {(b9) (b3)}
[ postaction=decorate, decoration={markings,
  mark= at position 0.2 with \goodarrow, mark= at position 0.3 with \goodarrow,
mark= at position 0.4 with \goodarrow,mark= at position 0.5 with \goodarrow, mark= at position 0.8 with \goodarrow}];
\draw  plot[smooth]
coordinates {(b3) (b6)}
[ postaction=decorate, decoration={markings,
  mark= at position 0.2 with \goodarrow, mark= at position 0.4 with \goodarrow,
mark= at position 0.6 with \goodarrow,mark= at position 0.73 with \goodarrow, mark= at position 0.9 with \goodarrow}];
\draw  plot[smooth]
coordinates {(b6) (b9)}
[ postaction=decorate, decoration={markings,
  mark= at position 0.2 with \goodarrow, mark= at position 0.3 with \goodarrow,
mark= at position 0.4 with \goodarrow,mark= at position 0.5 with \goodarrow, mark= at position 0.8 with \goodarrow}];
\draw  plot[smooth] coordinates {(b7)(230:\radius*25/40) (-30:.25*\radius) (70:\radius*25/40) (b1)}
[ postaction=decorate, decoration={markings,
  mark= at position 0.1 with \goodarrow, mark= at position 0.29 with \goodarrow,
  mark= at position 0.42 with \goodarrow, mark= at position 0.5 with \goodarrow,
  mark= at position 0.6 with \goodarrow, mark= at position 0.69 with \goodarrow,
  mark= at position 0.79 with \goodarrow, mark= at position 0.85 with \goodarrow,
  mark= at position 0.92 with \goodarrow }];
\draw  plot[smooth] coordinates {(b1) (110:\radius*25/40)(-150:\radius*.25) (-50:\radius*25/40)(b4)}
[ postaction=decorate, decoration={markings,
  mark= at position 0.1 with \goodarrow, mark= at position 0.29 with \goodarrow,
  mark= at position 0.42 with \goodarrow, mark= at position 0.5 with \goodarrow,
  mark= at position 0.6 with \goodarrow, mark= at position 0.65 with \goodarrow,
  mark= at position 0.69 with \goodarrow, mark= at position 0.73 with \goodarrow,
  mark= at position 0.85 with \goodarrow }];
\draw  plot[smooth] coordinates {(b4)(-10:\radius*25/40) (90:\radius*.25) (190:\radius*25/40)(b7)}
[ postaction=decorate, decoration={markings,
  mark= at position 0.1 with \goodarrow, mark= at position 0.29 with \goodarrow,
  mark= at position 0.42 with \goodarrow, mark= at position 0.5 with \goodarrow,
  mark= at position 0.6 with \goodarrow, mark= at position 0.69 with \goodarrow,
  mark= at position 0.79 with \goodarrow, mark= at position 0.85 with \goodarrow,
  mark= at position 0.92 with \goodarrow }];
\draw  plot[smooth] coordinates {(b2) (80:\radius*23/40)(30:\radius*7/40) (-25:\radius*.38) (-100:\radius*.6)(b5)}
[ postaction=decorate, decoration={markings,
  mark= at position 0.13 with \goodarrow, mark= at position 0.22 with \goodarrow,
  mark= at position 0.3 with \goodarrow, mark= at position 0.44 with \goodarrow,
  mark= at position 0.53 with \goodarrow, mark= at position 0.63 with \goodarrow,
 mark= at position 0.75 with \goodarrow, mark = at position 0.9 with \goodarrow}];
 \draw  plot[smooth] coordinates {(b5) (-80:\radius*.5)(-40:\radius*.4) (-90:\radius*7/40) (-145:\radius*26/40)(b8)}
 [ postaction=decorate, decoration={markings,
  mark= at position 0.13 with \goodarrow, mark= at position 0.23 with \goodarrow,
  mark= at position 0.32 with \goodarrow, mark= at position 0.4 with \goodarrow,
  mark= at position 0.48 with \goodarrow,mark = at position 0.6 with \goodarrow, mark= at position 0.72 with \goodarrow,
 mark= at position 0.86 with \goodarrow}];
 \draw  plot[smooth] coordinates {(b8) (200:\radius*23/40)(150:\radius*7/40) (95:\radius*26/40)(b2)}
 [ postaction=decorate, decoration={markings,
  mark= at position 0.14 with \goodarrow, mark= at position 0.25 with \goodarrow,
  mark= at position 0.35 with \goodarrow, mark= at position 0.47 with \goodarrow,
  mark= at position 0.6 with \goodarrow, mark= at position 0.72 with \goodarrow,
 mark= at position 0.86 with \goodarrow}];


\foreach \n/\m/\r in {1/789/0.88, 2/891/0.88, 3/912/0.88, 4/123/0.85, 5/234/0.89, 6/345/0.85, 7/456/0.85, 8/567/.89, 9/678/.87}
{ \draw (\qstart-\ninth*\n:\r*\radius) node (q\m) {$\m$}; }

\foreach \m/\a/\r in {179/88/0.6 , 467/208/0.6, 178/117/0.4, 124/-20/.45, 457/237/.42 , 147/0/0, 127/65/.38, 145/-60/.31, 478/185/.38, 245/-89/.55}
{ \draw (\a:\r*\radius) node (q\m) {$\m$}; }


\end{tikzpicture}
\]
\caption{The geometric exchange at $134$ of the Postnikov diagram of Figure \ref{fig:post39}.}
\label{fig:mutpost39}
\end{figure}
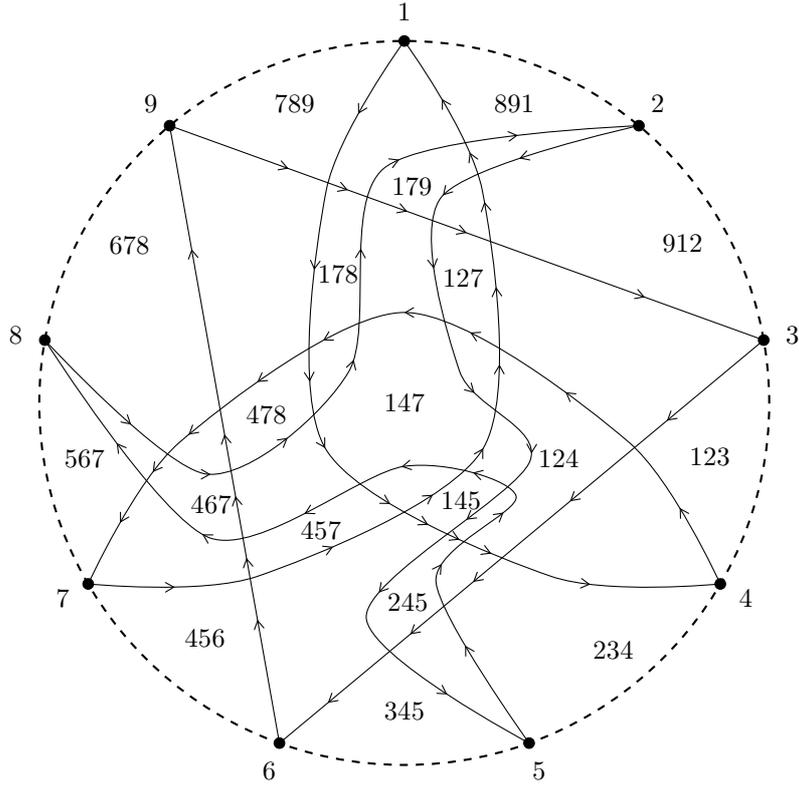

\begin{figure}
\[
\begin{tikzpicture}[baseline=(bb.base), quivarrow/.style={black, -latex, very thick}
]

\newcommand{\goodarrow}{\arrow{angle 60}}
\newcommand{\bstart}{130} 
\newcommand{\ninth}{40} 
\newcommand{\qstart}{150} 
\newcommand{\radius}{4.8cm} 
\newcommand{\eps}{11pt} 
\newcommand{\dotrad}{0.07cm} 

\path (0,0) node (bb) {};


\draw (0,0) circle(\radius) [thick,dashed];

\foreach \n in {1,...,9}
{ \coordinate (b\n) at (\bstart-\ninth*\n:\radius);
  \draw (\bstart-\ninth*\n:\radius+\eps) node {$\n$}; }
  

\foreach \n in {1,2,3,4,5,6,7,8,9} {\draw (b\n) circle(\dotrad) [fill=black];}


\draw  plot[smooth]
coordinates {(b9) (b3)}
[ postaction=decorate, decoration={markings,
  mark= at position 0.2 with \goodarrow, mark= at position 0.3 with \goodarrow,
mark= at position 0.4 with \goodarrow,mark= at position 0.5 with \goodarrow, mark= at position 0.8 with \goodarrow}];
\draw  plot[smooth]
coordinates {(b3) (b6)}
[ postaction=decorate, decoration={markings,
  mark= at position 0.2 with \goodarrow, mark= at position 0.4 with \goodarrow,
mark= at position 0.6 with \goodarrow,mark= at position 0.73 with \goodarrow, mark= at position 0.9 with \goodarrow}];
\draw  plot[smooth]
coordinates {(b6) (b9)}
[ postaction=decorate, decoration={markings,
  mark= at position 0.2 with \goodarrow, mark= at position 0.3 with \goodarrow,
mark= at position 0.4 with \goodarrow,mark= at position 0.5 with \goodarrow, mark= at position 0.8 with \goodarrow}];
\draw  plot[smooth] coordinates {(b7)(230:\radius*25/40) (-30:.25*\radius) (70:\radius*25/40) (b1)}
[ postaction=decorate, decoration={markings,
  mark= at position 0.1 with \goodarrow, mark= at position 0.29 with \goodarrow,
  mark= at position 0.42 with \goodarrow, mark= at position 0.5 with \goodarrow,
  mark= at position 0.6 with \goodarrow, mark= at position 0.69 with \goodarrow,
  mark= at position 0.79 with \goodarrow, mark= at position 0.85 with \goodarrow,
  mark= at position 0.92 with \goodarrow }];
\draw  plot[smooth] coordinates {(b1) (110:\radius*25/40)(-150:\radius*.25) (-50:\radius*25/40)(b4)}
[ postaction=decorate, decoration={markings,
  mark= at position 0.1 with \goodarrow, mark= at position 0.29 with \goodarrow,
  mark= at position 0.42 with \goodarrow, mark= at position 0.5 with \goodarrow,
  mark= at position 0.6 with \goodarrow, mark= at position 0.65 with \goodarrow,
  mark= at position 0.69 with \goodarrow, mark= at position 0.73 with \goodarrow,
  mark= at position 0.85 with \goodarrow }];
\draw  plot[smooth] coordinates {(b4)(-10:\radius*25/40) (90:\radius*.25) (190:\radius*25/40)(b7)}
[ postaction=decorate, decoration={markings,
  mark= at position 0.1 with \goodarrow, mark= at position 0.29 with \goodarrow,
  mark= at position 0.42 with \goodarrow, mark= at position 0.5 with \goodarrow,
  mark= at position 0.6 with \goodarrow, mark= at position 0.69 with \goodarrow,
  mark= at position 0.79 with \goodarrow, mark= at position 0.85 with \goodarrow,
  mark= at position 0.92 with \goodarrow }];
\draw  plot[smooth] coordinates {(b2) (80:\radius*23/40)(30:\radius*7/40) (-25:\radius*.38) (-100:\radius*.6)(b5)}
[ postaction=decorate, decoration={markings,
  mark= at position 0.13 with \goodarrow, mark= at position 0.22 with \goodarrow,
  mark= at position 0.3 with \goodarrow, mark= at position 0.44 with \goodarrow,
  mark= at position 0.53 with \goodarrow, mark= at position 0.63 with \goodarrow,
 mark= at position 0.75 with \goodarrow, mark = at position 0.9 with \goodarrow}];
 \draw  plot[smooth] coordinates {(b5) (-80:\radius*.5)(-40:\radius*.4) (-90:\radius*7/40) (-145:\radius*26/40)(b8)}
 [ postaction=decorate, decoration={markings,
  mark= at position 0.13 with \goodarrow, mark= at position 0.23 with \goodarrow,
  mark= at position 0.32 with \goodarrow, mark= at position 0.4 with \goodarrow,
  mark= at position 0.48 with \goodarrow,mark = at position 0.6 with \goodarrow, mark= at position 0.72 with \goodarrow,
 mark= at position 0.86 with \goodarrow}];
 \draw  plot[smooth] coordinates {(b8) (200:\radius*23/40)(150:\radius*7/40) (95:\radius*26/40)(b2)}
 [ postaction=decorate, decoration={markings,
  mark= at position 0.14 with \goodarrow, mark= at position 0.25 with \goodarrow,
  mark= at position 0.35 with \goodarrow, mark= at position 0.47 with \goodarrow,
  mark= at position 0.6 with \goodarrow, mark= at position 0.72 with \goodarrow,
 mark= at position 0.86 with \goodarrow}];


\foreach \n/\m/\r in {1/789/0.88, 2/891/0.88, 3/912/0.88, 4/123/0.85, 5/234/0.89, 6/345/0.85, 7/456/0.85, 8/567/.89, 9/678/.87}
{ \draw (\qstart-\ninth*\n:\r*\radius) node (q\m) {$\m$}; }

\foreach \m/\a/\r in {179/88/0.6 , 467/208/0.6, 178/117/0.4, 124/-20/.45, 457/237/.42 , 147/0/0, 127/65/.38, 145/-60/.31, 478/185/.38, 245/-89/.55}
{ \draw (\a:\r*\radius) node (q\m) {$\m$}; }


\foreach \t/\h in { 145/147,127/147, 147/178, 457/467, 457/145, 145/245, 124/145, 147/124, 245/124,
478/147, 147/457, 124/127, 179/127, 178/179, 178/478, 467/478}
{ \draw [quivarrow] (q\t) edge (q\h);}

 \end{tikzpicture}
\]
\caption{The quiver $\mu_{134}(\underline Q)$, where $Q$ is the quiver of Figure \ref{fig:quiv39}.}
\label{fig:mutpostquiv39}
\end{figure}
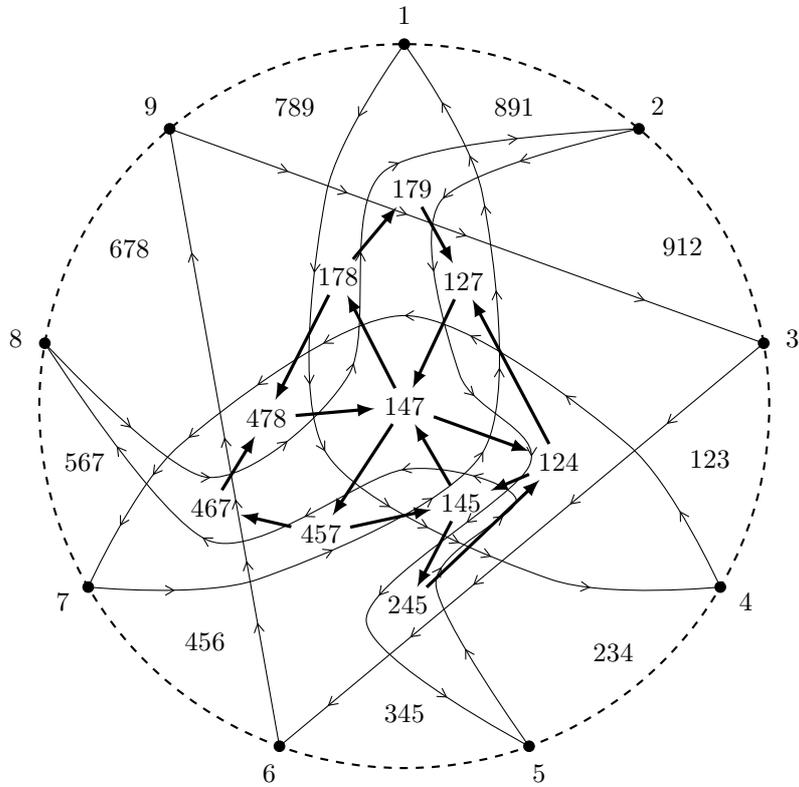

\begin{thm}\label{thm:transitive}
  \cite[Theorem 13.4]{Pos06} Any two reduced $(k,n)$-Postnikov diagrams are related by a sequence of geometric exchange, twisting and untwisting moves.
\end{thm}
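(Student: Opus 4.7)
The plan is to exploit Theorem~\ref{thm:classnoncross} to reduce the statement to a purely combinatorial assertion about maximal noncrossing collections. Given two reduced $(k,n)$-Postnikov diagrams $D$ and $D'$, set $\mathbb I = \mathbb I(D)$ and $\mathbb I' = \mathbb I(D')$. By the discussion immediately preceding Figure~\ref{fig:geomex}, performing a geometric exchange at a non-boundary vertex $I$ of valency $4$ in $Q(D)$, followed by untwisting and boundary-untwisting moves to restore reducedness, corresponds to replacing $I$ in $\mathbb I$ by the unique $I'\neq I$ such that $(\mathbb I\setminus\{I\})\cup\{I'\}$ is again a maximal noncrossing collection. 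It therefore suffices to prove that the exchange graph on maximal noncrossing collections in $\binom{[n]}{k}$ is connected.

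To do this I would fix a convenient reference collection $\mathbb I_0$ that admits a transparent Postnikov diagram, for instance the ``rectangular'' collection appearing in \cite{Sco06} and \cite[\S 9]{OPS15} whose associated diagram is built from a regular grid of strands. The goal reduces to showing that every $\mathbb I$ can be transformed into $\mathbb I_0$ by a sequence of exchanges. I would proceed by induction on a suitable progress measure, for example $d(\mathbb I) = |\mathbb I\triangle \mathbb I_0|$, or a more refined lexicographic statistic on the multiset of labels.

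The main obstacle is the inductive step: given $\mathbb I\neq \mathbb I_0$, one must exhibit an element $I\in \mathbb I\setminus \mathbb I_0$ that is geometrically exchangeable (i.e.~labels a quadrilateral region of $D$, equivalently corresponds to a valency-$4$ vertex of $\underline Q$) and whose exchange strictly decreases $d$. To locate such $I$, I would work with the CW-complex $\Sigma(\mathbb I)$ of \cite[\S 9]{OPS15} that is already invoked in the proof of Lemma~\ref{lem:I}: the 2-cells of $\Sigma(\mathbb I)$ correspond to the regions of $D$, with quadrilateral 2-cells corresponding exactly to the mutable vertices of $\underline Q$. By comparing $\Sigma(\mathbb I)$ and $\Sigma(\mathbb I_0)$ in the plane and following a path of regions where they differ, one can produce a quadrilateral region whose exchanged label $I'$ lies ``closer'' to $\mathbb I_0$ in the chosen order. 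Verifying that one is never stuck, that is, that at least one admissible exchange always makes progress, is the combinatorial heart of the argument; this is essentially what is carried out in \cite[\S 13]{Pos06}, and for the purposes of this paper we appeal to that reference.
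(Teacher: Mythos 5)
Note first that the paper contains no proof of Theorem~\ref{thm:transitive}: the statement is imported verbatim from \cite[Theorem 13.4]{Pos06}, so there is no internal argument to compare yours against. Judged as a self-contained proof, your proposal has a genuine gap, and in fact a circularity. The reduction itself is fine: by Theorem~\ref{thm:classnoncross} together with the uniqueness (up to twisting/untwisting equivalence) of the reduced diagram realizing a given collection, the theorem does follow from connectedness of the exchange graph on maximal noncrossing collections in $\binom{[n]}{k}$. But the inductive step you describe---that from any $\mathbb I\neq \mathbb I_0$ one can always find a quadrilateral (valency-$4$) region whose exchange strictly decreases a measure such as $|\mathbb I\mathbin\triangle \mathbb I_0|$---is precisely the hard content, and you do not establish it. There is no known greedy argument of this kind: it is not clear that a mutable label in $\mathbb I\setminus\mathbb I_0$ always exists, nor that any available exchange makes monotone progress, and no simple statistic is known to work; this is why the purity and connectivity results for weakly separated collections resisted proof for so long.

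Worse, your appeal to \cite[\S 13]{Pos06} for this step inverts the logical order of the literature. Postnikov's argument in \S 13 is carried out directly on reduced plabic graphs (equivalently, Postnikov diagrams): he proves the move-equivalence statement of Theorem~\ref{thm:transitive} itself, by an inductive reduction of an arbitrary reduced graph to a canonical representative, not by working with weakly separated collections. It is \cite{OPS15} who subsequently deduce the collection-side statements (including mutation-connectedness of maximal weakly separated collections) \emph{from} Postnikov's move-equivalence, via the dictionary of Theorem~\ref{thm:classnoncross}. So in your framing the cited source proves the conclusion you are trying to establish, not the combinatorial lemma you reduced it to; read literally, the proposal assumes the theorem to prove the theorem. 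Stripped of the unproven induction, what remains is exactly what the paper does---cite \cite{Pos06}---with an intermediate reduction that is neither needed for that purpose nor justified on its own.
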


\begin{cor}
  Any two basic cluster tilting objects in $\on{CM}(\hat B)$ (or $\underline{\on{CM}}(\hat B)$) that lie in $\mathcal B$ are related by a sequence of mutations.
\end{cor}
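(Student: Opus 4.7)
The plan is to reduce the claim to the combinatorial statement of Theorem \ref{thm:transitive} via the dictionary between cluster tilting objects in $\mathcal B$ and reduced $(k,n)$-Postnikov diagrams that has been developed in this section.

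First I would fix two basic cluster tilting objects $T_1, T_2$ in $\on{CM}(\hat B)$ lying in $\mathcal B$. By Proposition \ref{prop:correspond}, each $T_i$ is of the form $T_i = \bigoplus_{I \in \mathbb I_i} L_I$ for a unique maximal noncrossing collection $\mathbb I_i \subseteq \binom{[n]}{k}$, and this maximal noncrossing collection arises as $\mathbb I_i = \mathbb I(D_i)$ for some reduced $(k,n)$-Postnikov diagram $D_i$. By Theorem \ref{thm:transitive}, $D_1$ and $D_2$ are related by a finite sequence of geometric exchange, twisting, and untwisting moves, taking place through a chain of (not necessarily reduced) Postnikov diagrams.

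Next I would argue that only the geometric exchanges contribute to mutation. Twisting and untwisting moves produce equivalent Postnikov diagrams and so do not alter the set of region labels $\mathbb I$, hence do not change the associated cluster tilting object. For a geometric exchange at a vertex $I \in \mathbb I$ with valency $4$, the discussion preceding the theorem (together with the description in \cite[Theorem 1.4]{OPS15}) identifies the new label $I'$ as the unique $k$-subset such that $(\mathbb I \setminus \{I\}) \cup \{I'\}$ is again a maximal noncrossing collection; by Proposition \ref{prop:correspond}, the corresponding cluster tilting object is obtained from $\bigoplus_{J \in \mathbb I} L_J$ by replacing the summand $L_I$ by $L_{I'}$, and by uniqueness of mutation in the 2-Calabi-Yau category $\underline{\on{CM}}(\hat B)$ this is exactly the mutation of the cluster tilting object at $L_I$. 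Thus the sequence of geometric exchanges from $D_1$ to $D_2$ lifts to a sequence of mutations from $T_1$ to $T_2$ in $\on{CM}(\hat B)$, and passing to the quotient yields the statement in $\underline{\on{CM}}(\hat B)$ as well.

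The only real subtlety is verifying that each intermediate Postnikov diagram in the chain provided by Theorem \ref{thm:transitive} can be reduced without losing the track of the corresponding cluster tilting object, but this is taken care of by the fact that reduction consists of untwisting and boundary untwisting moves, both of which preserve the associated maximal noncrossing collection (and the associated cluster tilting object) up to the already-accounted-for effects on boundary arrows. With this in hand, the corollary follows immediately.
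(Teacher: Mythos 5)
Your proposal is correct and follows exactly the paper's (implicit) argument: the corollary is stated there as an immediate consequence of Theorem \ref{thm:transitive} together with the preceding discussion identifying geometric exchange with mutation of cluster tilting objects via Proposition \ref{prop:correspond}. You merely spell out the details the paper leaves to the reader (twisting/untwisting moves preserve the label collection, and uniqueness of the exchange partner in $\on{CM}(\hat B)$ identifies each geometric exchange with a mutation), so there is no substantive difference.
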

\begin{rmk}
  Given two cluster tilting objects as above $T, T'$, one can go from $T$ to $T'$ via a sequence of quiver mutations \emph{at vertices of valency 4}. Applying arbitrary mutations to the quiver can cause indecomposable summands of rank $\geq 2$ to appear in the cluster tilting object.
\end{rmk}

In \cite{HI11b}, the authors discuss the concept of \emph{planar mutation}, which is a more restrictive notion than that of quiver mutation. It has the property of preserving planarity. Their 
definition allows mutation at internal vertices of valency 4, or at boundary vertices of valency at most 4.
Mutation at an internal vertex of valency 4 of $\underline Q$ is precisely what is allowed by geometric exchange, but for boundary vertices the 
situation is different. Namely, we can mutate at the boundary vertices of $\underline Q$ if and only if they have valency 4 as vertices of $Q$, which is a stronger condition than having valency at most 4 in $\underline Q$.

\section{Self-injective cluster tilting objects in $\underline{\on{CM}}(B)$}\label{sec:main}

We are now ready to state our main result. In this section, let $D$ be a reduced $(k,n)$-Postnikov diagram.

\begin{lemma}
  \label{lem:T}
  The Postnikov diagram $D$ is symmetric if and only if $\,_\Phi T\cong T$ as left $B$-modules.
\end{lemma}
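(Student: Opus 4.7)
The plan is to combine Definition~\ref{def:twist}, which identifies the action of $\Phi$-twisting on the rank $1$ modules, with Lemma~\ref{lem:I}, which translates symmetry of $D$ into the combinatorial statement $\mathbb I = \mathbb I + k$. The key observation is that twisting is compatible with the direct sum decomposition $T = \bigoplus_{I\in\mathbb I}L_I$, so $\,_\Phi T$ decomposes as $\bigoplus_{I\in\mathbb I}\,_\Phi L_I$ and, using the canonical isomorphism $\,_\Phi L_I \cong L_{I-k}$ (obtained from Definition~\ref{def:twist} by relabelling), we get $\,_\Phi T \cong \bigoplus_{I\in\mathbb I}L_{I-k} = \bigoplus_{J\in\mathbb I - k}L_J$.

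For the forward direction, if $D$ is symmetric then Lemma~\ref{lem:I} gives $\mathbb I = \mathbb I + k$, equivalently $\mathbb I = \mathbb I - k$, and the isomorphism $\,_\Phi T \cong T$ follows immediately from the above decomposition.

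For the reverse direction, I would apply the Krull--Schmidt property in $\on{CM}(B)$. Each $L_I$ has rank $1$ and is therefore indecomposable, and by \cite[Proposition 5.2]{JKS16} two modules $L_I, L_J$ are isomorphic only when $I = J$. So if $\,_\Phi T \cong T$, comparing the two direct sum decompositions forces $\mathbb I - k = \mathbb I$, and Lemma~\ref{lem:I} concludes that $D$ is symmetric.

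The only minor subtlety, and the one step worth being careful about, is the identification $\,_\Phi L_I \cong L_{I-k}$: Definition~\ref{def:twist} states $L_I \cong \,_\Phi L_{I+k}$, so one needs to shift indices and rewrite this as $\,_\Phi L_I \cong L_{I-k}$ before summing over $\mathbb I$. Beyond that, the argument is purely formal.
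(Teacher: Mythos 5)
Your proposal is correct and follows essentially the same route as the paper: both directions proceed by decomposing $\,_\Phi T$ summandwise via the canonical isomorphism of Definition~\ref{def:twist} (the paper writes it as $L_I\cong\,_\Phi L_{I+k}$, you as $\,_\Phi L_I\cong L_{I-k}$, an equivalent reindexing) and then matching index sets through Lemma~\ref{lem:I}. Your only addition is to make explicit, via indecomposability of the rank~$1$ modules and \cite[Proposition 5.2]{JKS16}, the uniqueness-of-decomposition step that the paper's proof asserts without comment.
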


\begin{proof}
  Assume that $D$ is symmetric.
  As left $B$-modules, we have 
  \begin{align*}
    T = \bigoplus_{I\in \mathbb I} L_I  \cong\bigoplus_{I\in \mathbb I}\,_\Phi L_{I+k} = \bigoplus_{I\in \mathbb I+k}\,_\Phi L_{I} = \bigoplus_{I\in \mathbb I}\,_\Phi L_{I} = \,_\Phi T
  \end{align*}
  where we have used the isomorphism of Definition \ref{def:twist} and Lemma \ref{lem:I}.
  On the other hand, there can be an isomorphism $\bigoplus_{I\in \mathbb I}\,_\Phi L_{I+k} \cong \bigoplus_{I\in \mathbb I+k}\,_\Phi L_{I+k} $ only if $\mathbb I = \mathbb I+k$, which 
  by Lemma \ref{lem:I} implies that $D$ is symmetric.
\end{proof}

In other words, $\mathcal F T \cong T$, and recall that $\mathcal F = [2]$ on $\mathcal B\subseteq \underline{\on{CM}}(\hat B)$. 
If we call $\varphi:T \to \mathcal F T$ the canonical isomorphism with components $\varphi_I: L_I\to \,_{\Phi}(L_{I+k})$ as in Definition \ref{def:twist}, 
then there is an automorphism $\psi$ of $\on{End}_B(T)$ given by $$\psi: a\mapsto \varphi\circ a\circ \varphi^{-1}.$$
By Remark \ref{rmk:projinj}, $\mathcal F$ sends projectives to projectives, so the automorphism $\psi$ induces an automorphism of $\underline{\on{End}}_B(T)$, which we still denote by $\psi$.

\begin{thm}\label{thm:main}
  Let $D$ be a reduced $(k,n)$-Postnikov diagram. Then $D$ is symmetric if and only if the $B$-module $T\in \underline{\on{CM}}(B)$ is self-injective. In this case the Nakayama permutation
  is given by $\sigma(I) = I-k$, and a
  Nakayama automorphism given by $\psi$.\end{thm}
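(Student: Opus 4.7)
The plan is to bootstrap from the results of Section~\ref{sec:selfinj} using Theorem~\ref{thm:shift} and Lemma~\ref{lem:T}. First, by Corollary~\ref{cor:endiso} we may freely replace $B$ by $\hat B$ throughout, since self-injectivity of $\Lambda$ and the Nakayama data are intrinsic to $\Lambda\cong \underline{\on{End}}_{\hat B}(T)$. This puts us in the 2-Calabi-Yau triangulated category $\underline{\on{CM}}(\hat B)$, where the machinery of Section~\ref{sec:selfinj} applies, and lets us identify the Serre functor $[2]$ on $\mathcal B$ with the twist $\mathcal F$.

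The first equivalence is then immediate. Combining Theorem~\ref{thm:shift} with Lemma~\ref{lem:T}, we have that $D$ is symmetric if and only if $\,_\Phi T\cong T$ as left $\hat B$-modules, if and only if $\mathcal F T\cong T$ in $\underline{\on{CM}}(\hat B)$, if and only if $T\cong T[2]$. By Proposition~\ref{prop:selfinj}(1), this is exactly the condition that $T$ be self-injective.

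Assuming $D$ is symmetric, the Nakayama permutation follows from Proposition~\ref{prop:selfinj}(2): $\sigma$ is characterised by $L_{\sigma(I)}\cong L_I[2]$. Applying Theorem~\ref{thm:shift} and Definition~\ref{def:twist} (which gives $L_J\cong \,_\Phi L_{J+k}$, equivalently $\,_\Phi L_I\cong L_{I-k}$), one computes $L_{\sigma(I)}\cong \mathcal F L_I = \,_\Phi L_I\cong L_{I-k}$, so $\sigma(I)=I-k$. For the Nakayama automorphism, the isomorphism $\varphi:T\to T[2]$ needed as input to Proposition~\ref{prop:naka} is obtained by taking the componentwise canonical maps $\varphi_I:L_I\to \,_\Phi L_{I+k}$ from Definition~\ref{def:twist}, combining them using $\mathbb I = \mathbb I+k$ to build $T\to \mathcal F T$, and transporting along the isomorphism $\mathcal F\cong [2]$ of Theorem~\ref{thm:shift}. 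Proposition~\ref{prop:naka} then delivers a Nakayama automorphism via the rule $\lambda\mapsto \varphi[-2]\circ \lambda[-2]\circ\varphi[-2]^{-1}$; since $\mathcal F$ acts as the identity on morphisms, this coincides on the nose (or at worst up to the inner automorphism ambiguity inherent in any Nakayama automorphism) with $a\mapsto \varphi\circ a\circ \varphi^{-1}$, which is the $\psi$ defined in the paragraph preceding the theorem.

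The main obstacle I anticipate is the final step: carefully reconciling the abstract Nakayama automorphism supplied by Proposition~\ref{prop:naka}, which is built from the triangulated shift $[-2]$ in $\underline{\on{CM}}(\hat B)$, with the concrete $\psi$ defined at the level of $B$-module endomorphisms via the canonical $\varphi_I$. This amounts to tracing through the identification $\mathcal F\cong[2]$ on morphisms and verifying compatibility with the direct sum decomposition $T=\bigoplus_{I\in \mathbb I}L_I$. The remaining pieces are essentially assembly of results already established in the earlier sections.
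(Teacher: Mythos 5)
Your proposal is correct and follows essentially the same route as the paper's own proof: pass to $\underline{\on{CM}}(\hat B)$ via Corollary~\ref{cor:endiso}, combine Lemma~\ref{lem:T}, Theorem~\ref{thm:shift} and Proposition~\ref{prop:selfinj} to get the equivalence and the permutation $\sigma(I)=I-k$, and identify $\psi$ with the automorphism of Proposition~\ref{prop:naka}. The reconciliation you flag as the main obstacle is exactly what the paper's explicit proof that $\mathcal F\cong[2]$ on $\mathcal B$ already supplies (the commutative diagrams there show $g_{I'+k,I+k}[2]=g_{I'I}$, i.e.\ the identification is compatible with morphisms and with the decomposition $T=\bigoplus_{I\in\mathbb I}L_I$), so the paper can assert on the nose that $\psi$ is the map of Proposition~\ref{prop:naka}, and no gap remains.
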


\begin{proof}
  By Corollary~\ref{cor:endiso}, $T$ is self-injective as a $B$-module if and only if it is self-injective as a $\hat B$-module. Thus we can work in $\underline{\on{CM}}(\hat B)$, 
  where we have a 2-Calabi-Yau structure.
  By Lemma \ref{lem:T} and Theorem \ref{thm:shift}, we have that $D$ is symmetric if and only if
    $T\cong \,_{\Phi}T\cong T[2]$.
    Moreover, $T\cong T[2]$ if and only if $T$ is self-injective, by Proposition \ref{prop:selfinj}.
    In this case, we have $L_I[2]\cong L_{I-k}$, which gives $\sigma:I\mapsto I-k$.
    Since $[2] = \mathcal F$ on the modules $L_I$, the map $\psi$ we have defined is exactly the map in the statement of Proposition \ref{prop:naka}.
    Thus we conclude that $\psi$ is a Nakayama automorphism of $\underline{\on{End}}_{\hat B}(T)\cong\underline{\on{End}}_{ B}(T) $.
\end{proof}

\begin{cor}\label{cor:main2}
  Let $D$ be a reduced $(k,n)$-Postnikov diagram. Then $D$ is symmetric if and only if
   $(\underline Q, \underline W)$ is a self-injective quiver with potential.
  In this case, the Nakayama permutation is $\sigma(I) = I-k$, induced by rotation by $2\pi k/n$, and a Nakayama automorphism of $\wp(\underline Q, \underline W)$ is given by $\Psi$ (see Section \ref{sec:post}).
\end{cor}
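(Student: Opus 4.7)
The plan is to transfer Theorem \ref{thm:main} directly via the algebra isomorphism $\underline g : \Lambda = \wp(\underline Q, \underline W) \to \underline{\on{End}}_B(T)$, which I obtain by combining Proposition \ref{prop:complete} (so that completions play no role) with Lemma \ref{lem:descent}. Self-injectivity of $\Lambda$ as an algebra is exactly the condition that $(\underline Q, \underline W)$ is a self-injective quiver with potential; via $\underline g$, this is equivalent to $T$ being self-injective in $\underline{\on{CM}}(B)$, which by Theorem \ref{thm:main} holds if and only if $D$ is symmetric. Since the vertices of $\underline Q$ correspond under $\underline g$ to the summands $L_I$ of $T$, the Nakayama permutation transfers directly to $\sigma(I) = I-k$ on the vertex set of $\underline Q$.

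The remaining content of the statement is that $\Psi$ is a Nakayama automorphism of $\Lambda$, and for this I would verify that $\underline g \circ \Psi = \psi \circ \underline g$, where $\psi$ is the Nakayama automorphism of $\underline{\on{End}}_B(T)$ from Theorem \ref{thm:main}. Since both $\Psi$ and $\psi$ are algebra automorphisms, it suffices to check agreement on generators, namely on the idempotents $e_I$ and on (representatives of) the arrows $a : I \to J$ of $\underline Q$. The argument in the proof of Lemma \ref{lem:I} shows that in a symmetric diagram $\rho$ maps strand $i$ to strand $i+k$, and hence sends the region labelled $I$ to the region labelled $I+k$; so $\Psi(e_I) = e_{I+k}$, and $\Psi$ sends an arrow $a : I \to J$ to the rotated arrow $\rho(a) : I+k \to J+k$, whose image under $\underline g$ is $g_{J+k, I+k}$.

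It remains to show that $\psi$ acts in the same way. From the definition $\psi(f) = \varphi \circ f \circ \varphi^{-1}$ with $\varphi = \bigoplus_I \varphi_I$ and the canonical isomorphism $\varphi_I : L_I \to \,_\Phi L_{I+k}$ of Definition \ref{def:twist}, a direct computation gives $\psi(\on{id}_{L_I}) = \on{id}_{L_{I+k}}$ once $\,_\Phi L_{I+k}$ is identified with $L_{I+k}$. For an arrow one obtains $\psi(g_{JI}) = \varphi_J \circ g_{JI} \circ \varphi_I^{-1}$; interpreting $\varphi_I$ as the operation of shifting columns of the lattice diagram by $k$, and recalling that $g_{JI}$ is defined as a minimal lattice-diagram embedding, this composition is precisely the minimal embedding of the shifted diagrams, namely $g_{J+k,I+k}$. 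The main obstacle is the careful bookkeeping of the canonical identifications $\,_\Phi L_{I+k} \leftrightarrow L_{I+k}$ in this last calculation; but once these are pinned down, $\underline g \circ \Psi = \psi \circ \underline g$ holds on generators and hence on all of $\Lambda$, and $\Psi$ is a Nakayama automorphism of $\Lambda$ as claimed.
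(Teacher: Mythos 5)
Your proposal is correct and takes essentially the same route as the paper: transport Theorem~\ref{thm:main} along the isomorphism $\wp(\underline Q, \underline W)\cong \underline{\on{End}}_B(T)$ coming from Theorem~\ref{thm:bkm} and Lemma~\ref{lem:descent} (with Proposition~\ref{prop:complete} disposing of the completion), and then match $\psi$ with $\Psi$ by computing both on the idempotents $e_I$ and the arrows $g_{JI}$ via the canonical lattice-diagram identifications $\varphi_I$. The only divergence is notational: you find $\psi(e_I)=e_{I+k}$, which is the correct and internally consistent convention (a Nakayama automorphism acts on idempotents by $\sigma^{-1}$, and $\sigma(I)=I-k$), while the paper's proof writes the same identification with the shift as $I\mapsto I-k$ on both $\psi$ and $\rho$; either way the conclusion $\psi=\Psi$ is the same.
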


\begin{proof}
  By Theorem \ref{thm:bkm} and Lemma \ref{lem:descent} we have that $\underline{\on{End}}_B(T)\cong \wp(\underline Q, \underline W)$. 
  The functor $\mathcal F$ on $\underline {\on{CM}}(B)$ sends $L_I$ to $L_{I-k}$ and $g_{JI}$ to $g_{J-k, I-k}$, so the automorphism 
  $\psi$ of $\Lambda$ defined by twisting with the canonical isomorphism $\varphi:T\to \mathcal F T$ corresponds to the quiver automorphism sending vertex $I$ to $I-k$ and an arrow $I\to J$ to an 
  arrow $I-k\to J-k$. Thus the action of $\psi$ on the quiver coincides with that of $\rho$, which in turn is the action on $\wp(\underline Q, \underline W)$ of the automorphism $\Psi$.
\end{proof}

\begin{rmk}
	Strictly speaking, the rotation $\rho$ acts on $D$ only if $D$ is chosen appropriately in the equivalence class modulo isotopy. In other words, the Nakayama automorphism acts by $\rho$ on $Q$ provided that $Q$ is embedded in the plane with the embedding of Lemma~\ref{lem:I}.
\end{rmk}

\begin{rmk}
  The automorphism $\psi$ of $\on{End}_B(T)$ induces the automorphism $\Phi^{-1}$ on $B^{opp}\subseteq \on{End}_B(T)$.
\end{rmk}

\begin{defin}\cite[Definition 4.1]{HI11b}.
  Let $(\underline Q, \underline W)$ be a self-injective quiver with potential constructed from a reduced $(k,n)$-Postnikov diagram. In this case, the Nakayama permutation acts on vertices by $\sigma: I\mapsto I-k$.
  Call $(I) = \left\{ \sigma ^j(I)\ |\ j\in \Z \right\}$ the orbit of $I$.
  Suppose that there are no arrows between any two vertices in $(I)$. Then we define the \emph{mutation at $(I)$} $\mu_{(I)}(\underline Q, \underline W)$ to be the composition of mutations at the vertices in $(I)$, applied to $\underline Q$.
  It is well defined since, by the assumption, it does not depend on the order of composition.
\end{defin}

The following theorem is stated in greater generality in \cite{HI11b}.
\begin{thm}
  \cite[Theorem 4.2]{HI11b}.
  If $(\underline Q, \underline W)$ is self-injective and $I$ satisfies the above condition (allowing $\mu_{(I)}$ to be defined), then $\mu_{(I)}(\underline Q, \underline W)$ is a self-injective quiver with potential with the same Nakayama permutation.
\end{thm}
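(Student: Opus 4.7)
The plan is to lift the statement to the level of cluster tilting objects in $\underline{\on{CM}}(\hat B)$, where self-injectivity of the endomorphism algebra becomes the geometric condition $T\cong T[2]$ (Proposition~\ref{prop:selfinj}), and to exploit the identification $[2]\cong \mathcal F$ of Theorem~\ref{thm:shift}. Under the correspondence between mutation of a cluster tilting object and mutation of its quiver with potential recalled at the end of Section~\ref{sec:tilting}, it suffices to produce a self-injective cluster tilting object $T'$ whose associated quiver with potential is $\mu_{(I)}(\underline Q, \underline W)$ and such that the Nakayama permutation is still induced by $\mathcal F$.

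First I take $T = \bigoplus_{J\in \mathbb I}L_J$, so that $\mathcal F T\cong T$ by Lemma~\ref{lem:T} combined with Theorem~\ref{thm:shift}. Because $\mathcal F L_J\cong L_{J-k} = L_{\sigma(J)}$, the autoequivalence $\mathcal F$ permutes the indecomposable summands of $T$ according to $\sigma$. Next I set $T':= \mu_{(I)}(T)$, obtained by composing the individual mutations at the summands $L_{\sigma^j(I)}$. The hypothesis that there are no arrows in $\underline Q$ between vertices in the orbit $(I)$ guarantees that mutation at $L_{\sigma^j(I)}$ does not alter the exchange triangle at $L_{\sigma^{j'}(I)}$ for $j\neq j'$, so the individual mutations commute and $T'$ is well-defined; its quiver with potential is $\mu_{(I)}(\underline Q, \underline W)$.

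The core step is to show $\mathcal F T'\cong T'$. Since $\mathcal F$ is a triangle autoequivalence of $\underline{\on{CM}}(\hat B)$, it commutes with mutation: applying $\mathcal F$ to the exchange triangle $X\to E\to X^*\to X[1]$ for a summand $X$ of a basic cluster tilting object $S$ produces the exchange triangle for $\mathcal F S$ at $\mathcal F X$, whence $\mathcal F\mu_X(S)\cong \mu_{\mathcal F X}(\mathcal F S)$. Iterating this over the orbit $(I)$ and using $\mathcal F T\cong T$ gives
\[
\mathcal F T' \;\cong\; \mu_{(\mathcal F I)}(\mathcal F T) \;\cong\; \mu_{(\sigma(I))}(T) \;=\; \mu_{(I)}(T) \;=\; T',
\]
where the penultimate equality uses $(\sigma(I))=(I)$ as subsets of $Q_0$. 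By Proposition~\ref{prop:selfinj}, $T'\cong T'[2]$ implies that $\underline{\on{End}}_{\hat B}(T')$ is self-injective with Nakayama permutation determined by the $\mathcal F$-action on summands, which still sends $L_J$ to $L_{J-k}$. Transporting back via Corollary~\ref{cor:endiso} and Lemma~\ref{lem:descent} yields the statement for $\mu_{(I)}(\underline Q, \underline W)$.

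The main obstacle is the naturality statement $\mathcal F\mu_X(S)\cong \mu_{\mathcal F X}(\mathcal F S)$: one must check that a triangle autoequivalence preserves the defining universal property of mutation, in particular sending a minimal right $\on{add}(S/X)$-approximation to a minimal right $\on{add}(\mathcal F S/\mathcal F X)$-approximation. Everything else reduces to bookkeeping about orbits of $\sigma$ and the previously established dictionary between mutation of cluster tilting objects in $\mathcal B$ and mutation of quivers with potential.
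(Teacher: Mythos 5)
Your categorical core is sound, but two points need to be made. First, the paper does not actually prove this theorem: it cites \cite[Theorem 4.2]{HI11b} for the general statement and only observes that, when $I$ is \emph{mutable} (an internal vertex of valency $4$, so that geometric exchange applies), the result follows immediately from Corollary~\ref{cor:main2}: applying geometric exchange along a $\rho$-orbit of such vertices takes a symmetric Postnikov diagram to another symmetric Postnikov diagram, whose quiver with potential is therefore again self-injective with Nakayama permutation induced by rotation. Your argument --- $\mathcal F T\cong T$ by Lemma~\ref{lem:T}, equivariance of mutation under the triangle autoequivalence $\mathcal F\cong [2]$ of Theorem~\ref{thm:shift}, hence $\mathcal F T'\cong T'$, then Proposition~\ref{prop:selfinj} --- is a genuinely different, purely categorical derivation of that same special case, and the steps you single out (autoequivalences preserve minimal approximations and exchange triangles; mutations along an arrow-free orbit commute) are indeed routine. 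In that restricted scope the two approaches buy different things: the paper's combinatorial argument is immediate once Corollary~\ref{cor:main2} is in place, while yours stays inside $\underline{\on{CM}}(\hat B)$ and makes the role of the Serre functor explicit.

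Second, there is a genuine gap if you intend the theorem in the generality in which it is stated, namely for any orbit $(I)$ with no arrows among its vertices and no valency restriction. Your final step --- ``its quiver with potential is $\mu_{(I)}(\underline Q,\underline W)$'', i.e.\ $\underline{\on{End}}_{\hat B}(T')\cong \hat\wp(\mu_{(I)}(\underline Q,\underline W))$ --- appeals to the dictionary recalled at the end of Section~\ref{sec:tilting}, but that dictionary (geometric exchange) is established there only for internal vertices of valency $4$. If some vertex $\sigma^j(I)$ is not of this form, the mutated cluster tilting object acquires indecomposable summands of rank $\geq 2$ and leaves $\mathcal B$, and nothing in the paper identifies its stable endomorphism algebra with the DWZ-mutated Jacobian algebra; you would have to invoke the general mutation theorem of \cite{BIRS11} in the 2-Calabi-Yau category $\underline{\on{CM}}(\hat B)$ and verify its hypotheses (no loops or 2-cycles arising at the mutated vertices, inductively along the orbit), which is substantially more than bookkeeping and is precisely why the paper defers to \cite{HI11b} here. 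Relatedly, even in that case you would obtain the completed Jacobian algebra $\hat\wp$; the identification with the non-completed one via Proposition~\ref{prop:complete} is only available when the mutated quiver with potential again comes from a Postnikov diagram, i.e.\ in the mutable case. Note finally that you flag the naturality $\mathcal F\mu_X(S)\cong\mu_{\mathcal F X}(\mathcal F S)$ as the main obstacle; in fact that part is standard, and the transfer from cluster tilting objects back to quivers with potential is where the real work lies.
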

In our setting, this result can be deduced immediately from Corollary \ref{cor:main2} if $I$ is mutable.
Indeed, applying geometric exchange along a mutable orbit of $\rho$ produces another symmetric $(k,n)$-Postnikov diagram, so the corresponding quiver is again self-injective with the same permutation.

\begin{rmk}
  By Theorem \ref{thm:transitive}, any two symmetric reduced $(k,n)$-Postnikov diagrams are related by a sequence of geometric exchanges. However, we do not know whether they are related by a sequence
  of geometric exchanges along Nakayama orbits.
\end{rmk}

\section{Cuts of self-injective quivers with potential}\label{sec:cuts}

In this section we study the 2-representation finite algebras one can get from a self-injective quiver with potential. We want to use the results of \cite{HI11b}, so again we need our Jacobian algebras to be completed.

\begin{defin}
  For a quiver with potential $(R, P)$, a \emph{cut} is a set of arrows which contains exactly one arrow from every cycle in $P$.
  The quiver $(R, P)$ \emph{has enough cuts} if every arrow is contained in a cut.

We can define a grading on $\hat\wp(R, P)$ by giving degree 1 to the arrows in a cut $C$ of $R$, since by definition the potential is then homogeneous of degree 1. The degree 0 
part of $\hat\wp(R, P)$ is denoted $\hat\wp(R, P)_C$ and called the \emph{truncated Jacobian algebra} of $\hat \wp(R, P)$ associated to $C$.
\end{defin}

Recall that an algebra is called $2$-representation finite if it has global dimension at most 2 and admits a cluster tilting module (cf.~\cite{Iya08}).
One reason to look at truncated Jacobian algebras is the following result (see for instance \cite{HI11b} for the definition of $3$-preprojective algebras).
\begin{thm}\cite[Theorem 3.11]{HI11b}.
  For any self-injective quiver with potential $(R, P)$ and cut $C$, the truncated Jacobian algebra $\hat\wp(R, P)_C$ is 2-representation finite. All basic 
  2-representation finite algebras arise in this way. Moreover, the 3-preprojective algebra of $\hat\wp(R, P)_C$ is isomorphic to $\hat\wp(R, P)$.
\end{thm}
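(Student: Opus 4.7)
The plan is to equip $\hat\wp(R,P)$ with a non-negative $\Z$-grading by assigning degree $1$ to arrows in $C$ and degree $0$ to all other arrows. Since $C$ meets every cycle occurring in $P$ in exactly one arrow, $P$ becomes homogeneous of degree $1$, each cyclic derivative $\partial_a W$ is homogeneous, and the Jacobian ideal is graded. Writing $\Lambda_C$ for the degree $0$ subalgebra and $V$ for the degree $1$ component, I would first check that the natural map of $\Lambda_C$-bimodule tensor algebras
\begin{equation*}
T_{\Lambda_C}(V) \longrightarrow \hat\wp(R,P)
\end{equation*}
is an isomorphism: surjectivity is clear from the grading, and injectivity follows because the defining relations $\partial_a W = 0$ with $a\in C$ are already concentrated in degree $0$, so they account for all Jacobian relations of positive degree.

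The key step is then to compute, for every vertex $i$, a minimal projective resolution of the simple $\Lambda_C$-module $S_i$. The potential provides a canonical candidate complex of length two: the first differential is right multiplication by the arrows of $R_1\setminus C$ ending at $i$, and the second is right multiplication by the cut relations $\partial_a W$ for $a\in C$ with target $i$. Exactness, which simultaneously yields $\on{gldim}\Lambda_C \leq 2$ and identifies $V$ with $\on{Ext}^2_{\Lambda_C}(D\Lambda_C, \Lambda_C)$ as a $\Lambda_C$-bimodule, should follow from the self-injectivity of $\hat\wp(R,P)$: the Nakayama permutation pairs up the incoming cut arrows at $i$ with the outgoing cut arrows at $\sigma^{-1}(i)$ so that the top and bottom of the resolution close up symmetrically. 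Combined with the tensor algebra isomorphism above, this gives $\Pi_3(\Lambda_C)\cong \hat\wp(R,P)$.

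To produce a cluster tilting module I would take $M = \bigoplus_{i\geq 0}\tau_2^{-i}(\Lambda_C)$, where $\tau_2 = D\on{Ext}^2_{\Lambda_C}(-,\Lambda_C)$. Using the identification $\Pi_3(\Lambda_C)\cong \hat\wp(R,P)$ together with the Nakayama automorphism of the latter, one sees that this orbit is finite: a high enough power of $\tau_2$ brings $\Lambda_C$ back to itself up to the twist by the Nakayama permutation of $\hat\wp(R,P)$. The Ext-vanishing required for cluster tiltingness is then read off directly from the length-$2$ resolutions of the simples obtained in the previous step.

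For the reverse direction, given a basic 2-representation finite algebra $\Lambda$, I would construct $\Pi_3(\Lambda)$ and realise it as a Jacobian algebra $\hat\wp(R,W)$, where the arrows of $R$ that contribute to the tensor algebra grading naturally form a cut $C$ with $\Lambda \cong \hat\wp(R,W)_C$. The hard step will be showing that this $(R,W)$ is self-injective. This I would approach by placing $\Pi_3(\Lambda)$ inside the generalised 2-cluster category $\mathcal C_\Lambda$ as the endomorphism ring of a canonical cluster tilting object $T$, and then invoking Proposition~\ref{prop:selfinj}: the 2-Calabi-Yau property of $\mathcal C_\Lambda$ combined with higher Auslander-Reiten theory for $\Lambda$ gives the shift invariance $T\cong T[2]$, which translates into self-injectivity of $\on{End}(T)\cong \hat\wp(R,W)$ and simultaneously pins down its Nakayama permutation.
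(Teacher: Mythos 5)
The paper itself contains no proof of this statement: it is imported verbatim from \cite[Theorem 3.11]{HI11b}, so the only benchmark is the proof in that reference. Your outline does reproduce the architecture of that proof --- the cut grading, the description of $\hat\wp(R,P)$ as a tensor algebra over its degree-zero part $\Lambda_C$, the identification of that tensor algebra with the $3$-preprojective algebra via length-two projective resolutions of the simples, the candidate cluster tilting module $\bigoplus_{i\geq 0}\tau_2^{-i}(\Lambda_C)$, and, for the converse, Keller's realisation of the $3$-preprojective algebra of an algebra of global dimension at most $2$ as a Jacobian algebra whose ``relation arrows'' form a cut recovering the original algebra.

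However, the two load-bearing steps are asserted rather than proved, and the justifications you give would not survive scrutiny. First, in the tensor-algebra step your injectivity reasoning is wrong as stated: the Jacobian ideal also has the degree-one generators $\partial_b W$ for $b\notin C$, which are not consequences of the degree-zero ones; the isomorphism $T_{\Lambda_C}(V)\to\hat\wp(R,P)$ holds for the softer reason that the ideal is generated in degrees $\leq 1$. The genuinely hard content, which you compress into ``should follow from self-injectivity,'' is that $\on{gldim}\Lambda_C\leq 2$ and $V\cong\on{Ext}^2_{\Lambda_C}(D\Lambda_C,\Lambda_C)$, i.e.\ exactness of the degree-zero truncations of the canonical complexes attached to the potential; in \cite{HI11b} this is precisely the assertion that \emph{every} cut of a self-injective quiver with potential is algebraic, and it is the main technical point of the theorem. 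Your proposed mechanism --- ``the Nakayama permutation pairs up the incoming cut arrows at $i$ with the outgoing cut arrows at $\sigma^{-1}(i)$'' --- cannot work as stated, because the Nakayama automorphism need not preserve the cut (cuts fixed by it are exactly the special $l$-homogeneous case, cf.\ Section~\ref{sec:cuts}); the actual argument exploits self-injectivity through graded socle/duality considerations for the Jacobian algebra, not an arrow-by-arrow pairing. (A minor slip in the same step: the relations contributing to the resolution of $S_i$ are $\partial_a W$ for cut arrows $a$ with \emph{source} $i$, since $\partial_a W$ is a sum of paths from $t(a)$ to $s(a)$.) Second, the Ext-vanishing making $M=\bigoplus_{i\geq 0}\tau_2^{-i}(\Lambda_C)$ cluster tilting cannot be ``read off directly'' from the length-two resolutions of the simples: what is actually used is the criterion that an algebra of global dimension at most $2$ is $2$-representation finite if and only if its $3$-preprojective algebra is finite-dimensional self-injective, which rests on Iyama's higher Auslander--Reiten duality and is a theorem in its own right. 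Your converse direction then obtains $T\cong T[2]$ in the $2$-cluster category ``by higher AR theory,'' but by Proposition~\ref{prop:selfinj} this shift-invariance is \emph{equivalent} to self-injectivity of the endomorphism algebra, which is exactly the other half of the same criterion --- so as written you are invoking the substance of what needs proof. Once that criterion is proved or properly cited, both halves of the theorem do follow along the lines you sketch, essentially as in \cite{HI11b}.
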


Now if $D$ is a symmetric Postnikov diagram, by Theorem \ref{thm:main} the associated Jacobian algebra 
$ \Lambda = \wp(\underline Q, \underline W)$  is self-injective, and by Proposition \ref{prop:complete} it is isomorphic to $\hat\wp(\underline Q, \underline W)$.
So for any cut $C$ of $(\underline Q, \underline W)$ the truncated Jacobian algebra $\Lambda_C$ is 2-representation finite with 3-preprojective algebra isomorphic to $\Lambda$.

We need some notation for regions determined by Postnikov diagrams. A \emph{boundary region} is a region whose boundary is alternating (ignoring the boundary of the disk) 
and has a piece of the boundary circle as part of its boundary. These are precisely the regions labeled by cyclic intervals. A \emph{cyclic boundary region} is a cyclic region which shares an edge with a boundary region.
On the level of Postnikov diagrams, a cut of $(\underline Q, \underline W)$ is a set $C$ of (non-boundary) crossings of strands such that
\begin{enumerate}
  \item for every crossing $c\in C$, the two cyclic regions adjacent to $c$ are not both cyclic boundary regions, and
  \item every cyclic region which is not a cyclic boundary region is adjacent to exactly one crossing in $C$.
\end{enumerate}
In Figure \ref{fig:cut39} we illustrate such a cut, and in Figure \ref{fig:quivcut39} we show the corresponding cut on the quiver $\underline Q$ (dotted arrows are arrows in the cut).
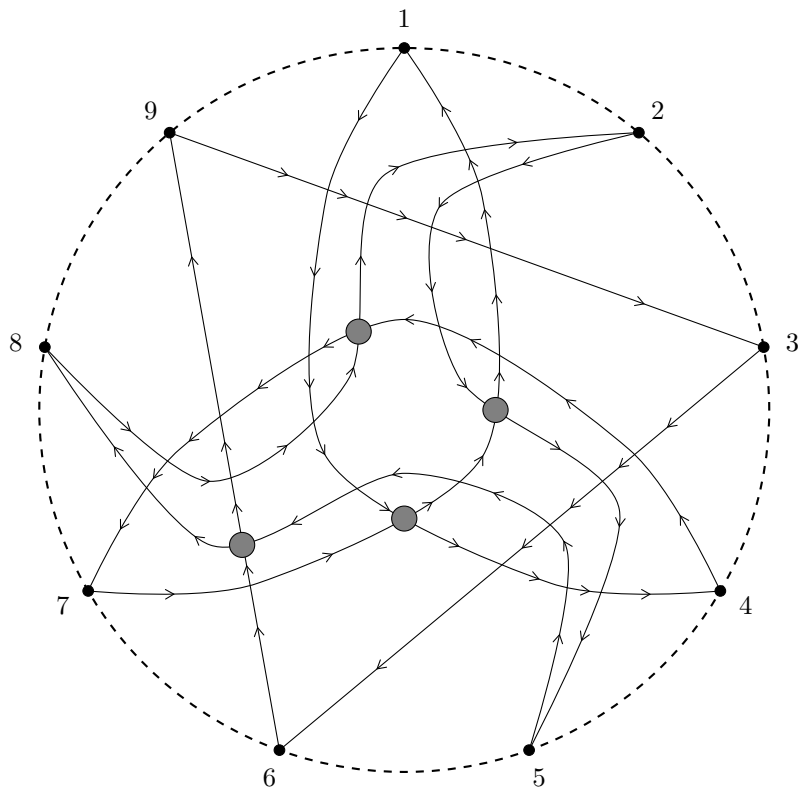
\begin{figure}
\[
\begin{tikzpicture}[baseline=(bb.base)]

\newcommand{\goodarrow}{\arrow{angle 60}}
\newcommand{\bstart}{130} 
\newcommand{\ninth}{40} 
\newcommand{\qstart}{150} 
\newcommand{\radius}{4.8cm} 
\newcommand{\eps}{11pt} 
\newcommand{\dotrad}{0.07cm} 

\path (0,0) node (bb) {};


\draw (0,0) circle(\radius) [thick,dashed];

\foreach \n in {1,...,9}
{ \coordinate (b\n) at (\bstart-\ninth*\n:\radius);
  \draw (\bstart-\ninth*\n:\radius+\eps) node {$\n$}; }
  

\foreach \n in {1,2,3,4,5,6,7,8,9} {\draw (b\n) circle(\dotrad) [fill=black];}


\draw  plot[smooth]
coordinates {(b9) (b3)}
[ postaction=decorate, decoration={markings,
  mark= at position 0.2 with \goodarrow, mark= at position 0.3 with \goodarrow,
mark= at position 0.4 with \goodarrow,mark= at position 0.5 with \goodarrow, mark= at position 0.8 with \goodarrow}];
\draw  plot[smooth]
coordinates {(b3) (b6)}
[ postaction=decorate, decoration={markings,
  mark= at position 0.2 with \goodarrow, mark= at position 0.3 with \goodarrow,
mark= at position 0.4 with \goodarrow,mark= at position 0.5 with \goodarrow, mark= at position 0.8 with \goodarrow}];
\draw  plot[smooth]
coordinates {(b6) (b9)}
[ postaction=decorate, decoration={markings,
  mark= at position 0.2 with \goodarrow, mark= at position 0.3 with \goodarrow,
mark= at position 0.4 with \goodarrow,mark= at position 0.5 with \goodarrow, mark= at position 0.8 with \goodarrow}];
\draw  plot[smooth] coordinates {(b7)(230:\radius*25/40) (-30:.25*\radius) (70:\radius*25/40) (b1)}
[ postaction=decorate, decoration={markings,
  mark= at position 0.1 with \goodarrow, mark= at position 0.29 with \goodarrow,
  mark= at position 0.42 with \goodarrow, mark= at position 0.5 with \goodarrow,
  mark= at position 0.6 with \goodarrow, mark= at position 0.69 with \goodarrow,
  mark= at position 0.79 with \goodarrow, mark= at position 0.85 with \goodarrow,
  mark= at position 0.92 with \goodarrow }];
\draw  plot[smooth] coordinates {(b1) (110:\radius*25/40)(-150:\radius*.25) (-50:\radius*25/40)(b4)}
[ postaction=decorate, decoration={markings,
  mark= at position 0.1 with \goodarrow, mark= at position 0.29 with \goodarrow,
  mark= at position 0.42 with \goodarrow, mark= at position 0.5 with \goodarrow,
  mark= at position 0.6 with \goodarrow, mark= at position 0.69 with \goodarrow,
  mark= at position 0.79 with \goodarrow, mark= at position 0.85 with \goodarrow,
  mark= at position 0.92 with \goodarrow }];
\draw  plot[smooth] coordinates {(b4)(-10:\radius*25/40) (90:\radius*.25) (190:\radius*25/40)(b7)}
[ postaction=decorate, decoration={markings,
  mark= at position 0.1 with \goodarrow, mark= at position 0.29 with \goodarrow,
  mark= at position 0.42 with \goodarrow, mark= at position 0.5 with \goodarrow,
  mark= at position 0.6 with \goodarrow, mark= at position 0.69 with \goodarrow,
  mark= at position 0.79 with \goodarrow, mark= at position 0.85 with \goodarrow,
  mark= at position 0.92 with \goodarrow }];
\draw  plot[smooth] coordinates {(b2) (80:\radius*23/40)(30:\radius*7/40) (-25:\radius*26/40)(b5)}
[ postaction=decorate, decoration={markings,
  mark= at position 0.14 with \goodarrow, mark= at position 0.25 with \goodarrow,
  mark= at position 0.35 with \goodarrow, mark= at position 0.47 with \goodarrow,
  mark= at position 0.6 with \goodarrow, mark= at position 0.72 with \goodarrow,
 mark= at position 0.86 with \goodarrow}];
 \draw  plot[smooth] coordinates {(b5) (-40:\radius*23/40)(-90:\radius*7/40) (-145:\radius*26/40)(b8)}
 [ postaction=decorate, decoration={markings,
  mark= at position 0.14 with \goodarrow, mark= at position 0.25 with \goodarrow,
  mark= at position 0.35 with \goodarrow, mark= at position 0.47 with \goodarrow,
  mark= at position 0.6 with \goodarrow, mark= at position 0.72 with \goodarrow,
 mark= at position 0.86 with \goodarrow}];
 \draw  plot[smooth] coordinates {(b8) (200:\radius*23/40)(150:\radius*7/40) (95:\radius*26/40)(b2)}
 [ postaction=decorate, decoration={markings,
  mark= at position 0.14 with \goodarrow, mark= at position 0.25 with \goodarrow,
  mark= at position 0.35 with \goodarrow, mark= at position 0.47 with \goodarrow,
  mark= at position 0.6 with \goodarrow, mark= at position 0.72 with \goodarrow,
 mark= at position 0.86 with \goodarrow}];


 \node[shape = circle, fill = gray, draw] at (120:.25*\radius) {};
 \node[shape = circle, fill = gray, draw] at (0:.25*\radius) {};
 \node[shape = circle, fill = gray, draw] at (-90:.3*\radius) {};
 \node[shape = circle, fill = gray, draw] at (220:.58*\radius) {};

 \end{tikzpicture}
\]
\caption{A cut on a symmetric $(3,9)$-Postnikov diagram.}
\label{fig:cut39}
\end{figure}
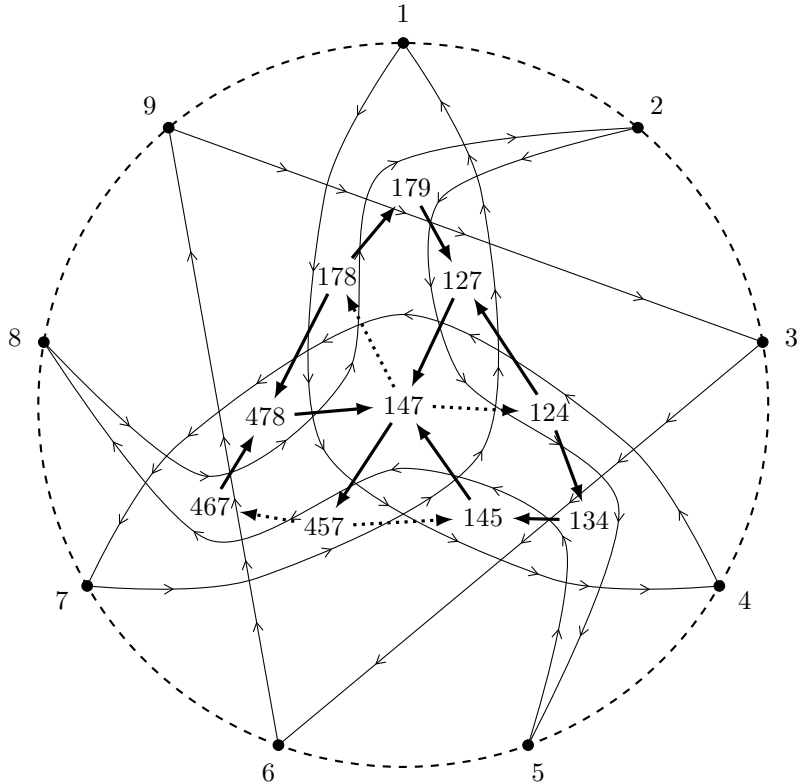
\begin{figure}
\[
\begin{tikzpicture}[baseline=(bb.base),
    quivarrow/.style={black, -latex, very thick},
    cutarrow/.style = {black, -latex,very thick, dotted}
  ]

\newcommand{\goodarrow}{\arrow{angle 60}}
\newcommand{\bstart}{130} 
\newcommand{\ninth}{40} 
\newcommand{\qstart}{150} 
\newcommand{\radius}{4.8cm} 
\newcommand{\eps}{11pt} 
\newcommand{\dotrad}{0.07cm} 

\path (0,0) node (bb) {};


\draw (0,0) circle(\radius) [thick,dashed];

\foreach \n in {1,...,9}
{ \coordinate (b\n) at (\bstart-\ninth*\n:\radius);
  \draw (\bstart-\ninth*\n:\radius+\eps) node {$\n$}; }
  

\foreach \n in {1,2,3,4,5,6,7,8,9} {\draw (b\n) circle(\dotrad) [fill=black];}


\draw  plot[smooth]
coordinates {(b9) (b3)}
[ postaction=decorate, decoration={markings,
  mark= at position 0.2 with \goodarrow, mark= at position 0.3 with \goodarrow,
mark= at position 0.4 with \goodarrow,mark= at position 0.5 with \goodarrow, mark= at position 0.8 with \goodarrow}];
\draw  plot[smooth]
coordinates {(b3) (b6)}
[ postaction=decorate, decoration={markings,
  mark= at position 0.2 with \goodarrow, mark= at position 0.3 with \goodarrow,
mark= at position 0.4 with \goodarrow,mark= at position 0.5 with \goodarrow, mark= at position 0.8 with \goodarrow}];
\draw  plot[smooth]
coordinates {(b6) (b9)}
[ postaction=decorate, decoration={markings,
  mark= at position 0.2 with \goodarrow, mark= at position 0.3 with \goodarrow,
mark= at position 0.4 with \goodarrow,mark= at position 0.5 with \goodarrow, mark= at position 0.8 with \goodarrow}];
\draw  plot[smooth] coordinates {(b7)(230:\radius*25/40) (-30:.25*\radius) (70:\radius*25/40) (b1)}
[ postaction=decorate, decoration={markings,
  mark= at position 0.1 with \goodarrow, mark= at position 0.29 with \goodarrow,
  mark= at position 0.42 with \goodarrow, mark= at position 0.5 with \goodarrow,
  mark= at position 0.6 with \goodarrow, mark= at position 0.69 with \goodarrow,
  mark= at position 0.79 with \goodarrow, mark= at position 0.85 with \goodarrow,
  mark= at position 0.92 with \goodarrow }];
\draw  plot[smooth] coordinates {(b1) (110:\radius*25/40)(-150:\radius*.25) (-50:\radius*25/40)(b4)}
[ postaction=decorate, decoration={markings,
  mark= at position 0.1 with \goodarrow, mark= at position 0.29 with \goodarrow,
  mark= at position 0.42 with \goodarrow, mark= at position 0.5 with \goodarrow,
  mark= at position 0.6 with \goodarrow, mark= at position 0.69 with \goodarrow,
  mark= at position 0.79 with \goodarrow, mark= at position 0.85 with \goodarrow,
  mark= at position 0.92 with \goodarrow }];
\draw  plot[smooth] coordinates {(b4)(-10:\radius*25/40) (90:\radius*.25) (190:\radius*25/40)(b7)}
[ postaction=decorate, decoration={markings,
  mark= at position 0.1 with \goodarrow, mark= at position 0.29 with \goodarrow,
  mark= at position 0.42 with \goodarrow, mark= at position 0.5 with \goodarrow,
  mark= at position 0.6 with \goodarrow, mark= at position 0.69 with \goodarrow,
  mark= at position 0.79 with \goodarrow, mark= at position 0.85 with \goodarrow,
  mark= at position 0.92 with \goodarrow }];
\draw  plot[smooth] coordinates {(b2) (80:\radius*23/40)(30:\radius*7/40) (-25:\radius*26/40)(b5)}
[ postaction=decorate, decoration={markings,
  mark= at position 0.14 with \goodarrow, mark= at position 0.25 with \goodarrow,
  mark= at position 0.35 with \goodarrow, mark= at position 0.47 with \goodarrow,
  mark= at position 0.6 with \goodarrow, mark= at position 0.72 with \goodarrow,
 mark= at position 0.86 with \goodarrow}];
 \draw  plot[smooth] coordinates {(b5) (-40:\radius*23/40)(-90:\radius*7/40) (-145:\radius*26/40)(b8)}
 [ postaction=decorate, decoration={markings,
  mark= at position 0.14 with \goodarrow, mark= at position 0.25 with \goodarrow,
  mark= at position 0.35 with \goodarrow, mark= at position 0.47 with \goodarrow,
  mark= at position 0.6 with \goodarrow, mark= at position 0.72 with \goodarrow,
 mark= at position 0.86 with \goodarrow}];
 \draw  plot[smooth] coordinates {(b8) (200:\radius*23/40)(150:\radius*7/40) (95:\radius*26/40)(b2)}
 [ postaction=decorate, decoration={markings,
  mark= at position 0.14 with \goodarrow, mark= at position 0.25 with \goodarrow,
  mark= at position 0.35 with \goodarrow, mark= at position 0.47 with \goodarrow,
  mark= at position 0.6 with \goodarrow, mark= at position 0.72 with \goodarrow,
 mark= at position 0.86 with \goodarrow}];


\foreach \m/\a/\r in {179/88/0.6 , 134/328/0.6, 467/208/0.6, 178/117/0.4, 124/-3/.4, 457/237/.4 , 147/0/0, 127/65/.38, 145/-55/.38, 478/185/.38}
{ \draw (\a:\r*\radius) node (q\m) {$\m$}; }

\foreach \t/\h in { 145/147, 134/145, 124/134, 127/147,  
478/147, 147/457, 124/127, 179/127, 178/179, 178/478, 467/478}
{ \draw [quivarrow] (q\t) edge (q\h);}

\draw [cutarrow] (q457) edge (q145);
\draw [cutarrow] (q457) edge (q467);
\draw [cutarrow] (q147) edge (q178);
\draw [cutarrow] (q147) edge (q124);

 \end{tikzpicture}
\]
\caption{A cut on the self-injective quiver with potential corresponding to the Postnikov diagram of Figure \ref{fig:cut39}.}
\label{fig:quivcut39}
\end{figure}

There is a notion of mutation of cuts that corresponds exactly to taking the quiver of the corresponding 2-APR tilt of $\Lambda$ (see \cite{HI11b} for details).
\begin{defin}\cite[Definition 6.10]{HI11b}.
  Let $(R, P)$ be a quiver with potential with a cut $C$. A vertex $x$ of $R$ is a \emph{strict source} if all arrows ending at $x$ belong to $C$ and all arrows 
  starting at $x$ do not belong to $C$. For a strict source $x$, call the \emph{cut-mutation} $\mu_x^+(C)$ of $R_1$ the set of arrows we get by removing all arrows ending at $x$ from $C$, and 
  adding all arrows starting at $x$ to $C$. Dually, we define a \emph{strict sink} and the cut-mutation $\mu_x^-(C)$.
\end{defin}
It is clear that cut-mutation transforms strict sources into strict sinks and vice-versa.

For a quiver with potential $(\underline Q, \underline W)$ constructed from a Postnikov diagram, strict sources and strict sinks are precisely 
alternating regions such that every second crossing (except those with a boundary region) on their boundary is contained in the cut. Cut-mutation consists of replacing the
crossings on the boundary of such alternating regions with their complement (again, ignoring the crossings with a boundary region).
In Figures \ref{fig:cutmut39} and \ref{fig:quivcutmut39} we illustrate $\mu_{457}^+(C)$ for the cut $C$ of Figures \ref{fig:cut39} and \ref{fig:quivcut39}.
\begin{figure}
\[
\begin{tikzpicture}[baseline=(bb.base)]

\newcommand{\goodarrow}{\arrow{angle 60}}
\newcommand{\bstart}{130} 
\newcommand{\ninth}{40} 
\newcommand{\qstart}{150} 
\newcommand{\radius}{4.8cm} 
\newcommand{\eps}{11pt} 
\newcommand{\dotrad}{0.07cm} 

\path (0,0) node (bb) {};


\draw (0,0) circle(\radius) [thick,dashed];

\foreach \n in {1,...,9}
{ \coordinate (b\n) at (\bstart-\ninth*\n:\radius);
  \draw (\bstart-\ninth*\n:\radius+\eps) node {$\n$}; }
  

\foreach \n in {1,2,3,4,5,6,7,8,9} {\draw (b\n) circle(\dotrad) [fill=black];}


\draw  plot[smooth]
coordinates {(b9) (b3)}
[ postaction=decorate, decoration={markings,
  mark= at position 0.2 with \goodarrow, mark= at position 0.3 with \goodarrow,
mark= at position 0.4 with \goodarrow,mark= at position 0.5 with \goodarrow, mark= at position 0.8 with \goodarrow}];
\draw  plot[smooth]
coordinates {(b3) (b6)}
[ postaction=decorate, decoration={markings,
  mark= at position 0.2 with \goodarrow, mark= at position 0.3 with \goodarrow,
mark= at position 0.4 with \goodarrow,mark= at position 0.5 with \goodarrow, mark= at position 0.8 with \goodarrow}];
\draw  plot[smooth]
coordinates {(b6) (b9)}
[ postaction=decorate, decoration={markings,
  mark= at position 0.2 with \goodarrow, mark= at position 0.3 with \goodarrow,
mark= at position 0.4 with \goodarrow,mark= at position 0.5 with \goodarrow, mark= at position 0.8 with \goodarrow}];
\draw  plot[smooth] coordinates {(b7)(230:\radius*25/40) (-30:.25*\radius) (70:\radius*25/40) (b1)}
[ postaction=decorate, decoration={markings,
  mark= at position 0.1 with \goodarrow, mark= at position 0.29 with \goodarrow,
  mark= at position 0.42 with \goodarrow, mark= at position 0.5 with \goodarrow,
  mark= at position 0.6 with \goodarrow, mark= at position 0.69 with \goodarrow,
  mark= at position 0.79 with \goodarrow, mark= at position 0.85 with \goodarrow,
  mark= at position 0.92 with \goodarrow }];
\draw  plot[smooth] coordinates {(b1) (110:\radius*25/40)(-150:\radius*.25) (-50:\radius*25/40)(b4)}
[ postaction=decorate, decoration={markings,
  mark= at position 0.1 with \goodarrow, mark= at position 0.29 with \goodarrow,
  mark= at position 0.42 with \goodarrow, mark= at position 0.5 with \goodarrow,
  mark= at position 0.6 with \goodarrow, mark= at position 0.69 with \goodarrow,
  mark= at position 0.79 with \goodarrow, mark= at position 0.85 with \goodarrow,
  mark= at position 0.92 with \goodarrow }];
\draw  plot[smooth] coordinates {(b4)(-10:\radius*25/40) (90:\radius*.25) (190:\radius*25/40)(b7)}
[ postaction=decorate, decoration={markings,
  mark= at position 0.1 with \goodarrow, mark= at position 0.29 with \goodarrow,
  mark= at position 0.42 with \goodarrow, mark= at position 0.5 with \goodarrow,
  mark= at position 0.6 with \goodarrow, mark= at position 0.69 with \goodarrow,
  mark= at position 0.79 with \goodarrow, mark= at position 0.85 with \goodarrow,
  mark= at position 0.92 with \goodarrow }];
\draw  plot[smooth] coordinates {(b2) (80:\radius*23/40)(30:\radius*7/40) (-25:\radius*26/40)(b5)}
[ postaction=decorate, decoration={markings,
  mark= at position 0.14 with \goodarrow, mark= at position 0.25 with \goodarrow,
  mark= at position 0.35 with \goodarrow, mark= at position 0.47 with \goodarrow,
  mark= at position 0.6 with \goodarrow, mark= at position 0.72 with \goodarrow,
 mark= at position 0.86 with \goodarrow}];
 \draw  plot[smooth] coordinates {(b5) (-40:\radius*23/40)(-90:\radius*7/40) (-145:\radius*26/40)(b8)}
 [ postaction=decorate, decoration={markings,
  mark= at position 0.14 with \goodarrow, mark= at position 0.25 with \goodarrow,
  mark= at position 0.35 with \goodarrow, mark= at position 0.47 with \goodarrow,
  mark= at position 0.6 with \goodarrow, mark= at position 0.72 with \goodarrow,
 mark= at position 0.86 with \goodarrow}];
 \draw  plot[smooth] coordinates {(b8) (200:\radius*23/40)(150:\radius*7/40) (95:\radius*26/40)(b2)}
 [ postaction=decorate, decoration={markings,
  mark= at position 0.14 with \goodarrow, mark= at position 0.25 with \goodarrow,
  mark= at position 0.35 with \goodarrow, mark= at position 0.47 with \goodarrow,
  mark= at position 0.6 with \goodarrow, mark= at position 0.72 with \goodarrow,
 mark= at position 0.86 with \goodarrow}];

 \draw (237:.4*\radius) node (q457) {$457$};


 \node[shape = circle, fill = gray, draw] at (120:.25*\radius) {};
 \node[shape = circle, fill = gray, draw] at (0:.25*\radius) {};
 \node[shape = circle, fill = gray, draw] at (240:.25*\radius) {};
 \node[shape = circle, fill = white ,draw] at (-90:.3*\radius) {};
 \node[shape = circle, fill = white, draw] at (220:.58*\radius) {};

 \end{tikzpicture}
\]
\caption{The cut-mutation $\mu_{457}^+(C)$ of the cut in Figure \ref{fig:cut39}.}
\label{fig:cutmut39}
\end{figure}
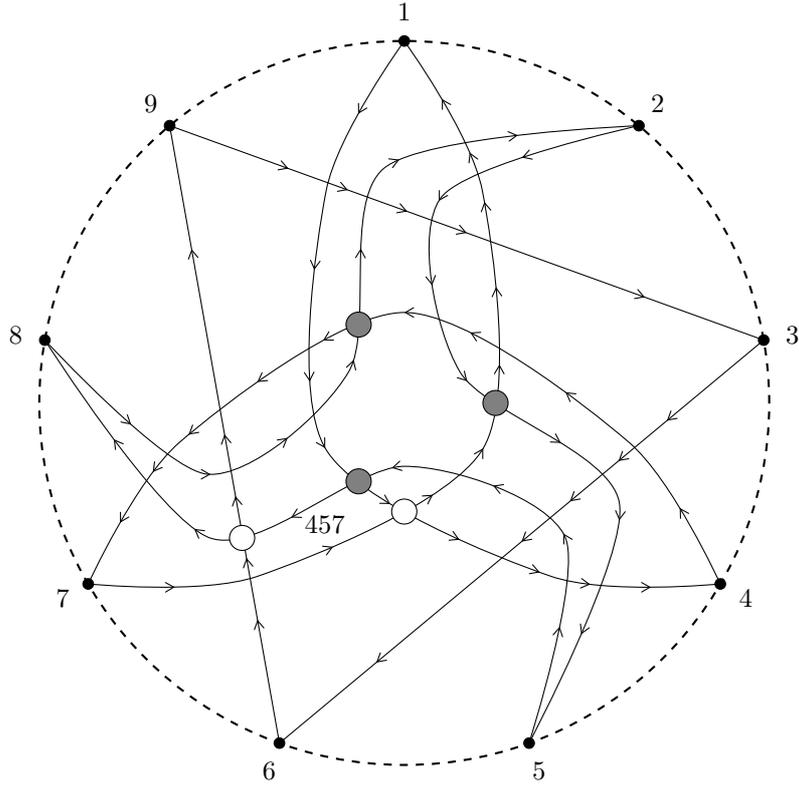
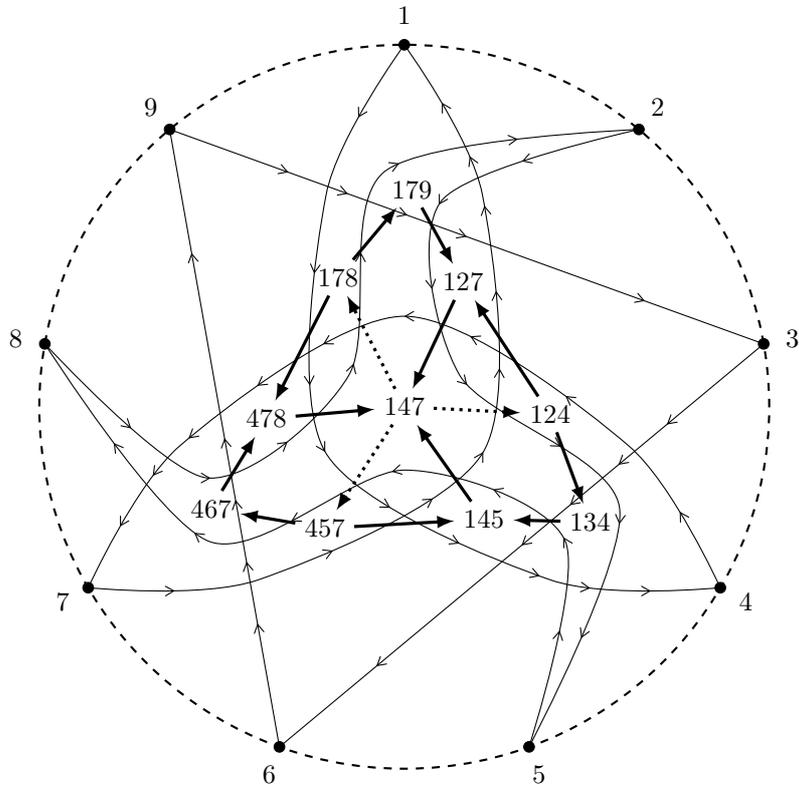
\begin{figure}
\[
\begin{tikzpicture}[baseline=(bb.base),
    quivarrow/.style={black, -latex, very thick},
    cutarrow/.style = {black, -latex,very thick, dotted}
  ]

\newcommand{\goodarrow}{\arrow{angle 60}}
\newcommand{\bstart}{130} 
\newcommand{\ninth}{40} 
\newcommand{\qstart}{150} 
\newcommand{\radius}{4.8cm} 
\newcommand{\eps}{11pt} 
\newcommand{\dotrad}{0.07cm} 

\path (0,0) node (bb) {};


\draw (0,0) circle(\radius) [thick,dashed];

\foreach \n in {1,...,9}
{ \coordinate (b\n) at (\bstart-\ninth*\n:\radius);
  \draw (\bstart-\ninth*\n:\radius+\eps) node {$\n$}; }
  

\foreach \n in {1,2,3,4,5,6,7,8,9} {\draw (b\n) circle(\dotrad) [fill=black];}


\draw  plot[smooth]
coordinates {(b9) (b3)}
[ postaction=decorate, decoration={markings,
  mark= at position 0.2 with \goodarrow, mark= at position 0.3 with \goodarrow,
mark= at position 0.4 with \goodarrow,mark= at position 0.5 with \goodarrow, mark= at position 0.8 with \goodarrow}];
\draw  plot[smooth]
coordinates {(b3) (b6)}
[ postaction=decorate, decoration={markings,
  mark= at position 0.2 with \goodarrow, mark= at position 0.3 with \goodarrow,
mark= at position 0.4 with \goodarrow,mark= at position 0.5 with \goodarrow, mark= at position 0.8 with \goodarrow}];
\draw  plot[smooth]
coordinates {(b6) (b9)}
[ postaction=decorate, decoration={markings,
  mark= at position 0.2 with \goodarrow, mark= at position 0.3 with \goodarrow,
mark= at position 0.4 with \goodarrow,mark= at position 0.5 with \goodarrow, mark= at position 0.8 with \goodarrow}];
\draw  plot[smooth] coordinates {(b7)(230:\radius*25/40) (-30:.25*\radius) (70:\radius*25/40) (b1)}
[ postaction=decorate, decoration={markings,
  mark= at position 0.1 with \goodarrow, mark= at position 0.29 with \goodarrow,
  mark= at position 0.42 with \goodarrow, mark= at position 0.5 with \goodarrow,
  mark= at position 0.6 with \goodarrow, mark= at position 0.69 with \goodarrow,
  mark= at position 0.79 with \goodarrow, mark= at position 0.85 with \goodarrow,
  mark= at position 0.92 with \goodarrow }];
\draw  plot[smooth] coordinates {(b1) (110:\radius*25/40)(-150:\radius*.25) (-50:\radius*25/40)(b4)}
[ postaction=decorate, decoration={markings,
  mark= at position 0.1 with \goodarrow, mark= at position 0.29 with \goodarrow,
  mark= at position 0.42 with \goodarrow, mark= at position 0.5 with \goodarrow,
  mark= at position 0.6 with \goodarrow, mark= at position 0.69 with \goodarrow,
  mark= at position 0.79 with \goodarrow, mark= at position 0.85 with \goodarrow,
  mark= at position 0.92 with \goodarrow }];
\draw  plot[smooth] coordinates {(b4)(-10:\radius*25/40) (90:\radius*.25) (190:\radius*25/40)(b7)}
[ postaction=decorate, decoration={markings,
  mark= at position 0.1 with \goodarrow, mark= at position 0.29 with \goodarrow,
  mark= at position 0.42 with \goodarrow, mark= at position 0.5 with \goodarrow,
  mark= at position 0.6 with \goodarrow, mark= at position 0.69 with \goodarrow,
  mark= at position 0.79 with \goodarrow, mark= at position 0.85 with \goodarrow,
  mark= at position 0.92 with \goodarrow }];
\draw  plot[smooth] coordinates {(b2) (80:\radius*23/40)(30:\radius*7/40) (-25:\radius*26/40)(b5)}
[ postaction=decorate, decoration={markings,
  mark= at position 0.14 with \goodarrow, mark= at position 0.25 with \goodarrow,
  mark= at position 0.35 with \goodarrow, mark= at position 0.47 with \goodarrow,
  mark= at position 0.6 with \goodarrow, mark= at position 0.72 with \goodarrow,
 mark= at position 0.86 with \goodarrow}];
 \draw  plot[smooth] coordinates {(b5) (-40:\radius*23/40)(-90:\radius*7/40) (-145:\radius*26/40)(b8)}
 [ postaction=decorate, decoration={markings,
  mark= at position 0.14 with \goodarrow, mark= at position 0.25 with \goodarrow,
  mark= at position 0.35 with \goodarrow, mark= at position 0.47 with \goodarrow,
  mark= at position 0.6 with \goodarrow, mark= at position 0.72 with \goodarrow,
 mark= at position 0.86 with \goodarrow}];
 \draw  plot[smooth] coordinates {(b8) (200:\radius*23/40)(150:\radius*7/40) (95:\radius*26/40)(b2)}
 [ postaction=decorate, decoration={markings,
  mark= at position 0.14 with \goodarrow, mark= at position 0.25 with \goodarrow,
  mark= at position 0.35 with \goodarrow, mark= at position 0.47 with \goodarrow,
  mark= at position 0.6 with \goodarrow, mark= at position 0.72 with \goodarrow,
 mark= at position 0.86 with \goodarrow}];


\foreach \m/\a/\r in {179/88/0.6 , 134/328/0.6, 467/208/0.6, 178/117/0.4, 124/-3/.4, 457/237/.4 , 147/0/0, 127/65/.38, 145/-55/.38, 478/185/.38}
{ \draw (\a:\r*\radius) node (q\m) {$\m$}; }

\foreach \t/\h in { 145/147, 134/145, 124/134, 127/147, 457/145, 457/467,
478/147,  124/127, 179/127, 178/179, 178/478, 467/478}
{ \draw [quivarrow] (q\t) edge (q\h);}

\draw [cutarrow] (q147) edge (q457);
\draw [cutarrow] (q147) edge (q178);
\draw [cutarrow] (q147) edge (q124);

 \end{tikzpicture}
\]
\caption{The cut-mutation $\mu_{457}^+(C)$ of the cut $C$ of Figure \ref{fig:quivcut39}.}
\label{fig:quivcutmut39}
\end{figure}

Quivers with potential obtained from Postnikov diagrams are by definition planar in the sense of \cite[Definition 9.1]{HI11b}. We illustrate one application.

\begin{thm}\cite[Theorem 9.2]{HI11b}.
  Let $(R, P)$ be a self-injective planar quiver with potential that has enough cuts. Then all truncated Jacobian algebras $\hat\wp(R, P)_C$
  are iterated 2-APR tilts of each other. In particular they are derived equivalent.
\end{thm}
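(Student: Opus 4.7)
The plan is to reduce the statement to the combinatorial claim that the set of cuts of $(R,P)$ is connected under cut-mutation at strict sources and strict sinks, and then to invoke planarity to establish this. The first step is the translation between tilting and cut-mutation: when $\hat\wp(R,P)$ is self-injective, a 2-APR tilt of the truncated Jacobian algebra $\hat\wp(R,P)_C$ at the simple module corresponding to a strict source $x$ of $C$ produces again a truncated Jacobian algebra of the same $(R,P)$, with cut $\mu_x^+(C)$; symmetrically for strict sinks. Granted this, each cut-mutation yields a derived equivalence between the associated truncated Jacobian algebras, so it suffices to connect any two cuts $C,C'$ by a finite sequence of such moves.

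To do so, I would fix $C,C'$ and argue by induction on $|C\setminus C'|$. The goal is, for any $C\neq C'$, to produce a strict source or strict sink of $C$ at which cut-mutation strictly decreases this quantity. Here planarity enters decisively: each cycle of $P$ is an oriented face of the planar embedding and contributes exactly one arrow to each of $C$ and $C'$, so the arrows of $(C\setminus C')\cup(C'\setminus C)$ assemble into a union of closed planar walks separating faces where $C$ and $C'$ disagree from those where they agree. Traversing such a walk and using the ``enough cuts'' hypothesis to supply local flexibility, one aims to detect a vertex $x$ on the walk all of whose incoming arrows lie in $C$ and all of whose outgoing arrows lie outside $C$. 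Such a vertex is a strict source of $C$, and mutating there exchanges at least one arrow of $C\setminus C'$ for one of $C'\setminus C$, reducing the symmetric difference.

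The main technical obstacle is precisely guaranteeing the existence of such a strict source or sink compatible with the symmetric difference region. A priori, extremality at a candidate vertex $x$ depends also on the arrows at $x$ outside $C\setminus C'$, and nothing obvious prevents these from spoiling the strict source condition. Overcoming this will require a careful analysis of the planar region bounded by the walk, probably by choosing an extremal face (e.g.\ innermost or outermost with respect to a chosen orientation of the plane) and exploiting self-injectivity to ensure the Nakayama permutation preserves the combinatorial data governing extremality. Once connectedness of the cut-mutation graph is established, iterating the correspondence between cut-mutation and 2-APR tilting exhibits any two $\hat\wp(R,P)_C$ and $\hat\wp(R,P)_{C'}$ as iterated 2-APR tilts of each other, and composing the corresponding derived equivalences proves the final assertion.
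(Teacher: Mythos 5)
First, a point of reference: the paper does not prove this theorem at all --- it is quoted from \cite{HI11b}, and the paper's own contribution here is only the subsequent proposition verifying the ``enough cuts'' hypothesis for the quivers $(\underline Q,\underline W)$ arising from symmetric Postnikov diagrams. Your opening paragraph is sound and matches the known reduction: for self-injective $(R,P)$, cut-mutation at a strict source or sink realises a 2-APR tilt of the truncated Jacobian algebra, so everything hinges on showing that the set of cuts is connected under cut-mutation.

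That connectivity statement is exactly where your proposal has a genuine gap, and you acknowledge it yourself. Two concrete problems. (i) Your induction measure $\lvert C\setminus C'\rvert$ is not shown to decrease, and there is no reason it should: $\mu_x^+(C)$ simultaneously removes from $C$ \emph{all} arrows ending at $x$ and adds \emph{all} arrows starting at $x$, so a single mutation can enlarge the symmetric difference, and nothing in your walk analysis rules this out. (ii) The existence of a strict source of $C$ adapted to the symmetric difference is the entire difficulty, and ``a careful analysis \dots exploiting self-injectivity'' is not an argument; moreover self-injectivity is the wrong tool there, since it is needed only on the algebraic side (to make $\hat\wp(R,P)_C$ 2-representation finite and to match cut-mutation with 2-APR tilting), not in the combinatorics. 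The mechanism that actually works in \cite{HI11b} is cohomological rather than walk-based: a cut $C$ is the same as a $\{0,1\}$-valued grading $g_C$ on the arrows giving every cycle of $P$ total degree $1$, so $g_C-g_{C'}$ sums to zero around every face; since a planar quiver with potential is simply connected and its faces generate all cycles, $g_C-g_{C'}=\delta h$ for an integer height function $h$ on vertices. Cut-mutation at $x$ changes the grading exactly by $\delta(\chi_x)$ (immediate from the definition quoted in Section~\ref{sec:cuts}), so connecting $C$ to $C'$ amounts to reducing $h$ to a constant by mutations at extremal vertices of $h$, with a quantity like $\sum_x\lvert h(x)\rvert$ as the terminating measure; producing the required strict sources and sinks at extremal vertices is where planarity and the ``enough cuts'' hypothesis carry the load. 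Your local inspection of the boundary walk has no substitute for this global height function, so as it stands the proposal is a plan with the decisive step missing, not a proof.
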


The assumption is satisfied in our setting:
\begin{prop}
  If $(\underline Q, \underline W)$ is a self-injective quiver with potential constructed from a symmetric Postnikov diagram, then $(\underline Q, \underline W)$ has enough cuts.
\end{prop}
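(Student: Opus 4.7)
The plan is to show, for every arrow $a_0 \in \underline Q_1$, that there exists a cut of $(\underline Q, \underline W)$ containing $a_0$. Using the combinatorial description of cuts given just before the proposition, this amounts to constructing a set $C$ of non-boundary crossings of $D$ with $a_0 \in C$, such that no crossing in $C$ separates two cyclic boundary regions and every non-cyclic-boundary cyclic region of $D$ has exactly one adjacent crossing in $C$.

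First I would rephrase this as a graph-theoretic problem. Form the bipartite graph $H$ whose vertices are the interior (non-cyclic-boundary) cyclic regions of $D$, partitioned into the clockwise and counterclockwise classes, and whose edges are the non-boundary crossings of $D$ adjacent to at least one interior cyclic region; crossings incident to only one interior cyclic region (the other side being a cyclic boundary region) are treated as half-edges. Cuts of $(\underline Q, \underline W)$ then correspond bijectively to subsets of edges and half-edges of $H$ in which every vertex has degree exactly one, so the existence of a cut containing $a_0$ becomes a perfect-matching-type statement in $H$.

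I would proceed in two steps. Step one: exhibit at least one cut $C_0$ of $(\underline Q, \underline W)$, for instance by exploiting the rotational symmetry of $D$---pick a fundamental domain for $\rho$, define a consistent choice of crossings inside this domain, and extend by symmetry. Step two: either use cut-mutations at strict sources and sinks to transitively move $C_0$ so that the resulting cut contains $a_0$, or directly construct a cut containing $a_0$ by a propagation argument. Including $a_0$ in $C$ forces its two neighboring interior cyclic regions to be covered by $a_0$, and hence all their other boundary crossings to be excluded from $C$; each such excluded crossing then forces the cyclic region on its far side to be covered by a different crossing, and so on. The hard part will be the consistency of this propagation: one must verify that it never forces a cyclic region to have either zero or two chosen boundary crossings, and this would rely on the planarity of $\underline Q$ together with the balance between clockwise and counterclockwise interior cyclic regions imposed by self-injectivity (Theorem \ref{thm:main}).
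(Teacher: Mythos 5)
Your reformulation of cuts as a perfect-matching problem (with half-edges absorbing the cyclic boundary regions) is a correct and standard translation, but both steps of your actual argument are missing precisely the idea that makes the statement true. In step one, ``pick a fundamental domain for $\rho$, define a consistent choice of crossings inside this domain, and extend by symmetry'' is not a construction: the consistency across the domain boundary (and indeed the existence of any cut at all) is exactly the nontrivial content, and nothing forces a $\rho$-invariant cut to exist, so the symmetry is of no help here. In step two, the propagation you describe is not deterministic --- excluding one crossing from a face with several remaining candidate crossings does not force which crossing covers that face --- and the consistency you defer to ``planarity together with the balance \ldots imposed by self-injectivity'' cannot be supplied by those inputs: there are planar bipartite graphs in which a given edge lies in no perfect matching, and no balance statement between clockwise and counterclockwise internal faces follows from self-injectivity (nor is one needed, since the half-edges absorb any imbalance). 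The alternative route via cut-mutations is also incomplete: transitivity of cut-mutation on the set of cuts would not by itself produce a cut through a prescribed arrow, and the relevant results of \cite{HI11b} take ``enough cuts'' as a hypothesis rather than proving it.

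The missing ingredient is a \emph{consistency} property of the dimer model attached to a Postnikov diagram, and this is what the paper's proof uses. By \cite[Theorem 5.7]{BKM16} the planar embedding of $\underline Q$ can be taken to be isoradial: every face is a polygon inscribed in a unit circle. Choosing a generic point on the unit circle and marking, on each clockwise face, the arrow whose arc contains that point yields a set of arrows meeting every face exactly once, except possibly some counterclockwise faces along the boundary (\cite[\S 0.9]{Boc16}); those faces are then covered by choosing boundary arrows freely, which is the flexibility your half-edges encode. Choosing the point inside the arc determined by the given arrow $a$ on the circle of an adjacent clockwise face guarantees $a\in C$. Note that this argument uses only that the quiver comes from a reduced Postnikov diagram --- neither the symmetry of $D$ nor self-injectivity enters --- so your instinct that self-injectivity should power the propagation points in the wrong direction; the global coherence comes from isoradiality, which plays the role of a simultaneous, globally consistent ``angle'' choice that your local forcing argument cannot replicate.
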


\begin{proof}
  The planar embedding of $\underline Q$ can be taken to be a so-called \emph{isoradial embedding} (\cite[Theorem 5.7]{BKM16}). This means that 
  all the faces (i.e.~cycles in $\underline W$) of $\underline Q$ are polygons inscribed in a unit circle. Then proceed as follows. Pick an arrow $a$, and a face $F$ adjacent to $a$. Without loss of generality, 
  assume that $F$ is oriented clockwise.
  Now choose a point on the unit circle lying on the arc determined by $a$ on the circle around $F$. Mark the same point on every copy of the unit circle around 
  all clockwise-oriented faces. One can make the initial choice of a point such that no vertices are marked this way. For every clockwise-oriented face $F'$, mark the arrow on its 
  boundary corresponding to the arc determined by the marked point on the circle around $F'$.
  The set of arrows marked this way has the following property: every face has exactly one boundary arrow in this set, except possibly some counterclockwise-oriented faces adjacent to 
  the boundary of the quiver (\cite[\S 0.9]{Boc16}). Thus if we choose one boundary arrow for each of these faces, we get a cut containing $a$ and we are done.
\end{proof}

\begin{cor}
   If $(\underline Q, \underline W)$ is a self-injective quiver with potential constructed from a symmetric Postnikov diagram, then all truncated Jacobian algebras 
   $\wp(\underline Q, \underline W)_C$ are iterated 2-APR tilts of each other. In particular they are derived equivalent.
\end{cor}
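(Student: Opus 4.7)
The plan is to reduce the corollary to the cited Theorem 9.2 of \cite{HI11b}, whose conclusion is exactly what we want but phrased for the completed Jacobian algebra. So I need to check the three hypotheses of that theorem and then transport its conclusion from the completed setting back to the non-completed one.

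First I would verify the hypotheses. Self-injectivity of $(\underline Q, \underline W)$ is part of the assumption (cf.~Corollary~\ref{cor:main2}). Planarity of $\underline Q$ is built into the construction: the ice quiver $Q$ is embedded in the disk by placing each vertex inside its alternating region, and removing the frozen vertices together with their adjacent arrows leaves a planar subquiver $\underline Q$. Finally, the ``enough cuts'' condition is exactly the content of the proposition proved just before the corollary.

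Once these three hypotheses are in place, Theorem 9.2 of \cite{HI11b} yields that any two completed truncated Jacobian algebras $\hat\wp(\underline Q, \underline W)_{C}$ and $\hat\wp(\underline Q, \underline W)_{C'}$ are iterated 2-APR tilts of each other, and hence derived equivalent. To transfer this to the non-completed setting I would invoke Proposition~\ref{prop:complete}, which provides an isomorphism $\wp(\underline Q, \underline W)\cong \hat\wp(\underline Q, \underline W)$ induced by the canonical map $\C\underline Q\hookrightarrow \widehat{\C\underline Q}$. Since this map sends each arrow to itself, it respects the $\Z$-grading assigned by any cut $C$, so it restricts to an isomorphism $\wp(\underline Q, \underline W)_{C}\cong \hat\wp(\underline Q, \underline W)_{C}$ on the degree-zero parts. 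The sequence of 2-APR tilts obtained on the completed side then gives the corresponding sequence on the non-completed side.

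I do not foresee any serious obstacle: the proof is essentially an application of the cited theorem, and the only mildly delicate point is verifying that the isomorphism $\wp\cong\hat\wp$ is compatible with the cut-grading, which is immediate from the fact that the canonical map sends arrows to arrows.
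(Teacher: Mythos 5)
Your proposal is correct and follows essentially the same route as the paper: the corollary is an immediate application of the cited \cite[Theorem 9.2]{HI11b}, with self-injectivity given by hypothesis, planarity built into the Postnikov construction, ``enough cuts'' supplied by the preceding proposition, and the passage between $\wp(\underline Q,\underline W)$ and $\hat\wp(\underline Q,\underline W)$ handled by Proposition~\ref{prop:complete}. Your extra remark that the canonical isomorphism sends arrows to arrows and hence identifies the degree-zero parts $\wp(\underline Q,\underline W)_C\cong\hat\wp(\underline Q,\underline W)_C$ makes explicit a point the paper treats as an identification, which is a harmless (indeed welcome) refinement of the same argument.
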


Thus we get not only a way of generating many new examples of self-injective quivers with potential, but also the corresponding new 2-representation finite algebras.

An interesting property that 2-representation finite algebras can have is that of being \emph{$l$-homogeneous} (see \cite[Definition 1.2]{HI11}). 
It follows from \cite[Theorem 2.3]{HI11} that a truncated Jacobian algebra $\wp(R, W)_C$ as above is $l$-homogenous for some $l$ if and only if $\psi(C) = C$ (which in our case 
just means that $C$ is invariant under rotation by $2\pi k/n$).
Thus, examples coming from Postnikov diagrams are a good source of $l$-homogeneous, 2-representation finite algebras. One property that these algebras have is the following:
\begin{thm}\cite[Theorem 1.3]{HI11}.
  A finite-dimensional algebra of global dimension at most 2 is $l$-homogeneous 2-representation finite if and only if it is twisted $2\frac{l-1}{l}$-Calabi-Yau.
\end{thm}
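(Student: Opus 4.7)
My plan is to pass through the 3-preprojective algebra $\Pi = \Pi_3(\Lambda) = T_\Lambda \on{Ext}^2_\Lambda(D\Lambda, \Lambda)$. For $\Lambda$ of global dimension at most 2 and 2-representation finite, it is known that $\Pi$ is finite-dimensional and self-injective, and that the cut grading making $\Pi_0 = \Lambda$ recovers $\Lambda$ as its degree-$0$ part. The first step is to reformulate $l$-homogeneity of $\Lambda$ intrinsically on $\Pi$: it is equivalent to saying that $\Pi$ is concentrated in degrees $0, 1, \dots, l-1$ and that the Nakayama automorphism $\nu$ of $\Pi$ preserves the cut grading (equivalently, the cut $C$ defining the grading is $\nu$-invariant, so $\nu$ descends to an automorphism of each $\Pi_i$).

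Next I would translate between the derived category of $\Lambda$ and graded $\Pi$-modules. Under this dictionary, the Serre functor $\mathbb S = -\otimes^L_\Lambda D\Lambda$ on $D^b(\Lambda)$ corresponds, via the identification $D\Lambda \simeq \on{Hom}_\Pi(\Lambda, \,_1\Pi_\nu)$ shifted appropriately, to the composition of degree shift and the Nakayama twist on $\Pi$. Iterating $\mathbb S$ thus amounts to accumulating degree shifts in $\Pi$: since $\Pi$ has top degree $l-1$, one obtains
\begin{align*}
  \mathbb S^l \cong [2(l-1)] \circ \nu^*
\end{align*}
as endofunctors of $D^b(\Lambda)$, which is the twisted $2\frac{l-1}{l}$-Calabi-Yau property. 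At the bimodule level, this step amounts to identifying $\on{Ext}_{\Lambda^e}^{2(l-1)}(\Lambda, \Lambda\otimes \Lambda) \cong \,_{\nu^{-1}}\Lambda$ and checking vanishing of the lower Ext-groups, both of which follow from the grading concentration and the self-injectivity of $\Pi$.

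For the converse, assume $\Lambda$ has global dimension at most 2 and is twisted $2\frac{l-1}{l}$-Calabi-Yau. I would reconstruct $\Pi = \bigoplus_{i\geq 0}\on{Ext}_\Lambda^2(\tau_2^{-i}\Lambda, \Lambda)$ (where $\tau_2$ is the higher Auslander-Reiten translate) and use the fractional Calabi-Yau isomorphism $\mathbb S^l \cong [2(l-1)]\circ \nu^*$ applied to $\Lambda$ to show that $\tau_2^{l}(\Lambda) \cong \,_{\nu^{-1}}\Lambda$. This forces $\Pi$ to be finite-dimensional, hence $\Lambda$ is 2-representation finite, and moreover forces the Nakayama automorphism of $\Pi$ to preserve the cut grading with period $l$, which is exactly $l$-homogeneity.

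The main obstacle is the precise bookkeeping of twists, shifts, and constants. In particular one must verify that the exponent of the homological shift is $2(l-1)$ rather than $2l$ or $2l-1$: this is tied to the fact that $\Pi$ is concentrated in exactly $l$ degrees $0, \dots, l-1$, so iterating $\mathbb S$ once per degree accumulates $l-1$ copies of the $[2]$ coming from $\on{Ext}^2_\Lambda(D\Lambda, \Lambda)$. Making the bimodule isomorphism canonical (so that the twist $\nu^*$ on $\Lambda$ matches the Nakayama automorphism of $\Pi$ restricted to degree 0) requires care, but once the dictionary between the grading on $\Pi$ and the Serre functor on $D^b(\Lambda)$ is set up, both implications become formal.
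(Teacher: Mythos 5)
You should first be aware that the paper does not prove this statement at all: it is imported verbatim (in the case $n=2$) from \cite[Theorem 1.3]{HI11}, so the only proof to compare against is Herschend--Iyama's. Their argument is more direct than your plan: they work entirely in $D^b(\Lambda)$ with $\nu = -\otimes^{\mathbf L}_\Lambda D\Lambda$ and $\nu_2 = \nu\circ[-2]$, observe that $l$-homogeneity says precisely $\nu_2^{-(l-1)}\Lambda \cong {}_\phi(D\Lambda)$ for some automorphism $\phi$, unwind this to $\nu^l\Lambda \cong \Lambda_\phi[2(l-1)]$, and then invoke a separate lemma upgrading this one-object isomorphism to an isomorphism of two-sided tilting complexes (a bimodule isomorphism up to twist); for the converse they show, via cohomology bounds coming from $\on{gldim}\Lambda\le 2$ applied from both ends, that each $\nu_2^{-i}\Lambda$ for $0\le i\le l-1$ is a module with $\nu_2^{-(l-1)}\Lambda$ injective, and then apply Iyama's criterion to conclude that $\bigoplus_{i=0}^{l-1}\tau_2^{-i}\Lambda$ is $2$-cluster tilting. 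Your detour through the graded $3$-preprojective algebra is in the spirit of \cite{IO13} and could plausibly be completed, but as written it has three concrete gaps.

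First, the bimodule identity $\on{Ext}^{2(l-1)}_{\Lambda^e}(\Lambda,\Lambda\otimes\Lambda)\cong {}_{\nu^{-1}}\Lambda$ cannot express the twisted $2\frac{l-1}{l}$-Calabi-Yau property: over $\C$ one has $\on{pd}_{\Lambda^e}\Lambda = \on{gldim}\Lambda \le 2$, so this Ext group vanishes identically as soon as $l\ge 3$. The correct bimodule formulation involves the $l$-th derived tensor power, namely $(D\Lambda)^{\otimes^{\mathbf L}_\Lambda l}\cong \Lambda_\phi[2(l-1)]$, which no single Ext group of $\Lambda$ over $\Lambda^e$ detects. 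Second, in the converse you iterate the module-level $\tau_2$ and assert that $\tau_2^l(\Lambda)\cong{}_{\nu^{-1}}\Lambda$ ``forces $\Pi$ to be finite-dimensional, hence $\Lambda$ is $2$-representation finite.'' Before $2$-representation finiteness is known you may only iterate $\nu_2$ in the derived category, and you must prove the intermediate objects $\nu_2^{-i}\Lambda$ have cohomology concentrated in degree $0$ (this is the sandwich argument above, and it is where $\on{gldim}\Lambda\le 2$ is really used); moreover finite-dimensionality of $\Pi$ alone never yields $2$-representation finiteness --- any representation-finite hereditary non-semisimple algebra (e.g.\ the path algebra of $A_2$) has $\on{Ext}^2_\Lambda(D\Lambda,\Lambda)=0$, hence $\Pi=\Lambda$ finite-dimensional, yet admits no $2$-cluster tilting module. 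Self-injectivity of $\Pi$ is the missing ingredient, and establishing it is the actual content. Third, your step-one reformulation conflates ``$\Pi$ concentrated in degrees $0,\dots,l-1$'' (which only says the longest $\tau_2$-orbit has length $l$) with $l$-homogeneity (all orbits of length exactly $l$, equivalently the socle of every graded projective sits in top degree $l-1$); and the equivalence of the corrected statement with $\nu$-invariance of the cut grading is essentially \cite[Theorem 2.3]{HI11} itself, so citing it inside a proof of Theorem 1.3 risks circularity unless you supply an independent argument.
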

Twisted fractionally Calabi-Yau algebras can be tensored over $\C$, so we get for instance:
\begin{prop}\cite[Corollary 1.5]{HI11}
  If $\Lambda_1, \Lambda_2$ are
  $l$-homogeneous 2-representation finite algebras, then $\Lambda_1 \otimes_{\C} \Lambda_2$ is $l$-homogeneous 4-representation finite.
\end{prop}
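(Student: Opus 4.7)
The plan is to reduce this to the Calabi-Yau characterisation of $l$-homogeneous higher-representation-finite algebras cited just above (Theorem \cite[Theorem 1.3]{HI11}) and to use that twisted fractional Calabi-Yau dimensions add under $\otimes_\C$. By the quoted theorem each $\Lambda_i$ is twisted $2\tfrac{l-1}{l}$-Calabi-Yau, so I would aim to show that $\Lambda_1\otimes_\C \Lambda_2$ is twisted $4\tfrac{l-1}{l}$-Calabi-Yau and of global dimension at most $4$; the natural generalisation of the quoted characterisation (with $2$ replaced by $4$ in the CY exponent) would then yield $l$-homogeneous $4$-representation finiteness.

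First I would control the global dimension: the bound $\on{gldim}(\Lambda_1\otimes_\C \Lambda_2)\leq \on{gldim}\Lambda_1 + \on{gldim}\Lambda_2\leq 4$ is standard for finite-dimensional $\C$-algebras and follows from tensoring projective bimodule resolutions of the two factors. Next I would transport the twisted fractional CY property to the tensor product. Writing the assumption as $\mathbb S_i^{\, l}\cong [2(l-1)]$ up to twist by an algebra automorphism $\nu_i$, where $\mathbb S_i$ is the Serre functor of $D^b(\Lambda_i)$, one uses that the Serre functor of $D^b(\Lambda_1\otimes_\C \Lambda_2)$ is the external tensor product $\mathbb S_1\boxtimes \mathbb S_2$. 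Its $l$-th power is then $\mathbb S_1^{\, l}\boxtimes \mathbb S_2^{\, l}\cong [2(l-1)]\boxtimes [2(l-1)]=[4(l-1)]$, twisted by the tensor product automorphism $\nu_1\otimes \nu_2$. This exhibits $\Lambda_1\otimes_\C \Lambda_2$ as twisted $4\tfrac{l-1}{l}$-Calabi-Yau.

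The hard part will be to verify that the denominator remains exactly $l$ and does not collapse to a proper divisor: this amounts to checking that $\nu_1\otimes \nu_2$ has order precisely $l$ in the outer automorphism group of $\Lambda_1\otimes_\C \Lambda_2$, which one argues from the fact that each restriction $\nu_i$ retains outer order $l$. A secondary point is to exhibit an explicit $4$-cluster tilting module over $\Lambda_1\otimes_\C \Lambda_2$: the natural candidate is $M_1\otimes_\C M_2$, where $M_i$ is a $2$-cluster tilting $\Lambda_i$-module, and the required $\on{Ext}$-vanishing in degrees $1,2,3$ can be verified from the corresponding vanishing for the $M_i$ via the K\"unneth formula.
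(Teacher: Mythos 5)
Your main line of attack coincides with the intended proof: the paper does not reprove this statement but cites \cite[Corollary 1.5]{HI11}, and the argument there is precisely your first two steps. One bounds $\on{gldim}(\Lambda_1\otimes_\C\Lambda_2)\leq 4$, notes that the Serre functor of $D^b(\Lambda_1\otimes_\C\Lambda_2)$ is the (derived) tensor product of the Serre functors, deduces that $\Lambda_1\otimes_\C\Lambda_2$ is twisted $4\tfrac{l-1}{l}$-Calabi--Yau with twist $\nu_1\otimes\nu_2$, and applies \cite[Theorem 1.3]{HI11} with $n=4$ (the theorem is proved there for arbitrary $n$; the version quoted in this paper is merely its $n=2$ specialisation). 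However, what you flag as ``the hard part'' rests on a misreading of the definition: being twisted $\tfrac{m}{l}$-Calabi--Yau in \cite{HI11} is a property of the formal pair $(m,l)$, with no minimality requirement on $l$ and no condition on the order of the twisting automorphism, and the equivalence in Theorem 1.3 is stated for that pair. So $l$-homogeneity of the tensor product falls out of the backward direction of the theorem directly; nothing about the outer order of $\nu_1\otimes\nu_2$ needs to be, or should be, verified, and an attempt to show that this order is exactly $l$ would stall your proof on an irrelevant (and not obviously true) claim.

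Your secondary point, by contrast, is genuinely false: $M_1\otimes_\C M_2$ is \emph{not} a $4$-cluster tilting module, and the K\"unneth formula shows the opposite of what you want. Indeed $\on{Ext}^2\bigl(M_1\otimes_\C M_2,\, M_1\otimes_\C M_2\bigr)$ contains $\on{Ext}^2_{\Lambda_1}(M_1,M_1)\otimes_\C\on{End}_{\Lambda_2}(M_2)$, and $\on{Ext}^2_{\Lambda_1}(M_1,M_1)\neq 0$ whenever $\Lambda_1$ is not semisimple: both $\Lambda_1$ and $D\Lambda_1$ are summands of $M_1$, and $\on{Ext}^2_{\Lambda_1}(D\Lambda_1,\Lambda_1)$ is the degree-one part of the 3-preprojective algebra of $\Lambda_1$, which vanishes only in the semisimple case ($2$-cluster tilting kills $\on{Ext}^1$, not $\on{Ext}^2$). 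The correct $4$-cluster tilting module is the \emph{diagonal} summand $\bigoplus_{j=0}^{l-1}\tau_2^{-j}(\Lambda_1)\otimes_\C\tau_2^{-j}(\Lambda_2)$, and it is exactly in synchronising these slices that the hypothesis that both factors are $l$-homogeneous \emph{with the same} $l$ enters; without it the tensor product need not be $4$-representation finite at all. Since existence of the cluster tilting module already follows from Theorem 1.3, the repair is simply to delete this K\"unneth argument, after which your proof is complete and agrees with the cited one.
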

As we mentioned, in the case of Postnikov diagrams it is easy to see whether a truncated Jacobian algebra is homogeneous: one needs to check whether the cut is invariant under $\rho$. 
For instance, the truncated Jacobian algebra of Figure \ref{fig:quivcut39} is not homogeneous, but the one of Figure \ref{fig:quivcutmut39} is.
In particular, the $N$-fold tensor product of the latter algebra with itself is $2N$-representation finite.

\section{Examples}\label{sec:ex}

We present some self-injective quivers with potential obtained from symmetric $(k,n)$-Postnikov diagrams. The Nakayama permutation 
acts by rotation by $2\pi k/n$, and has order $a = n\left/\on{GCD}(k,n).\right.$

The quivers with potential on the left hand side of Figure \ref{fig:quiv4a} and of Figure \ref{fig:cob1} (corresponding to $(k,n) \in\left\{ 
(3,12), (4,10)\right\}$) had already been found by Martin Herschend, and the latter had also been found 
independently by Sefi Ladkani. These results are not published.
A symmetric $(4,8)$-Postnikov diagram had appeared in \cite[Section 11]{MR13}.

\subsection{The case $a = 2$}

If $a = 2$ then we must have $n = 2k$. 
\begin{prop}
  For every $k>1$, there exists a symmetric $(k, 2k)$-Postnikov diagram whose associated self-injective quiver with potential is a square grid with $(k-1)$ vertices on each side.
\end{prop}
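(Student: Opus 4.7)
The plan is to exhibit an explicit symmetric $(k,2k)$-Postnikov diagram $D$ whose set of non-boundary labels is naturally a $(k-1)\times(k-1)$ grid, and then read off the associated quiver. I would proceed in three steps.

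First, I would construct $D$ from a pencil of $k$ diameters. Place the boundary points equidistantly so that $i$ and $i+k$ are antipodal, and for each $j\in\{1,\dots,k\}$ draw strands $j$ and $j+k$ as two nearly parallel, slightly deflected translates of the diameter joining their endpoints. The deflections are chosen generically and in a manner that is invariant under the rotation $\rho$ of the disk by $\pi$ (which swaps strand $j$ with strand $j+k$). A suitable choice guarantees the Postnikov axioms (1)--(4): distinct pairs of strands on different diameters cross transversally, the two strands on the same diameter do not meet, and the left/right-alternation axiom holds along every strand because each strand is crossed once by each other strand on a different diameter.

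Second, I would verify the combinatorics. The arrangement subdivides the interior of the disk into a grid-shaped tessellation, in which the alternating regions in the interior are in bijection with the cells of a $(k-1)\times(k-1)$ square. Together with the $2k$ boundary alternating regions labelled by cyclic intervals (Remark~\ref{rmk:labels}), one gets $(k-1)^2+2k=k^2+1=k(n-k)+1$ alternating regions, matching the count of a maximal noncrossing collection by Theorem~\ref{thm:classnoncross}. Since the whole configuration is $\rho$-invariant, $\mathbb I(D)+k=\mathbb I(D)$, so $D$ is symmetric by Lemma~\ref{lem:I}.

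Third, I would read off $\underline Q$: crossings of $D$ between two non-boundary alternating regions are arrows of $\underline Q$, and the orientation convention of Section~\ref{sec:post} produces exactly the horizontal and vertical arrows between adjacent cells of the grid, with each interior $4$-cycle contributing a plaquette term to $\underline W$. Self-injectivity is then automatic from Corollary~\ref{cor:main2}.

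The main obstacle is the geometric bookkeeping in the first step: one must perturb the $k$ diameters explicitly enough to verify simultaneously all four Postnikov axioms \emph{and} full $\rho$-invariance. This is finicky but requires no new ideas. A cleaner alternative, which I would prefer in a write-up, is to bypass the explicit geometry and instead exhibit the required maximal noncrossing collection $\mathbb I\subseteq\binom{[2k]}{k}$ directly --- namely the rectangular (``initial'') cluster of $\C[\on{Gr}_k(\C^{2k})]$, whose non-frozen labels are parametrized by a $(k-1)\times(k-1)$ grid and which is manifestly invariant under $I\mapsto I+k$ --- and then invoke Theorem~\ref{thm:classnoncross} to produce $D$ and the $\rho$-symmetric embedding of $\underline Q$ from Lemma~\ref{lem:I}.
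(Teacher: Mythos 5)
Your first route is, in outline, the same as the paper's: the paper proves the proposition by exhibiting exactly such a $\rho$-invariant arrangement of strands along the $k$ diameters (Figure~\ref{fig:postsq}). But the part you defer as ``finicky bookkeeping'' is the actual mathematical content, and the justification you give for axiom (3) is false: alternation along a strand is a condition on the cyclic \emph{order} and the \emph{sides} of its crossings, not on their number, so ``each strand is crossed once by each strand on a different diameter'' proves nothing. In particular, a generic $\rho$-invariant choice of deflections does \emph{not} satisfy the Postnikov axioms. Already for $k=2$, put $1,2,3,4$ at N, E, S, W, let strand $1$ bulge east and strand $3$ west, and compare the two $\rho$-invariant options for the other diameter: if strand $2$ bulges north and strand $4$ south one gets the valid diagram, with the central square alternating and labelled $\{2,4\}$; if strand $2$ bulges south and strand $4$ north, the four crossings bound a \emph{clockwise} central region, and the region at the marked point $2$ has boundary orientations (with, with, against), which is neither cyclic nor alternating, so axiom (3) fails at the boundary vertex. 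Both choices are generic and symmetric; only one works. This is precisely why the paper must prescribe the side pattern by parity (``strand $i$ crosses strand $i+k$ from the left at vertex $i$ if and only if $i$ is odd'') and the exact interleaving order in which strands $k$ and $2k$ cross the remaining strands. Without such a prescription your step 1 fails, and steps 2--3, which presuppose the grid-shaped cell structure, have nothing to stand on (your counting argument only shows the \emph{number} of alternating regions is right, not that they form a grid).

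The alternative you say you would prefer is simply wrong: the rectangles cluster of $\C[\on{Gr}_k(\C^{2k})]$ is \emph{not} invariant under $I\mapsto I+k$ once $k\geq 3$. For $(k,n)=(3,6)$ its mutable labels are, in the usual dictionary between rectangles and $k$-subsets, $\{1,2,4\},\{1,2,5\},\{1,3,4\},\{1,4,5\}$, and $\{1,2,5\}+3=\{2,4,5\}$ is not among them; since adding a constant to all labels does not affect $+k$-invariance, no anchoring convention repairs this. (Its quiver is not the square grid either: the mutable part of the rectangles seed carries a diagonal arrow, i.e.\ triangular faces.) The symmetric grid collection of the proposition is a genuinely different collection --- for $(3,6)$ it is $\{1,2,4\},\{1,4,5\},\{1,3,4\},\{1,4,6\}$ together with the six cyclic intervals, which is $+3$-invariant and differs from the rectangles collection by one geometric exchange --- and producing it in general, whether as a collection or as a drawing, is exactly the step that cannot be bypassed. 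So both of your routes have a gap at the same place: the existence of the symmetric diagram (equivalently, of the $+k$-invariant grid collection) is asserted rather than proved, and the one concrete mechanism you offer for it does not hold.
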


\begin{proof}
  The construction in Figure \ref{fig:postsq} yields such a symmetric Postnikov diagram, and it produces the correct quiver. To avoid clogging the picture, we have not 
  marked the direction of the strands. They should be understood as follows: strand $i$ crosses strand $i+k$ coming from the left at vertex $i$ if and only if $i$ is odd.
  The strands $k$ and $2k$ cross strands $k-1, 2k-2, k-3, 2k-4, \dots, k-2, 2k-1$ in this order for $k$ and the opposite for $2k$, or viceversa depending on the parity of $k$.
\end{proof}
Such quivers and their planar mutations were already studied in \cite[\S9.3]{HI11b}. In Figure \ref{fig:squares} we show the cases $k = 4,5$.

\subsection{The case $a = 3$}

If $a = 3$ then we may assume $n = 3k$. Notice that we are treating the cases of clockwise and counterclockwise rotation together, and this is justified by the fact that
now we are focusing on the self-injective algebra, which does not change if we reflect the quiver (even though the two quivers are not isomorphic as planar quivers with faces).
Here one could expect to get the family of self-injective quivers with potential given by 3-preprojective algebras of type $A_j$ (cf.\cite[\S9.2]{HI11b}). This is true for $k\in \left\{ 2,3,4 \right\}$, 
where we get the quivers in Figures \ref{fig:triangles} and \ref{fig:triangles2} (corresponding to type $A_{2}, A_4, A_6$). Notice that the quiver corresponding to the symmetric Postnikov diagram of Figure \ref{fig:post39}
is equivalent to the one of type $A_4$ by mutation at the orbit consisting of the vertices of the big triangle. 
Notice that type $A_j$ with $j$ odd cannot appear this way, since the number of alternating internal faces of a $(k, 3k)$-Postnikov diagram is $(k-1)(2k-1)$.
The Postnikov diagrams corresponding to these three quivers are shown in Figures \ref{fig:tri1}, \ref{fig:tri2} and \ref{fig:tri3}.

\subsection{The case $a= 4$}

For $a = 4$ (we may then assume that $n = 4k$, since the only elements of order 4 in $\Z/4\Z$ are $\pm 1$) we present three self-injective quivers with potential coming from symmetric Postnikov diagrams,
 for $(k, n)\in\left\{ (3, 12), (4, 16), (5,20) \right\}$. They are shown in Figure \ref{fig:quiv4a} and Figure \ref{fig:quiv4b}.
The corresponding Postnikov diagrams are shown in Figures \ref{fig:quad1}, \ref{fig:quad2} and \ref{fig:quad3}.

\subsection{Cobwebs}

We obtain a new infinite family of self-injective quivers with potential, with arbitrarily large order of $\sigma$.

For any odd integer $x\geq 3$, define a graph $\on{Cob}(x)$ as follows. Set 
\[
  \on{Cob}(x)_0 = \left\{ c_{1}, \dots, c_x \right\} \cup \left\{ d_{st} \right\}_{s = 1, \dots, (x-3)/2}^{ t = 1, \dots, 2x}.
\]
Set 
\begin{align*}
  \on{Cob}(x)_1 &= \left\{ (c_t, c_{t+1}) \right\}_{t= 1, \dots, x} \\
  &\cup \left\{ (d_{st}, d_{s, t+1}) \right\}_{s = 1, \dots, (x-3)/2}^{ t = 1, \dots, 2x}  \\
  &\cup \left\{ (d_{st}, d_{s+1, t}) \right\}_{s = 1, \dots, (x-5)/2}^{ t = 1, \dots, 2x} \\
  &\cup \left\{ (c_t, d_{1,2t-1}), (c_t, d_{1,2t}) \right\}_{t= 1, \dots, x}
\end{align*}
where indices are taken modulo $x$ in the first row and modulo $2x$ in the second row.
This graph has a natural embedding in the plane given by arranging the vertices $c_t$ clockwise in a regular $x$-gon of radius 1, and the vertices $d_{st}$ clockwise in a regular $2x$-gon of radius $s+1$ for every $s$.
This embedding equips $\on{Cob}(x)$ with faces bounded by cycles (one $x$-gon, $x$ triangles and $x^2-4x $ squares). Choosing an orientation of an edge, we can turn $\on{Cob}(x)$ into a quiver by requiring that
all these cycles be cyclically oriented. Call $\on{Cob}^+(x)$ and $\on{Cob}^-(x)$ the quivers one gets by orienting the $x$-gon counterclockwise and clockwise respectively (see Figures \ref{fig:cob1} and \ref{fig:cob2}).
As usual, one can define potentials on these quivers by taking the alternating sum of all cycles bounding faces. 

\begin{prop}
  For every odd $x\geq 3$, there exists a symmetric $(x-1, 2x)$-Postnikov diagram whose associated self-injective quiver with potential is $\on{Cob}^+(x)$. Similarly, there exists a 
  symmetric $(x+1, 2x)$-Postnikov diagram whose associated self-injective quiver with potential is $\on{Cob}^-(x)$.
\end{prop}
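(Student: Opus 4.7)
The plan is to construct, for each odd $x \geq 3$, explicit symmetric Postnikov diagrams $D_+$ of type $(x-1, 2x)$ and $D_-$ of type $(x+1, 2x)$ whose associated self-injective quivers with potential are $\on{Cob}^+(x)$ and $\on{Cob}^-(x)$ respectively. By Lemma \ref{lem:I}, symmetry under $\rho$ is equivalent to $\mathbb I = \mathbb I + k$ for the associated maximal noncrossing collection; the goal is therefore to exhibit such an $\mathbb I$ and check that the resulting quiver is the desired cobweb. Note first that $\gcd(x \pm 1, 2x) = 2$, so the rotation $\rho$ has order $n/\gcd(k, n) = x$, matching the $x$-fold rotational symmetry of $\on{Cob}^{\pm}(x)$. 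A vertex count also matches: the number of non-boundary vertices of the quiver of a $(k, n)$-Postnikov diagram is $k(n-k) + 1 - n$, which for $(k, n) = (x \pm 1, 2x)$ equals $x^2 - 2x = |\on{Cob}(x)_0|$.

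First I would describe $D_{\pm}$ pictorially, with the $2x$ strands arranged rotationally invariantly under rotation by $2\pi/x$. Concretely, the disk is divided into concentric layers: an innermost alternating $x$-gon region corresponding to the central face of $\on{Cob}^{\pm}(x)$, a ring consisting of $x$ triangles alternating with $x$ squares (matching the triangles and the innermost ring of squares in $\on{Cob}(x)$), and $(x-5)/2$ further rings of $2x$ squares each, before reaching the boundary layer of $2x$ alternating regions labelled by the cyclic intervals of $[2x]$. The strands are routed so that each strand starts at marked point $i$, spirals inward toward the centre, winds once around the innermost region in accordance with the prescribed orientation of the central $x$-gon, and exits to marked point $i+k$. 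The distinction between $D_+$ and $D_-$ is exactly the orientation of the central $x$-gon face (counterclockwise when $k = x-1$ and clockwise when $k = x+1$), which produces $\on{Cob}^+(x)$ and $\on{Cob}^-(x)$ respectively.

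Having fixed the diagram, I would verify that (i) the four axioms of a reduced Postnikov diagram hold (transversality, finitely many crossings, alternation of left/right crossings along each strand, and the orientation condition on pairs of crossings), (ii) the symmetry under $\rho$ is manifest from the construction, and (iii) the associated quiver with potential is $\on{Cob}^{\pm}(x)$, with potential given by the alternating sum of clockwise and counterclockwise face cycles. The hard step is (iii): one must check that the strand routing produces exactly the arrows and face cycles prescribed by the combinatorics of $\on{Cob}(x)$, and in particular that the triangles in the first ring and the central $x$-gon inherit the correct orientations from the strand directions. In practice, the cleanest presentation will likely be an explicit figure for small representative cases (for instance $x = 3$, $x = 5$ and $x = 7$, the first two of which coincide with examples already mentioned in the paper), supplemented by an inductive combinatorial recipe for the strand pattern that makes the identification with $\on{Cob}^{\pm}(x)$ transparent and uniform in $x$.
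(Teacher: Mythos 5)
Your preliminary sanity checks are correct and worthwhile (the order of $\rho$ is $x$ since $\gcd(x\pm 1,2x)=2$, and the mutable vertex count $k(n-k)+1-n=x^2-2x$ matches $|\on{Cob}(x)_0|$), and your overall strategy --- build an explicitly $\rho$-invariant diagram and read off the quiver --- is the same as the paper's. But the proposal stops short of a proof at exactly the point where the content of the proposition lies: you never specify the strand routing. ``Each strand spirals inward, winds once around the innermost region, and exits to $i+k$'' does not determine a diagram, and you explicitly defer the verification that the result is $\on{Cob}^{\pm}(x)$ to figures for small $x$ plus an unspecified ``inductive combinatorial recipe.'' Since the proposition is an existence statement, that recipe (or, alternatively, the explicit maximal noncrossing collection $\mathbb I$ with $\mathbb I=\mathbb I+k$ that you announce at the outset but never exhibit) \emph{is} the proof, and it is missing. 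The paper supplies precisely this data, and it is simpler than a layer-by-layer description: for odd $i$, strand $i$ is the \emph{straight chord} from $i$ to $i+x-1$, so the odd strands form an $x$-pointed star whose central region and first ring already account for the $x$-gon face, the vertices $c_t$, and the triangles; for even $i$, strand $i$ crosses strand $i-1$ first, then follows the chord strand $i+x$ as closely as possible back to its start, and crosses strand $i+1$ last (Figure \ref{fig:cobconst}; see Figure \ref{fig:post614} for $x=7$). From this routing the Postnikov axioms, the invariance under rotation by $2\pi k/n$, and the identification of the face structure with $\on{Cob}^+(x)$ can be checked directly and uniformly in $x$; the $(x+1,2x)$ case is ``similar'' (in effect, reversing all strands turns a $(k,n)$-diagram into an $(n-k,n)$-diagram and flips every face orientation).

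There is also a conceptual slip in your layer description: you call the innermost region ``an innermost alternating $x$-gon region corresponding to the central face of $\on{Cob}^{\pm}(x)$.'' In the dictionary between Postnikov diagrams and quivers, \emph{alternating} regions are the vertices, while faces of the quiver (the cycles in the potential) correspond to \emph{cyclic} regions. So the central $x$-gon face of $\on{Cob}^{\pm}(x)$ must come from an oriented cyclic region, surrounded by the $x$ alternating regions giving $c_1,\dots,c_x$; the same care is needed for your ring of triangles. This is not cosmetic for your argument, because the orientation of that central cyclic region is exactly what you invoke to distinguish $\on{Cob}^+(x)$ from $\on{Cob}^-(x)$.
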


\begin{proof}
 We give the construction for the first case, the second case being similar. 
Start by connecting vertex $i$ to $i+x-1$ with a straight strand for every $i$ odd (creating a $x$-pointed star shape). Then for every $i$ even, draw a strand $i\to i+x-1$ as in Figure \ref{fig:cobconst}: cross strand $i-1$, then 
follow strand $i+x$ as close as possible until its start, and cross strand $i+1$ as last crossing.
This construction yields a symmetric Postnikov diagram, and it produces the correct quiver.
\end{proof}
In Figure \ref{fig:post614} we illustrate the case $x = 7$.
The quivers $\on{Cob}^-(x)$ are shown in Figure \ref{fig:cob1} and Figure \ref{fig:cob2} for $x = 5, 7, 9$. The Nakayama permutation acts by rotation by $\pi(x+1)/x$, which has order $x$.

\subsection{Miscellaneous}

We have two more examples of self-injective quivers with potential coming from symmetric Postnikov diagrams, for $(k, n) = (6,15)$ and $(6,21)$.
We show the first one in Figure \ref{fig:quiv615}.
For $(k, n) = (6, 21)$ we get Figure \ref{fig:quiv621}. In Figure \ref{fig:post615} and Figure \ref{fig:post621} we show the corresponding Postnikov diagrams.

\newpage
\section{Figures}\label{sec:figs}

\begin{figure}[H]
  \[
    \begin{tikzpicture}[baseline=(bb.base)]

\newcommand{\goodarrow}{\arrow{angle 60}}
\newcommand{\bstart}{90} %
\newcommand{\nth}{18} 
\newcommand{\radius}{5cm} 
\newcommand{\eps}{11pt} 
\newcommand{\dotrad}{0.07cm} 

\path (0,0) node (bb) {};


\draw (0,0) circle(\radius) [thick,dashed];


 \coordinate (b1) at (\bstart:\radius);
 \coordinate (b2) at (\bstart-\nth:\radius);
 \coordinate (b3) at (\bstart-2*\nth:\radius);
 \coordinate (b4) at (\bstart-3*\nth:\radius);
 \coordinate (b5) at (\bstart-4*\nth:\radius);
 \draw (\bstart:\radius+\eps) node {$1$}; 
 \draw (\bstart-\nth:\radius+\eps) node {$2$}; 
 \draw (\bstart-2*\nth:\radius+\eps) node {$3$}; 
 \draw (\bstart -4*\nth :\radius+\eps) node {$k$};
  
 \coordinate (c1) at (-\bstart:\radius);
 \coordinate (c2) at (-\bstart-\nth:\radius);
 \coordinate (c3) at (-\bstart-2*\nth:\radius);
 \coordinate (c4) at (-\bstart-3*\nth:\radius);
 \coordinate (c5) at (-\bstart-4*\nth:\radius);
 \draw (-\bstart:\radius+\eps) node {$1+k$}; 
 \draw (-\bstart-\nth:\radius+\eps) node {$2+k$}; 
 \draw (-\bstart-2*\nth:\radius+\eps) node {$3+k$}; 
 \draw (-\bstart -4*\nth :\radius+\eps) node {$2k$};
\foreach \n in {1,2,3,5} {\draw (b\n) circle(\dotrad) [fill=black];
\draw (c\n) circle(\dotrad) [fill=black];
}


\draw  plot [smooth]
coordinates {(b1) (0: .7*\radius)(c1)};
\draw  plot [smooth]
coordinates {(c1) (180: .7*\radius)(b1)};
\draw  plot [smooth]
coordinates {(b2) (90: .9*\radius) (180: .6*\radius) (250:.8*\radius)(c2)};
\draw  plot [smooth]
coordinates {(c2) (-90: .9*\radius) (0: .6*\radius) (70:.8*\radius)(b2)};
\draw  plot [smooth]
coordinates {(b3) (90-\nth: .96*\radius) (90: .8*\radius) (180:.5*\radius) (250:.75*\radius)(c3)};
\draw  plot [smooth]
coordinates {(c3) (270-\nth: .96*\radius) (270: .8*\radius) (0:.5*\radius) (70:.75*\radius)(b3)};


\draw  plot [smooth]
coordinates {(b5) (2*\nth: .93*\radius)};
\draw  plot [smooth]
coordinates {(b5) (2*\nth: .88*\radius)};

\draw  plot [smooth]
coordinates {(\bstart-2*\nth:\radius*.9)  (\bstart-\nth:\radius*.9) (90:\radius*.6) (180: \radius*.2) (270-\nth:.7*\radius) (270-\nth -3:.85*\radius) (270-2*\nth:.85*\radius)};

\draw  plot [smooth]
coordinates {(c5) (2*\nth+180: .93*\radius)};
\draw  plot [smooth]
coordinates {(c5) (2*\nth+180: .88*\radius)};

\draw  plot [smooth]
coordinates {(\bstart-2*\nth+180:\radius*.9)  (\bstart-\nth+180:\radius*.9) (90+180:\radius*.6) (0: \radius*.2) (270-\nth+180:.7*\radius) (270-\nth -3+180:.85*\radius) (270-2*\nth+180:.85*\radius)};


\draw [dotted, very thick] plot
coordinates {(180:.4*\radius) (180:.3*\radius)};
\draw [dotted, very thick] plot
coordinates {(0:.4*\radius) (0:.3*\radius)};

\draw[dotted, very thick] plot 
coordinates {(90-2*\nth -6:.88*\radius) (90-2*\nth -12:.88*\radius)};

\draw[dotted, very thick] plot 
coordinates {(270-2*\nth -6:.88*\radius) (-90-2*\nth -12:.88*\radius)};

\end{tikzpicture}
\]
\caption{The construction of a symmetric $(k,2k)$-Postnikov diagram.}
\label{fig:postsq}
\end{figure}
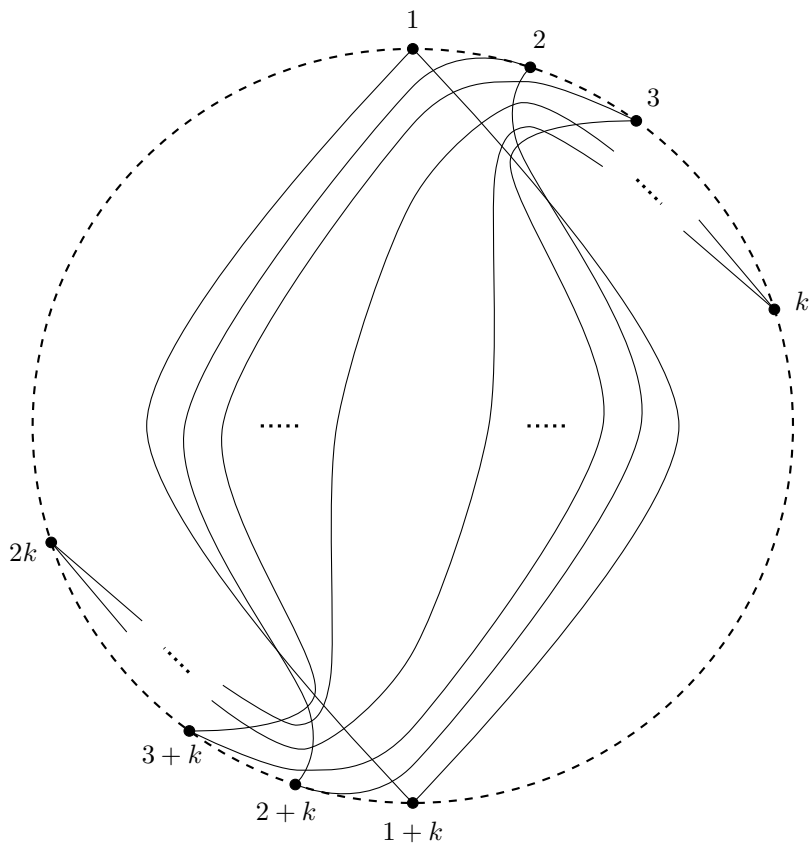

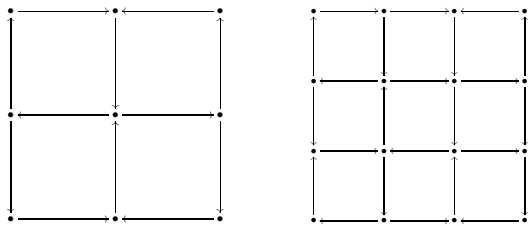
\begin{figure}[H]
\[
  \resizebox{3cm}{!}{
\begin{xy} 0;<4pt,0pt>:<0pt,-4pt>:: 
(0,0) *+{\bullet} ="0",
(20,0) *+{\bullet} ="1",
(40,0) *+{\bullet} ="2",
(0,20) *+{\bullet} ="3",
(20,20) *+{\bullet} ="4",
(40,20) *+{\bullet} ="5",
(0,40) *+{\bullet} ="6",
(20,40) *+{\bullet} ="7",
(40,40) *+{\bullet} ="8",
"0", {\ar"1"},
"3", {\ar"0"},
"2", {\ar"1"},
"1", {\ar"4"},
"5", {\ar"2"},
"4", {\ar"3"},
"3", {\ar"6"},
"4", {\ar"5"},
"7", {\ar"4"},
"5", {\ar"8"},
"6", {\ar"7"},
"8", {\ar"7"},
\end{xy}}
\hspace{1cm}
 \resizebox{3cm}{!}{
\begin{xy} 0;<.8pt,0pt>:<0pt,-.8pt>:: 
(0,0) *+{\bullet} ="0",
(75,0) *+{\bullet} ="1",
(150,0) *+{\bullet} ="2",
(225,0) *+{\bullet} ="3",
(0,75) *+{\bullet} ="4",
(75,75) *+{\bullet} ="5",
(150,75) *+{\bullet} ="6",
(225,75) *+{\bullet} ="7",
(0,150) *+{\bullet} ="8",
(75,150) *+{\bullet} ="9",
(150,150) *+{\bullet} ="10",
(225,150) *+{\bullet} ="11",
(0,225) *+{\bullet} ="12",
(75,225) *+{\bullet} ="13",
(150,225) *+{\bullet} ="14",
(225,225) *+{\bullet} ="15",
"0", {\ar"1"},
"4", {\ar"0"},
"1", {\ar"2"},
"1", {\ar"5"},
"3", {\ar"2"},
"2", {\ar"6"},
"7", {\ar"3"},
"5", {\ar"4"},
"4", {\ar"8"},
"5", {\ar"6"},
"9", {\ar"5"},
"6", {\ar"7"},
"6", {\ar"10"},
"7", {\ar"11"},
"8", {\ar"9"},
"12", {\ar"8"},
"10", {\ar"9"},
"9", {\ar"13"},
"10", {\ar"11"},
"14", {\ar"10"},
"11", {\ar"15"},
"13", {\ar"12"},
"13", {\ar"14"},
"15", {\ar"14"},
\end{xy}}
\]

\caption{Two square grid self-injective quivers with potential (here $n = 2k, k = 4,5$).}
\label{fig:squares}

\end{figure}
\begin{figure}[H]
  \[
  \resizebox{3cm}{!}{\begin{xy} 0;<1.2pt,0pt>:<0pt,-1.2pt>:: 
(55,0) *+{\bullet} ="0",
(0,95) *+{\bullet} ="1",
(110,95) *+{\bullet} ="2",
"1", {\ar"0"},
"0", {\ar"2"},
"2", {\ar"1"},
\end{xy}

  }
  \hspace{1cm}
 \resizebox{3.1cm}{!}{\begin{xy} 0;<.5pt,0pt>:<0pt,-.5pt>:: 
(175,0) *+{\bullet} ="0",
(116,96) *+{\bullet} ="1",
(234,96) *+{\bullet} ="2",
(59,193) *+{\bullet} ="3",
(175,193) *+{\bullet} ="4",
(291,193) *+{\bullet} ="5",
(0,289) *+{\bullet} ="6",
(116,289) *+{\bullet} ="7",
(234,289) *+{\bullet} ="8",
(350,289) *+{\bullet} ="9",
"1", {\ar"0"},
"0", {\ar"2"},
"2", {\ar"1"},
"3", {\ar"1"},
"1", {\ar"4"},
"4", {\ar"2"},
"2", {\ar"5"},
"4", {\ar"3"},
"6", {\ar"3"},
"3", {\ar"7"},
"5", {\ar"4"},
"7", {\ar"4"},
"4", {\ar"8"},
"8", {\ar"5"},
"5", {\ar"9"},
"7", {\ar"6"},
"8", {\ar"7"},
"9", {\ar"8"},
\end{xy}
}
\]
\caption{The quivers of the 3-preprojective algebras of type $A_2$ and $A_4$.}
\label{fig:triangles}
\end{figure}
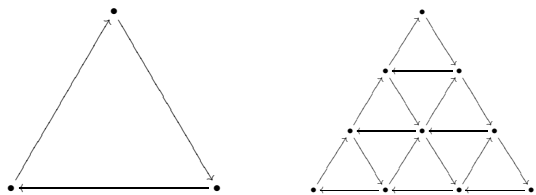

\newpage
\begin{figure}[H]
  \vspace{1cm}
\[
 \resizebox{8cm}{!}{\begin{xy} 0;<1.2pt,0pt>:<0pt,-1.2pt>:: 
(175,0) *+{\bullet} ="0",
(140,58) *+{\bullet} ="1",
(210,58) *+{\bullet} ="2",
(105,117) *+{\bullet} ="3",
(175,117) *+{\bullet} ="4",
(245,117) *+{\bullet} ="5",
(70,174) *+{\bullet} ="6",
(140,174) *+{\bullet} ="7",
(210,174) *+{\bullet} ="8",
(280,174) *+{\bullet} ="9",
(35,233) *+{\bullet} ="10",
(105,233) *+{\bullet} ="11",
(175,233) *+{\bullet} ="12",
(245,233) *+{\bullet} ="13",
(315,233) *+{\bullet} ="14",
(0,291) *+{\bullet} ="15",
(70,291) *+{\bullet} ="16",
(140,291) *+{\bullet} ="17",
(210,291) *+{\bullet} ="18",
(280,291) *+{\bullet} ="19",
(350,291) *+{\bullet} ="20",
"1", {\ar"0"},
"0", {\ar"2"},
"2", {\ar"1"},
"3", {\ar"1"},
"1", {\ar"4"},
"4", {\ar"2"},
"2", {\ar"5"},
"4", {\ar"3"},
"6", {\ar"3"},
"3", {\ar"7"},
"5", {\ar"4"},
"7", {\ar"4"},
"4", {\ar"8"},
"8", {\ar"5"},
"5", {\ar"9"},
"7", {\ar"6"},
"10", {\ar"6"},
"6", {\ar"11"},
"8", {\ar"7"},
"11", {\ar"7"},
"7", {\ar"12"},
"9", {\ar"8"},
"12", {\ar"8"},
"8", {\ar"13"},
"13", {\ar"9"},
"9", {\ar"14"},
"11", {\ar"10"},
"15", {\ar"10"},
"10", {\ar"16"},
"12", {\ar"11"},
"16", {\ar"11"},
"11", {\ar"17"},
"13", {\ar"12"},
"17", {\ar"12"},
"12", {\ar"18"},
"14", {\ar"13"},
"18", {\ar"13"},
"13", {\ar"19"},
"19", {\ar"14"},
"14", {\ar"20"},
"16", {\ar"15"},
"17", {\ar"16"},
"18", {\ar"17"},
"19", {\ar"18"},
"20", {\ar"19"},
\end{xy}
}
\]
\caption{The quiver of the 3-preprojective algebra of type $A_6$.}
\label{fig:triangles2}
\end{figure}
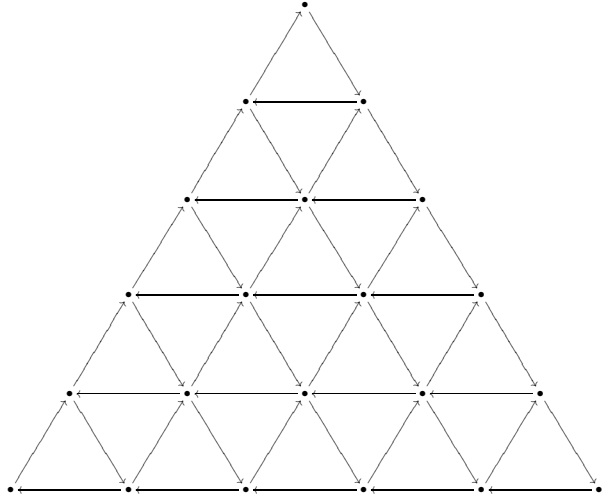

\begin{figure}[H]
  \[
    \begin{tikzpicture}[baseline=(bb.base)]

\newcommand{\strarrow}{\arrow{angle 60}}
\newcommand{\bstart}{150} %
\newcommand{\nth}{60} 
\newcommand{\radius}{4.5cm} 
\newcommand{\eps}{11pt} 
\newcommand{\dotrad}{0.07cm} 

\path (0,0) node (bb){};

\draw (0,0) circle(\radius) [thick, dashed];
\foreach \n in {1,...,6}
{ \coordinate (b\n) at (\bstart-\nth*\n:\radius);
  \draw (\bstart-\nth*\n:\radius+\eps) node {$\n$}; }
  

\foreach \n in {1,...,6} {\draw (b\n) circle(\dotrad) [fill=black];}

\foreach\n in {1, 3,5}
{ 
  \coordinate (c\n) at (\bstart-\nth*\n-2*\nth:\radius);
  \draw plot coordinates {(b\n) (c\n)};
}
\foreach\n in {2, 4,6}
{ 
  \coordinate (c\n) at (\bstart-\nth*\n-2*\nth:\radius);
  \draw plot [smooth] coordinates {(b\n) (\bstart-\nth*\n-4*\nth:\radius*.2)(c\n)};
}

    \end{tikzpicture}
  \]
\caption{The $(2,6)$-Postnikov diagram corresponding to the 3-preprojective algebra of type $A_2$.}
  \label{fig:tri1}
\end{figure}

\newpage
\begin{figure}[H]
\[
\begin{tikzpicture}[baseline=(bb.base), quivarrow/.style={black, -latex, very thick}
]

\newcommand{\goodarrow}{\arrow{angle 60}}
\newcommand{\bstart}{130} 
\newcommand{\nth}{40} 
\newcommand{\qstart}{150} 
\newcommand{\radius}{4.5cm} 
\newcommand{\eps}{11pt} 
\newcommand{\dotrad}{0.07cm} 

\path (0,0) node (bb) {};


\draw (0,0) circle(\radius) [thick,dashed];

\foreach \n in {1,...,9}
{ \coordinate (b\n) at (\bstart-\nth*\n:\radius);
  \draw (\bstart-\nth*\n:\radius+\eps) node {$\n$}; }
  

\foreach \n in {1,2,3,4,5,6,7,8,9} {\draw (b\n) circle(\dotrad) [fill=black];}


\draw  plot[smooth]
coordinates {(b9) (b3)}
;
\draw  plot[smooth]
coordinates {(b3) (b6)}
;
\draw  plot[smooth]
coordinates {(b6) (b9)}
;
\draw  plot[smooth] coordinates {(b7)(230:\radius*25/40) (-30:.25*\radius) (70:\radius*25/40) (b1)}
;
\draw  plot[smooth] coordinates {(b1) (110:\radius*25/40)(-150:\radius*.25) (-50:\radius*25/40)(b4)}
;
\draw  plot[smooth] coordinates {(b4)(-10:\radius*25/40) (90:\radius*.25) (190:\radius*25/40)(b7)}
;

\foreach \n in {2,5,8}
{
\coordinate (c\n) at (\bstart-\nth*\n-3*\nth:\radius);
\draw  plot[smooth] coordinates {(b\n) (\bstart-\nth*\n- 0*\nth:\radius*.53) ((\bstart-\nth*\n+1*\nth:\radius*.45)((\bstart-\nth*\n-.5*\nth:\radius*.1)
(\bstart-\nth*\n-2*\nth:\radius*.4)(\bstart-\nth*\n-3.5*\nth:\radius*.55)(c\n)
}
;
}

 \end{tikzpicture}
\]
\caption{The $(3,9)$-Postnikov diagram corresponding to the 3-preprojective algebra of type $A_4$.}
\label{fig:tri2}
\end{figure}

\begin{figure}[H]
  \[
    \begin{tikzpicture}[baseline=(bb.base)]

\newcommand{\strarrow}{\arrow{angle 60}}
\newcommand{\bstart}{120} %
\newcommand{\nth}{30} 
\newcommand{\radius}{4.5cm} 
\newcommand{\eps}{11pt} 
\newcommand{\dotrad}{0.07cm} 

\path (0,0) node (bb){};

\draw (0,0) circle(\radius) [thick, dashed];
\foreach \n in {1,...,12}
{ \coordinate (b\n) at (\bstart-\nth*\n:\radius);
  \draw (\bstart-\nth*\n:\radius+\eps) node {$\n$}; }
  

\foreach \n in {1,...,12} {\draw (b\n) circle(\dotrad) [fill=black];}

\foreach\n in {1,5,9}
{ 
  \coordinate (c\n) at (\bstart-\nth*\n-4*\nth:\radius);
  \draw plot coordinates {(b\n) (c\n)};
}

\foreach\n in {4, 8,12}
{
  \coordinate (c\n) at (\bstart-\nth*\n-4*\nth:\radius);
  \draw plot [smooth]
  coordinates {(b\n) (\bstart-\nth*\n+.2*\nth:\radius*.5)
(\bstart-\nth*\n-4.2*\nth:\radius*.5)(c\n)
};
}
\foreach\n in {2,6,10}
{
  \coordinate (c\n) at (\bstart-\nth*\n-4*\nth:\radius);
  \draw plot [smooth]
  coordinates {(b\n) (\bstart-\nth*\n+3.8*\nth:\radius*.33)
  (\bstart-\nth*\n-4*\nth:\radius*.2)
(c\n)
};
}
\foreach\n in {3,7,11}
{
  \coordinate (c\n) at (\bstart-\nth*\n-4*\nth:\radius);
  \draw plot [smooth]
  coordinates {(b\n) 
  (\bstart-\nth*\n-0*\nth:\radius*.4)
  (\bstart-\nth*\n-3*\nth:\radius*.03)
    (\bstart-\nth*\n+6.1*\nth:\radius*.5)
  (\bstart-\nth*\n-4.5*\nth:\radius*.5)
(c\n)
};
}

    \end{tikzpicture}
  \]
\caption{The $(4,12)$-Postnikov diagram corresponding to the 3-preprojective algebra of type $A_6$.}
\label{fig:tri3}
\end{figure}

\newpage

\begin{figure}[H] 
  \vspace{1cm}
  \[
\resizebox{7cm}{!}{
  \begin{xy} 0;<1.3pt,0pt>:<0pt,-1.3pt>:: 
(150,0) *+{\bullet} ="0",
(100,50) *+{\bullet} ="1",
(50,100) *+{\bullet} ="2",
(0,150) *+{\bullet} ="3",
(50,200) *+{\bullet} ="4",
(100,250) *+{\bullet} ="5",
(150,300) *+{\bullet} ="6",
(200,250) *+{\bullet} ="7",
(250,200) *+{\bullet} ="8",
(300,150) *+{\bullet} ="9",
(250,100) *+{\bullet} ="10",
(200,50) *+{\bullet} ="11",
(100,100) *+{\bullet} ="12",
(100,200) *+{\bullet} ="13",
(200,200) *+{\bullet} ="14",
(200,100) *+{\bullet} ="15",
"1", {\ar"0"},
"0", {\ar"11"},
"2", {\ar"1"},
"1", {\ar"12"},
"3", {\ar"2"},
"12", {\ar"2"},
"4", {\ar"3"},
"5", {\ar"4"},
"4", {\ar"13"},
"6", {\ar"5"},
"13", {\ar"5"},
"7", {\ar"6"},
"8", {\ar"7"},
"7", {\ar"14"},
"9", {\ar"8"},
"14", {\ar"8"},
"10", {\ar"9"},
"11", {\ar"10"},
"10", {\ar"15"},
"15", {\ar"11"},
"13", {\ar"12"},
"12", {\ar"15"},
"14", {\ar"13"},
"15", {\ar"14"},
"2", {\ar"4"},
"11", {\ar"1"},
"5",{\ar"7"},
"8",{\ar"10"},

\end{xy} 
}
\hspace{1cm}
\resizebox{7cm}{!}{
 \begin{xy} 0;<.5pt,0pt>:<0pt,-.5pt>:: 
(360,360) *+{\bullet} ="0",
(360,180) *+{\bullet} ="1",
(540,360) *+{\bullet} ="2",
(360,540) *+{\bullet} ="3",
(180,360) *+{\bullet} ="4",
(270,270) *+{\bullet} ="5",
(450,270) *+{\bullet} ="6",
(450,450) *+{\bullet} ="7",
(270,450) *+{\bullet} ="8",
(450,90) *+{\bullet} ="9",
(540,180) *+{\bullet} ="10",
(630,450) *+{\bullet} ="11",
(540,540) *+{\bullet} ="12",
(270,630) *+{\bullet} ="13",
(180,540) *+{\bullet} ="14",
(90,270) *+{\bullet} ="15",
(180,180) *+{\bullet} ="16",
(360,90) *+{\bullet} ="17",
(180,90) *+{\bullet} ="18",
(90,180) *+{\bullet} ="19",
(270,0) *+{\bullet} ="20",
(630,180) *+{\bullet} ="21",
(630,360) *+{\bullet} ="22",
(540,90) *+{\bullet} ="23",
(720,270) *+{\bullet} ="24",
(360,630) *+{\bullet} ="25",
(540,630) *+{\bullet} ="26",
(630,540) *+{\bullet} ="27",
(450,720) *+{\bullet} ="28",
(90,360) *+{\bullet} ="29",
(90,540) *+{\bullet} ="30",
(180,630) *+{\bullet} ="31",
(0,450) *+{\bullet} ="32",
"0", {\ar"1"},
"0", {\ar"2"},
"0", {\ar"3"},
"0", {\ar"4"},
"5", {\ar"0"},
"6", {\ar"0"},
"7", {\ar"0"},
"8", {\ar"0"},
"1", {\ar"5"},
"1", {\ar"6"},
"9", {\ar"1"},
"16", {\ar"1"},
"1", {\ar"17"},
"2", {\ar"6"},
"2", {\ar"7"},
"10", {\ar"2"},
"11", {\ar"2"},
"2", {\ar"22"},
"3", {\ar"7"},
"3", {\ar"8"},
"12", {\ar"3"},
"13", {\ar"3"},
"3", {\ar"25"},
"4", {\ar"5"},
"4", {\ar"8"},
"14", {\ar"4"},
"15", {\ar"4"},
"4", {\ar"29"},
"5", {\ar"16"},
"6", {\ar"10"},
"7", {\ar"12"},
"8", {\ar"14"},
"10", {\ar"9"},
"17", {\ar"9"},
"9", {\ar"23"},
"21", {\ar"10"},
"12", {\ar"11"},
"22", {\ar"11"},
"11", {\ar"27"},
"26", {\ar"12"},
"14", {\ar"13"},
"25", {\ar"13"},
"13", {\ar"31"},
"30", {\ar"14"},
"16", {\ar"15"},
"15", {\ar"19"},
"29", {\ar"15"},
"18", {\ar"16"},
"17", {\ar"18"},
"20", {\ar"17"},
"19", {\ar"18"},
"18", {\ar"20"},
"22", {\ar"21"},
"23", {\ar"21"},
"21", {\ar"24"},
"24", {\ar"22"},
"25", {\ar"26"},
"28", {\ar"25"},
"27", {\ar"26"},
"26", {\ar"28"},
"29", {\ar"30"},
"32", {\ar"29"},
"31", {\ar"30"},
"30", {\ar"32"},
\end{xy}
}
\]
\caption{Self-injective quivers with potential for $(k,n) = (3,12)$ and $(k,n) = (4,16)$.}
\label{fig:quiv4a}
\end{figure}
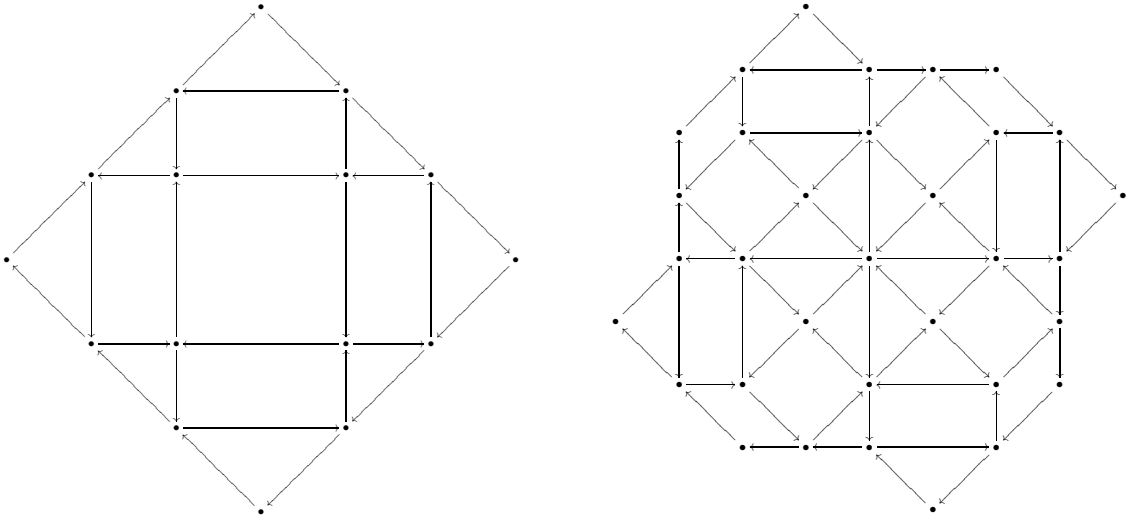

\begin{figure}[H]
  \vspace{1cm}
  \[
\resizebox{9cm}{!}{

\begin{xy} 0;<1.5pt,0pt>:<0pt,-1.5pt>::
(128,192) *+{\bullet} ="0",
(128,160) *+{\bullet} ="1",
(160,128) *+{\bullet} ="2",
(192,128) *+{\bullet} ="3",
(224,160) *+{\bullet} ="4",
(224,192) *+{\bullet} ="5",
(192,224) *+{\bullet} ="6",
(160,224) *+{\bullet} ="7",
(112,112) *+{\bullet} ="8",
(160,96) *+{\bullet} ="9",
(128,64) *+{\bullet} ="10",
(96,64) *+{\bullet} ="11",
(64,96) *+{\bullet} ="12",
(64,128) *+{\bullet} ="13",
(96,160) *+{\bullet} ="14",
(192,96) *+{\bullet} ="15",
(240,112) *+{\bullet} ="16",
(224,64) *+{\bullet} ="17",
(256,64) *+{\bullet} ="18",
(288,96) *+{\bullet} ="19",
(288,128) *+{\bullet} ="20",
(256,160) *+{\bullet} ="21",
(256,192) *+{\bullet} ="22",
(240,240) *+{\bullet} ="23",
(288,224) *+{\bullet} ="24",
(288,256) *+{\bullet} ="25",
(256,288) *+{\bullet} ="26",
(224,288) *+{\bullet} ="27",
(192,256) *+{\bullet} ="28",
(112,240) *+{\bullet} ="29",
(160,256) *+{\bullet} ="30",
(128,288) *+{\bullet} ="31",
(96,288) *+{\bullet} ="32",
(64,256) *+{\bullet} ="33",
(64,224) *+{\bullet} ="34",
(96,192) *+{\bullet} ="35",
(32,96) *+{\bullet} ="36",
(256,32) *+{\bullet} ="37",
(320,256) *+{\bullet} ="38",
(96,320) *+{\bullet} ="39",
(128,32) *+{\bullet} ="40",
(160,32) *+{\bullet} ="41",
(192,32) *+{\bullet} ="42",
(224,0) *+{\bullet} ="43",
(320,128) *+{\bullet} ="44",
(320,160) *+{\bullet} ="45",
(320,192) *+{\bullet} ="46",
(352,224) *+{\bullet} ="47",
(224,320) *+{\bullet} ="48",
(192,320) *+{\bullet} ="49",
(160,320) *+{\bullet} ="50",
(128,352) *+{\bullet} ="51",
(32,224) *+{\bullet} ="52",
(32,192) *+{\bullet} ="53",
(32,160) *+{\bullet} ="54",
(0,128) *+{\bullet} ="55",
"0", {\ar"1"},
"7", {\ar"0"},
"0", {\ar"29"},
"35", {\ar"0"},
"1", {\ar"2"},
"8", {\ar"1"},
"1", {\ar"14"},
"2", {\ar"3"},
"2", {\ar"8"},
"9", {\ar"2"},
"3", {\ar"4"},
"3", {\ar"15"},
"16", {\ar"3"},
"4", {\ar"5"},
"4", {\ar"16"},
"21", {\ar"4"},
"5", {\ar"6"},
"5", {\ar"22"},
"23", {\ar"5"},
"6", {\ar"7"},
"6", {\ar"23"},
"28", {\ar"6"},
"29", {\ar"7"},
"7", {\ar"30"},
"8", {\ar"9"},
"10", {\ar"8"},
"8", {\ar"11"},
"12", {\ar"8"},
"8", {\ar"13"},
"14", {\ar"8"},
"9", {\ar"10"},
"15", {\ar"9"},
"11", {\ar"10"},
"10", {\ar"41"},
"11", {\ar"12"},
"40", {\ar"11"},
"36", {\ar"12"},
"13", {\ar"14"},
"13", {\ar"36"},
"54", {\ar"13"},
"14", {\ar"35"},
"15", {\ar"16"},
"17", {\ar"15"},
"16", {\ar"17"},
"18", {\ar"16"},
"16", {\ar"19"},
"20", {\ar"16"},
"16", {\ar"21"},
"17", {\ar"37"},
"42", {\ar"17"},
"19", {\ar"18"},
"37", {\ar"18"},
"19", {\ar"20"},
"44", {\ar"19"},
"21", {\ar"20"},
"20", {\ar"45"},
"22", {\ar"21"},
"22", {\ar"23"},
"24", {\ar"22"},
"23", {\ar"24"},
"25", {\ar"23"},
"23", {\ar"26"},
"27", {\ar"23"},
"23", {\ar"28"},
"24", {\ar"38"},
"46", {\ar"24"},
"26", {\ar"25"},
"38", {\ar"25"},
"26", {\ar"27"},
"48", {\ar"26"},
"28", {\ar"27"},
"27", {\ar"49"},
"30", {\ar"28"},
"30", {\ar"29"},
"29", {\ar"31"},
"32", {\ar"29"},
"29", {\ar"33"},
"34", {\ar"29"},
"29", {\ar"35"},
"31", {\ar"30"},
"31", {\ar"39"},
"50", {\ar"31"},
"33", {\ar"32"},
"39", {\ar"32"},
"33", {\ar"34"},
"52", {\ar"33"},
"35", {\ar"34"},
"34", {\ar"53"},
"36", {\ar"55"},
"37", {\ar"43"},
"38", {\ar"47"},
"39", {\ar"51"},
"41", {\ar"40"},
"41", {\ar"42"},
"43", {\ar"42"},
"45", {\ar"44"},
"45", {\ar"46"},
"47", {\ar"46"},
"49", {\ar"48"},
"49", {\ar"50"},
"51", {\ar"50"},
"53", {\ar"52"},
"53", {\ar"54"},
"55", {\ar"54"},
\end{xy}
}
\]
\caption{A self-injective quiver with potential for $(k,n) = (5,20)$.}
\label{fig:quiv4b}
\end{figure}

\newpage
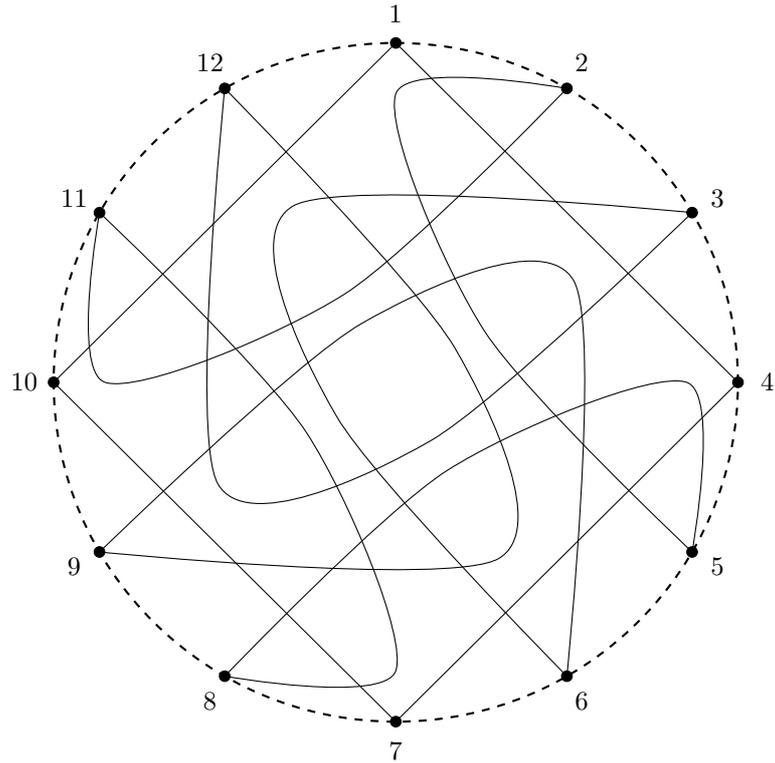
\begin{figure}[H]
  \[
    \begin{tikzpicture}[baseline=(bb.base)]

\newcommand{\strarrow}{\arrow{angle 60}}
\newcommand{\bstart}{120} %
\newcommand{\nth}{30} 
\newcommand{\radius}{4.5cm} 
\newcommand{\eps}{11pt} 
\newcommand{\dotrad}{0.07cm} 

\path (0,0) node (bb){};

\draw (0,0) circle(\radius) [thick, dashed];
\foreach \n in {1,...,12}
{ \coordinate (b\n) at (\bstart-\nth*\n:\radius);
  \draw (\bstart-\nth*\n:\radius+\eps) node {$\n$}; }
  

\foreach \n in {1,...,12} {\draw (b\n) circle(\dotrad) [fill=black];}

\foreach\n in {1, 4, 7, 10}
{ 
  \coordinate (c\n) at (\bstart-\nth*\n-3*\nth:\radius);
  \draw plot coordinates {(b\n) (c\n)};
}

\foreach\n in {2, 5, 8, 11}
{
  \coordinate (c\n) at (\bstart-\nth*\n-3*\nth:\radius);
  \draw plot [smooth]
  coordinates {(b\n) (\bstart-\nth*\n+\nth:\radius*.85)(\bstart-\nth*\n-\nth:\radius*.3)(c\n)};
}
\foreach\n in {3, 6, 9, 12}
{
  \coordinate (c\n) at (\bstart-\nth*\n-3*\nth:\radius);
  \draw plot [smooth]
  coordinates {(b\n) (\bstart-\nth*\n+3*\nth:\radius*.6)(\bstart-\nth*\n-6*\nth:\radius*.2)(c\n)};
}

    \end{tikzpicture}
  \]
  \caption{The symmetric $(3, 12)$-Postnikov diagram corresponding to the left quiver of Figure \ref{fig:quiv4a}.}
  \label{fig:quad1}
\end{figure}

\begin{figure}[H]
  \[
    \begin{tikzpicture}[baseline=(bb.base)]

\newcommand{\strarrow}{\arrow{angle 60}}
\newcommand{\bstart}{112.5} %
\newcommand{\nth}{22.5} 
\newcommand{\radius}{4.5cm} 
\newcommand{\eps}{11pt} 
\newcommand{\dotrad}{0.07cm} 

\path (0,0) node (bb){};

\draw (0,0) circle(\radius) [thick, dashed];
\foreach \n in {1,...,16}
{ \coordinate (b\n) at (\bstart-\nth*\n:\radius);
  \draw (\bstart-\nth*\n:\radius+\eps) node {$\n$}; }
  

\foreach \n in {1,...,16} {\draw (b\n) circle(\dotrad) [fill=black];}

\foreach\n in {3,7,11,15}
{ 
  \coordinate (c\n) at (\bstart-\nth*\n-4*\nth:\radius);
  \draw plot coordinates {(b\n) (c\n)};
}

\foreach\n in {4, 8, 12, 16}
{
  \coordinate (c\n) at (\bstart-\nth*\n-4*\nth:\radius);
  \draw plot [smooth]
  coordinates {(b\n) (\bstart-\nth*\n+1*\nth:\radius*.7) (\bstart-\nth*\n-1*\nth:.4*\radius)(c\n)};
}

\foreach\n in {1, 5, 9, 13}
{
  \coordinate (c\n) at (\bstart-\nth*\n-4*\nth:\radius);
  \draw plot [smooth]
  coordinates {(b\n)(\bstart-\nth*\n+2*\nth:\radius*.65)(\bstart-\nth*\n+4*\nth:\radius*.5) (\bstart-\nth*\n+8*\nth:\radius*.1)(c\n)
  };
}
\foreach\n in {2, 6, 10, 14}
{
  \coordinate (c\n) at (\bstart-\nth*\n-4*\nth:\radius);
  \draw plot [smooth]
  coordinates {(b\n)(\bstart-\nth*\n+.5*\nth:\radius*.7)(\bstart-\nth*\n+2*\nth:\radius*.65)(\bstart-\nth*\n+4.5*\nth:\radius*.3)
    (\bstart-\nth*\n+3*\nth:\radius*.1)(\bstart-\nth*\n+1*\nth:\radius*.2)(\bstart-\nth*\n-.5*\nth:\radius*.28)(\bstart-\nth*\n-2*\nth:\radius*.5)
    (\bstart-\nth*\n-3*\nth:\radius*.55)(\bstart-\nth*\n-4*\nth:\radius*.7)(c\n)
  };
}

    \end{tikzpicture}
  \] 
  \caption{The symmetric $(4, 16)$-Postnikov diagram corresponding to the right quiver of Figure \ref{fig:quiv4a}.}
  \label{fig:quad2}
\end{figure}
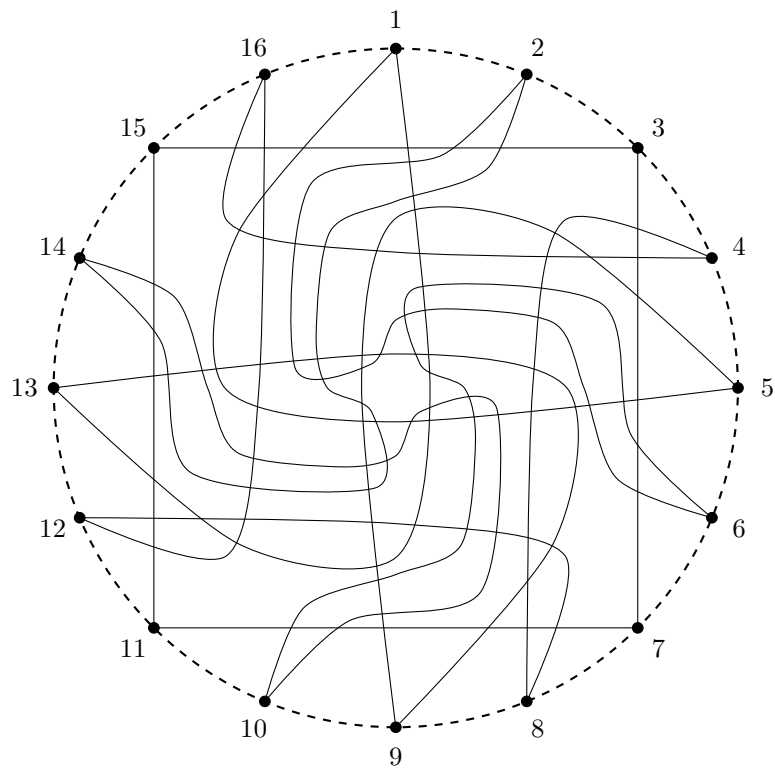

\newpage
\begin{figure}[H]
  \[
    \begin{tikzpicture}[baseline=(bb.base)]

\newcommand{\strarrow}{\arrow{angle 60}}
\newcommand{\bstart}{108} %
\newcommand{\nth}{18} 
\newcommand{\radius}{4.5cm} 
\newcommand{\eps}{11pt} 
\newcommand{\dotrad}{0.07cm} 

\path (0,0) node (bb){};

\draw (0,0) circle(\radius) [thick, dashed];
\foreach \n in {1,...,20}
{ \coordinate (b\n) at (\bstart-\nth*\n:\radius);
  \draw (\bstart-\nth*\n:\radius+\eps) node {$\n$}; }
  

\foreach \n in {1,...,20} {\draw (b\n) circle(\dotrad) [fill=black];}

\foreach\n in {1, 6, 11, 16, 3, 8, 13, 18}
{ 
  \coordinate (c\n) at (\bstart-\nth*\n-5*\nth:\radius);
  \draw plot coordinates {(b\n) (c\n)};
}

\foreach\n in {2, 7, 12, 17}
{
  \coordinate (c\n) at (\bstart-\nth*\n-5*\nth:\radius);
  \draw plot [smooth]
  coordinates {(b\n) (\bstart-\nth*\n+\nth:\radius*.85) (\bstart-\nth*\n-\nth:\radius*.2) (\bstart-\nth*\n-4.5*\nth:.8*\radius)(c\n)};
}
\foreach\n in {5, 10, 15, 20}
{
  \coordinate (c\n) at (\bstart-\nth*\n-5*\nth:\radius);
  \draw plot [smooth]
  coordinates {(b\n) (\bstart-\nth*\n+2*\nth:\radius*.8) (\bstart-\nth*\n-\nth:\radius*.2) (\bstart-\nth*\n-3.5*\nth:.75*\radius)(c\n)};
}

\foreach\n in {4, 9, 14, 19}
{
  \coordinate (c\n) at (\bstart-\nth*\n-5*\nth:\radius);
  \draw plot [smooth]
  coordinates {(b\n)(\bstart-\nth*\n+1*\nth:\radius*.95) (\bstart-\nth*\n+2*\nth:\radius*.85) (\bstart-\nth*\n+3*\nth:\radius*.78) (\bstart-\nth*\n+4*\nth:\radius*.7) (\bstart-\nth*\n+7*\nth:\radius*.55)
  (\bstart-\nth*\n+8*\nth:\radius*.3) (\bstart-\nth*\n+12*\nth:\radius*.4)(\bstart-\nth*\n+13*\nth:\radius*.6) (\bstart-\nth*\n+14*\nth:\radius*.85)
  (\bstart-\nth*\n+15*\nth:\radius*.75) (\bstart-\nth*\n+16*\nth:\radius*.9) (c\n)
  };
}

    \end{tikzpicture}
  \]
  \caption{The symmetric $(5, 20)$-Postnikov diagram corresponding to the quiver of Figure \ref{fig:quiv4b}.}
  \label{fig:quad3}

\end{figure}
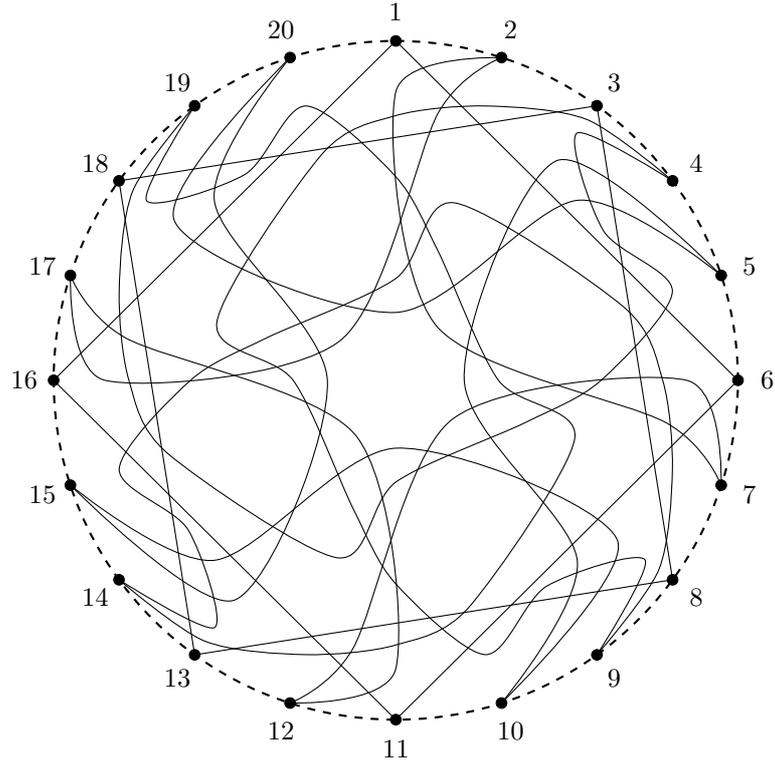

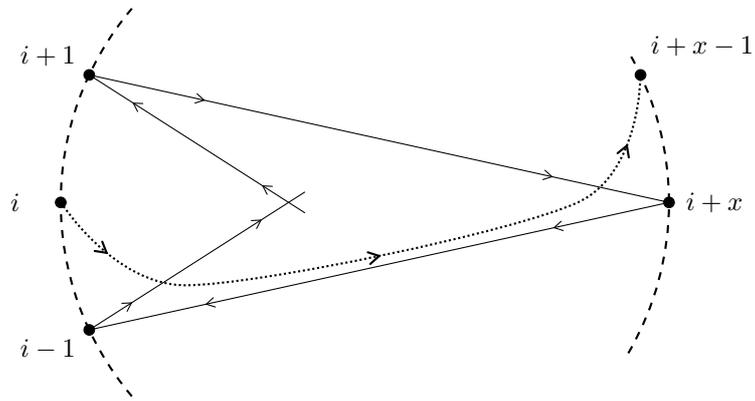
\begin{figure}[H]
  \vspace{1cm}
  \[
    \begin{tikzpicture}[baseline=(bb.base)]
 \newcommand{\goodarrow}{\arrow{angle 60}}
 \newcommand{\radius}{4cm} 
\newcommand{\eps}{17pt} 
\newcommand{\dotrad}{0.07cm} 

\path (0,0) node (bb) {};


\draw (220:\radius) arc (220:140:\radius) [thick, dashed];
\draw (-30:\radius) arc (-30:30:\radius) [thick, dashed];


\coordinate (a1) at (205:\radius);
\coordinate (a2) at (180:\radius);
\coordinate (a3) at (155:\radius);
\coordinate (a4) at (0:\radius);
\coordinate (a5) at (25:\radius);
\coordinate (b1) at (170:.2*\radius);

\foreach \n in {1,...,5} {\draw (a\n) circle(\dotrad) [fill=black];}
\draw (205:\radius+\eps) node {$i-1$};
  \draw (180:\radius+\eps) node {$i$};
  \draw (155:\radius+\eps) node {$i+1$};
  \draw (0:\radius+\eps) node {$i+x$};
  \draw (25:\radius+1.5*\eps) node {$i+x-1$};


  \draw plot [smooth]
  coordinates {(a4) (a1)}
  [ postaction=decorate, decoration={markings,
  mark= at position 0.2 with \goodarrow, mark = at position 0.8 with \goodarrow}];
  \draw plot [smooth]
  coordinates {(a3) (a4)}
  [ postaction=decorate, decoration={markings,
  mark= at position 0.2 with \goodarrow, mark = at position 0.8 with \goodarrow}];

  \draw plot 
  coordinates {(a1) (b1)}
  [ postaction=decorate, decoration={markings,
  mark= at position 0.2 with \goodarrow, mark = at position 0.8 with \goodarrow}];
  \draw plot 
  coordinates {(190:.2*\radius) (a3)}
  [ postaction=decorate, decoration={markings,
  mark= at position 0.2 with \goodarrow, mark = at position 0.8 with \goodarrow}]
  ;

  \draw[thick, densely dotted] plot [smooth]
  coordinates {(a2) (205:.65*\radius) (0: .7*\radius)(a5)}
[ postaction=decorate, decoration={markings,
mark= at position 0.1 with \goodarrow, mark = at position .5 with \goodarrow , mark = at position 0.9 with \goodarrow}]
  ;

 \end{tikzpicture}
\]
\caption{How to draw strand $i$ for $i$ even in $(x-1, 2x)$-Postnikov diagrams.}
\label{fig:cobconst}
\end{figure}

\newpage
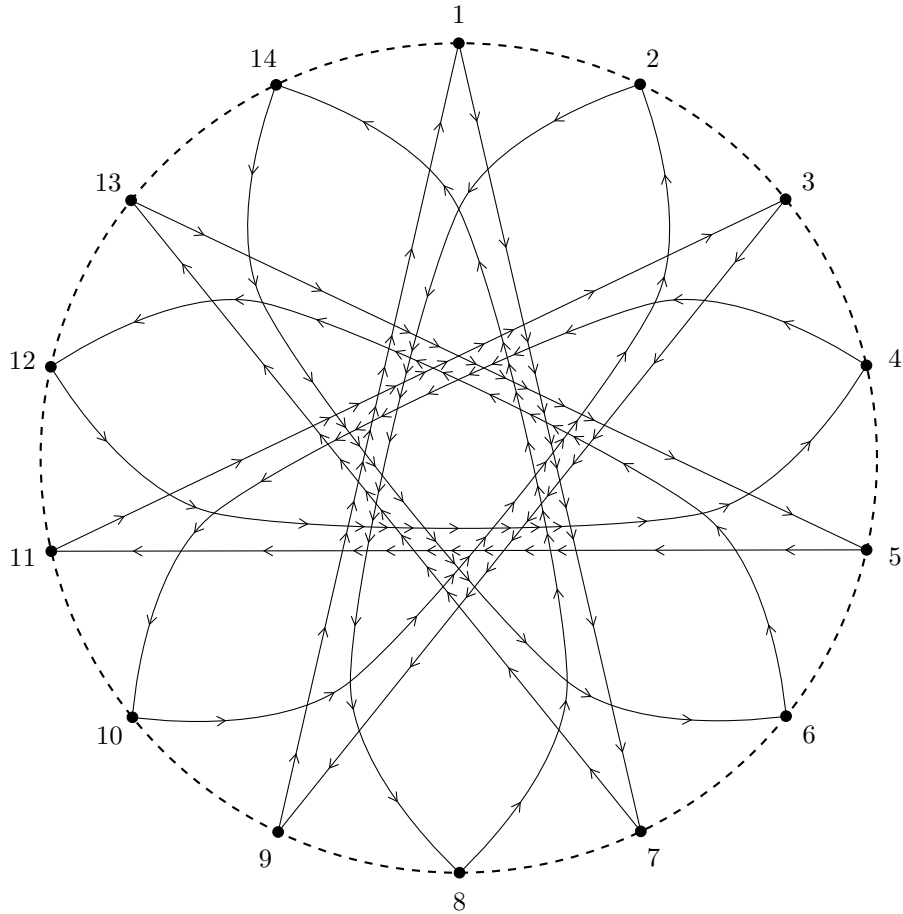
\begin{figure}[H]
\[
  \begin{tikzpicture}[baseline=(bb.base)]

\newcommand{\goodarrow}{\arrow{angle 60}}
\newcommand{\bstart}{115.7} 
\newcommand{\nth}{25.7} 
\newcommand{\radius}{5.5cm} 
\newcommand{\eps}{11pt} 
\newcommand{\dotrad}{0.07cm} 

\path (0,0) node (bb) {};


\draw (0,0) circle(\radius) [thick,dashed];

\foreach \n in {1,...,14}
{ \coordinate (b\n) at (\bstart-\nth*\n:\radius);
  \draw (\bstart-\nth*\n:\radius+\eps) node {$\n$}; }
  

\foreach \n in {1,...,14} {\draw (b\n) circle(\dotrad) [fill=black];}


\draw  plot [smooth]
coordinates {(b1) (b7)}
[ postaction=decorate, decoration={markings,
  mark= at position 0.1 with \goodarrow, mark= at position 0.26 with \goodarrow,
  mark= at position 0.38 with \goodarrow, mark= at position 0.42 with \goodarrow,
  mark= at position 0.47 with \goodarrow, mark= at position 0.51 with \goodarrow,
  mark= at position 0.54 with \goodarrow, mark= at position 0.59 with \goodarrow,
  mark= at position 0.63 with \goodarrow, mark= at position 0.74 with \goodarrow,
 mark= at position 0.9 with \goodarrow}];

\draw  plot [smooth]
coordinates {(b3) (b9)}[ postaction=decorate, decoration={markings,
  mark= at position 0.1 with \goodarrow, mark= at position 0.26 with \goodarrow,
  mark= at position 0.38 with \goodarrow, mark= at position 0.42 with \goodarrow,
  mark= at position 0.47 with \goodarrow, mark= at position 0.51 with \goodarrow,
  mark= at position 0.54 with \goodarrow, mark= at position 0.59 with \goodarrow,
  mark= at position 0.63 with \goodarrow, mark= at position 0.74 with \goodarrow,
 mark= at position 0.9 with \goodarrow}];
\draw  plot [smooth]
coordinates {(b5) (b11)}[ postaction=decorate, decoration={markings,
  mark= at position 0.1 with \goodarrow, mark= at position 0.26 with \goodarrow,
  mark= at position 0.38 with \goodarrow, mark= at position 0.42 with \goodarrow,
  mark= at position 0.47 with \goodarrow, mark= at position 0.51 with \goodarrow,
  mark= at position 0.54 with \goodarrow, mark= at position 0.59 with \goodarrow,
  mark= at position 0.63 with \goodarrow, mark= at position 0.74 with \goodarrow,
 mark= at position 0.9 with \goodarrow}];
\draw  plot [smooth]
coordinates {(b7) (b13)}[ postaction=decorate, decoration={markings,
  mark= at position 0.1 with \goodarrow, mark= at position 0.26 with \goodarrow,
  mark= at position 0.38 with \goodarrow, mark= at position 0.42 with \goodarrow,
  mark= at position 0.47 with \goodarrow, mark= at position 0.51 with \goodarrow,
  mark= at position 0.54 with \goodarrow, mark= at position 0.59 with \goodarrow,
  mark= at position 0.63 with \goodarrow, mark= at position 0.74 with \goodarrow,
 mark= at position 0.9 with \goodarrow}];
\draw  plot [smooth]
coordinates {(b9) (b1)}[ postaction=decorate, decoration={markings,
  mark= at position 0.1 with \goodarrow, mark= at position 0.26 with \goodarrow,
  mark= at position 0.38 with \goodarrow, mark= at position 0.42 with \goodarrow,
  mark= at position 0.47 with \goodarrow, mark= at position 0.51 with \goodarrow,
  mark= at position 0.54 with \goodarrow, mark= at position 0.59 with \goodarrow,
  mark= at position 0.63 with \goodarrow, mark= at position 0.74 with \goodarrow,
 mark= at position 0.9 with \goodarrow}];
\draw  plot [smooth]
coordinates {(b11) (b3)}[ postaction=decorate, decoration={markings,
  mark= at position 0.1 with \goodarrow, mark= at position 0.26 with \goodarrow,
  mark= at position 0.38 with \goodarrow, mark= at position 0.42 with \goodarrow,
  mark= at position 0.47 with \goodarrow, mark= at position 0.51 with \goodarrow,
  mark= at position 0.54 with \goodarrow, mark= at position 0.59 with \goodarrow,
  mark= at position 0.63 with \goodarrow, mark= at position 0.74 with \goodarrow,
 mark= at position 0.9 with \goodarrow}];
\draw  plot [smooth]
coordinates {(b13) (b5)}[ postaction=decorate, decoration={markings,
  mark= at position 0.1 with \goodarrow, mark= at position 0.26 with \goodarrow,
  mark= at position 0.38 with \goodarrow, mark= at position 0.42 with \goodarrow,
  mark= at position 0.47 with \goodarrow, mark= at position 0.51 with \goodarrow,
  mark= at position 0.54 with \goodarrow, mark= at position 0.59 with \goodarrow,
  mark= at position 0.63 with \goodarrow, mark= at position 0.74 with \goodarrow,
 mark= at position 0.9 with \goodarrow}];

\draw  plot [smooth]
coordinates {(b10) (90+6*\nth:.6*\radius) (90-2*\nth:.6*\radius)(b2)}[ postaction=decorate, decoration={markings,
  mark= at position 0.1 with \goodarrow, mark= at position 0.22 with \goodarrow,
  mark= at position 0.34 with \goodarrow, mark= at position 0.4 with \goodarrow,
  mark= at position 0.425 with \goodarrow, mark= at position 0.452 with \goodarrow,
  mark= at position 0.5 with \goodarrow, mark= at position 0.555 with \goodarrow,
  mark= at position 0.586 with \goodarrow, mark= at position 0.61 with \goodarrow,
  mark= at position 0.7 with \goodarrow, mark= at position 0.79 with \goodarrow,
 mark= at position 0.9 with \goodarrow}];
\draw  plot [smooth]
coordinates {(b2) (90:.6*\radius) (90-8*\nth:.6*\radius)(b8)}[ postaction=decorate, decoration={markings,
  mark= at position 0.1 with \goodarrow, mark= at position 0.22 with \goodarrow,
  mark= at position 0.34 with \goodarrow, mark= at position 0.4 with \goodarrow,
  mark= at position 0.425 with \goodarrow, mark= at position 0.452 with \goodarrow,
  mark= at position 0.5 with \goodarrow, mark= at position 0.555 with \goodarrow,
  mark= at position 0.586 with \goodarrow, mark= at position 0.61 with \goodarrow,
  mark= at position 0.7 with \goodarrow, mark= at position 0.79 with \goodarrow,
 mark= at position 0.9 with \goodarrow}];
\draw  plot [smooth]
coordinates {(b8) (90+8*\nth:.6*\radius) (90:.6*\radius)(b14)}[ postaction=decorate, decoration={markings,
  mark= at position 0.1 with \goodarrow, mark= at position 0.22 with \goodarrow,
  mark= at position 0.34 with \goodarrow, mark= at position 0.4 with \goodarrow,
  mark= at position 0.425 with \goodarrow, mark= at position 0.452 with \goodarrow,
  mark= at position 0.5 with \goodarrow, mark= at position 0.555 with \goodarrow,
  mark= at position 0.586 with \goodarrow, mark= at position 0.61 with \goodarrow,
  mark= at position 0.7 with \goodarrow, mark= at position 0.79 with \goodarrow,
 mark= at position 0.9 with \goodarrow}];
\draw  plot [smooth]
coordinates {(b14) (90+2*\nth:.6*\radius) (90-6*\nth:.6*\radius)(b6)}[ postaction=decorate, decoration={markings,
  mark= at position 0.1 with \goodarrow, mark= at position 0.22 with \goodarrow,
  mark= at position 0.34 with \goodarrow, mark= at position 0.4 with \goodarrow,
  mark= at position 0.425 with \goodarrow, mark= at position 0.452 with \goodarrow,
  mark= at position 0.5 with \goodarrow, mark= at position 0.555 with \goodarrow,
  mark= at position 0.586 with \goodarrow, mark= at position 0.61 with \goodarrow,
  mark= at position 0.7 with \goodarrow, mark= at position 0.79 with \goodarrow,
 mark= at position 0.9 with \goodarrow}];
\draw  plot [smooth]
coordinates {(b6) (90+10*\nth:.6*\radius) (90+2*\nth:.6*\radius)(b12)}[ postaction=decorate, decoration={markings,
  mark= at position 0.1 with \goodarrow, mark= at position 0.22 with \goodarrow,
  mark= at position 0.34 with \goodarrow, mark= at position 0.4 with \goodarrow,
  mark= at position 0.425 with \goodarrow, mark= at position 0.452 with \goodarrow,
  mark= at position 0.5 with \goodarrow, mark= at position 0.555 with \goodarrow,
  mark= at position 0.586 with \goodarrow, mark= at position 0.61 with \goodarrow,
  mark= at position 0.7 with \goodarrow, mark= at position 0.79 with \goodarrow,
 mark= at position 0.9 with \goodarrow}];
\draw  plot [smooth]
coordinates {(b12) (90+4*\nth:.6*\radius) (90-4*\nth:.6*\radius)(b4)}[ postaction=decorate, decoration={markings,
  mark= at position 0.1 with \goodarrow, mark= at position 0.22 with \goodarrow,
  mark= at position 0.34 with \goodarrow, mark= at position 0.4 with \goodarrow,
  mark= at position 0.425 with \goodarrow, mark= at position 0.452 with \goodarrow,
  mark= at position 0.5 with \goodarrow, mark= at position 0.555 with \goodarrow,
  mark= at position 0.586 with \goodarrow, mark= at position 0.61 with \goodarrow,
  mark= at position 0.7 with \goodarrow, mark= at position 0.79 with \goodarrow,
 mark= at position 0.9 with \goodarrow}];
\draw  plot [smooth]
coordinates {(b4) (90-2*\nth:.6*\radius) (90+4*\nth:.6*\radius)(b10)}[ postaction=decorate, decoration={markings,
  mark= at position 0.1 with \goodarrow, mark= at position 0.22 with \goodarrow,
  mark= at position 0.34 with \goodarrow, mark= at position 0.4 with \goodarrow,
  mark= at position 0.425 with \goodarrow, mark= at position 0.452 with \goodarrow,
  mark= at position 0.5 with \goodarrow, mark= at position 0.555 with \goodarrow,
  mark= at position 0.586 with \goodarrow, mark= at position 0.61 with \goodarrow,
  mark= at position 0.7 with \goodarrow, mark= at position 0.79 with \goodarrow,
 mark= at position 0.9 with \goodarrow}];

 \end{tikzpicture}
\]
\caption{A  symmetric $(6,14)$-Postnikov diagram.}
\label{fig:post614}
\end{figure}

\begin{figure}[H]
  \vspace{1cm}
\[  
  \begin{tikzpicture}[baseline=(bb.base), 
	xyarrow/.style = {black,very thin, -to}
      ]

\newcommand{\bstartb}{90} 
\newcommand{\bstarta}{90-18} 
\newcommand{\nth}{36} 
\newcommand{\radius}{3.5cm} 
\newcommand{\dotrad}{.9pt} 

\path (0,0) node (bb){};

\foreach \n in {1,...,10}
{ \node (b\n) at (\bstarta-\nth*\n:\radius) {};
  \draw (b\n) circle(\dotrad) [fill=black];
}

\foreach\n in {1, ..., 5 }
{ 
  \node (e\n) at (\bstartb-\nth*\n*2:\radius*.45){};
  \draw (e\n) circle(\dotrad) [fill=black];
}

 \foreach\n in {1, ..., 4 }{
   \draw [xyarrow] (e\n) edge (e\the\numexpr\n+1\relax);
}

\foreach \n in {1,...,4}{
  \draw [xyarrow] (b\the\numexpr\n*2+1\relax) edge (b\the\numexpr\n*2\relax);
  \draw [xyarrow] (b\the\numexpr\n*2+1\relax) edge (b\the\numexpr\n*2+2\relax);
  \draw [xyarrow] (e\the\numexpr\n\relax) edge (b\the\numexpr\n*2-1\relax);
  \draw [xyarrow] (b\the\numexpr\n*2\relax) edge (e\the\numexpr\n\relax);
}

  \draw [xyarrow] (e5) edge (b9);
  \draw [xyarrow] (e5) edge (e1);
  \draw [xyarrow] (b10) edge (e5);
  \draw [xyarrow] (b1) edge (b10);
  \draw [xyarrow] (b1) edge (b2);

\end{tikzpicture}  
\hspace{.5cm}
\begin{tikzpicture}[baseline=(bb.base), 
	xyarrow/.style = {black, very thin, -to}
      ]

\newcommand{\bstartb}{90} 
\newcommand{\bstarta}{90-25.71/2} 
\newcommand{\nth}{25.71} 
\newcommand{\radius}{3.5cm} 
\newcommand{\dotrad}{.9pt} 

\path (0,0) node (bb){};

\foreach \n in {1,...,14}
{   \node (c\n) at (\bstarta-\nth*\n:\radius){};
 \draw (c\n) circle(\dotrad) [fill=black];
 \node (d\n) at (\bstarta-\nth*\n:\radius*.65){};
\draw (d\n) circle(\dotrad) [fill=black];
}

\foreach\n in {1, ..., 7 }
{ 
  \node (e\n) at (\bstartb-\nth*\n*2:\radius*.3){};
  \draw (e\n) circle(\dotrad) [fill=black];
}

 \foreach\n in {1, ..., 6 }{
   \draw [xyarrow] (e\n) edge (e\the\numexpr\n+1\relax);
}

\foreach \n in {1,...,6}{
  \draw [xyarrow] (d\the\numexpr\n*2+1\relax) edge (d\the\numexpr\n*2\relax);
  \draw [xyarrow] (d\the\numexpr\n*2+1\relax) edge (d\the\numexpr\n*2+2\relax);
  \draw [xyarrow] (c\the\numexpr\n*2\relax) edge (c\the\numexpr\n*2+1\relax);
  \draw [xyarrow] (c\the\numexpr\n*2\relax) edge (c\the\numexpr\n*2-1\relax);
  \draw [xyarrow] (c\the\numexpr\n*2+1\relax) edge (d\the\numexpr\n*2+1\relax);
  \draw [xyarrow] (d\the\numexpr\n*2\relax) edge (c\the\numexpr\n*2\relax);
  \draw [xyarrow] (e\the\numexpr\n\relax) edge (d\the\numexpr\n*2-1\relax);
  \draw [xyarrow] (d\the\numexpr\n*2\relax) edge (e\the\numexpr\n\relax);

}
\draw [xyarrow] (d1) edge (d2);
\draw [xyarrow] (c1) edge (d1);
\draw [xyarrow] (d1) edge (d14);
\draw [xyarrow] (c14) edge (c13);
\draw [xyarrow] (c14) edge (c1);
\draw [xyarrow] (d14) edge (c14);
\draw [xyarrow] (d14) edge (e7);
\draw [xyarrow] (e7) edge (d13);
\draw [xyarrow] (e7) edge (e1);

    \end{tikzpicture}
\]
\caption{The self-injective quivers with potential $\on{Cob}^-(5)$ and $\on{Cob}^-(7)$ (for $(k,n) = (6,10)$ and $(8,14)$).}
\label{fig:cob1}
\end{figure}
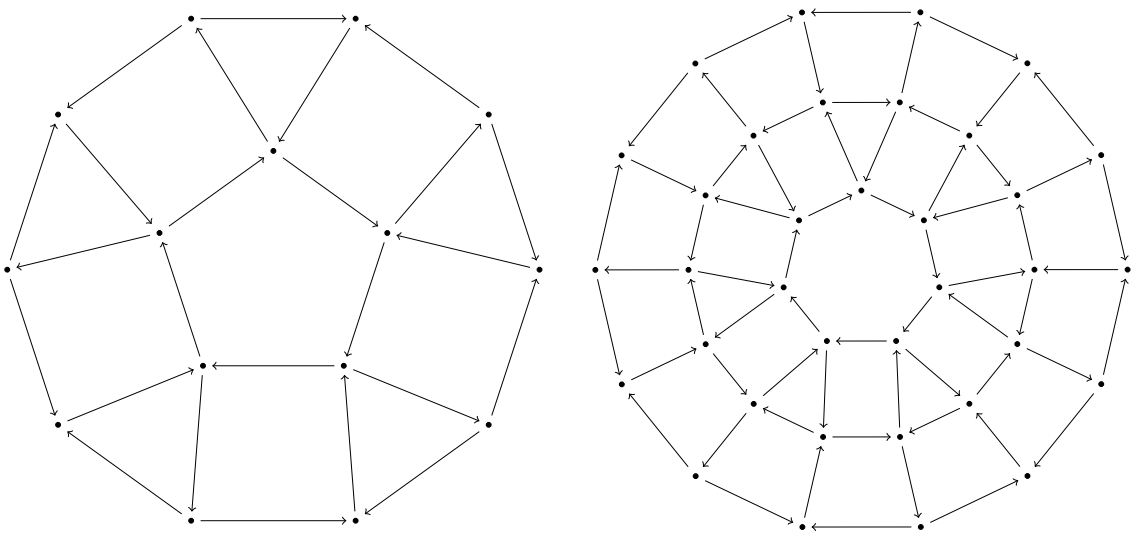

\newpage
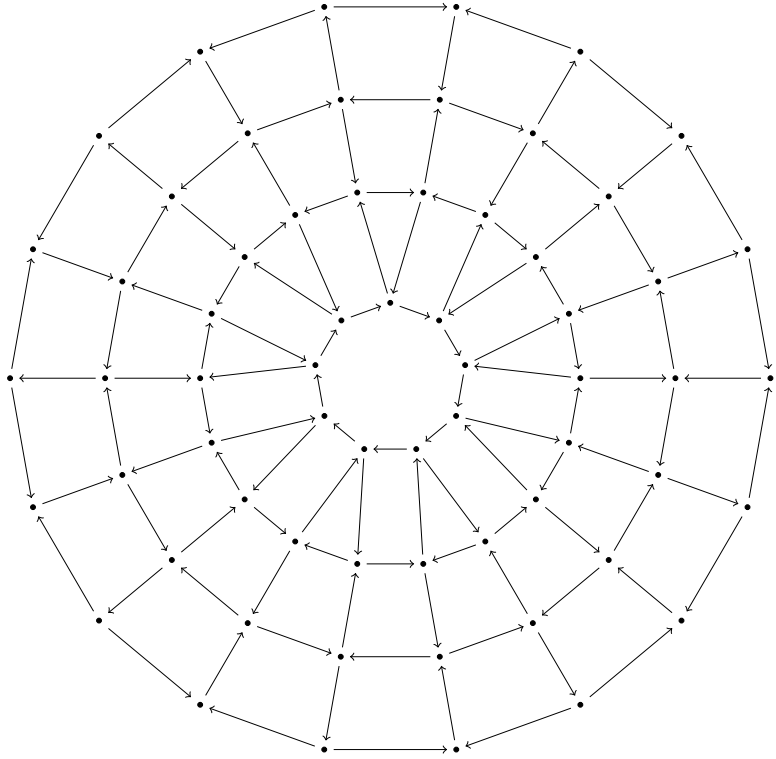
\begin{figure}[H]
  \vspace{1cm}
\[    \begin{tikzpicture}[baseline=(bb.base), 
	xyarrow/.style = {black, very thin, -to}
      ]

\newcommand{\bstartb}{90} 
\newcommand{\bstarta}{80} 
\newcommand{\nth}{20} 
\newcommand{\radius}{5cm} 
\newcommand{\dotrad}{.9pt} 

\path (0,0) node (bb){};

\foreach \n in {1,...,18}
{ \node (b\n) at (\bstarta-\nth*\n:\radius) {};
  \draw (b\n) circle(\dotrad) [fill=black];
  \node (c\n) at (\bstarta-\nth*\n:\radius*.75){};
 \draw (c\n) circle(\dotrad) [fill=black];
 \node (d\n) at (\bstarta-\nth*\n:\radius*.5){};
\draw (d\n) circle(\dotrad) [fill=black];
}

\foreach\n in {1, ..., 9 }
{ 
  \node (e\n) at (\bstartb-\nth*\n*2:\radius*.2){};
  \draw (e\n) circle(\dotrad) [fill=black];
}

 \foreach\n in {1, ..., 8 }{
   \draw [xyarrow] (e\n) edge (e\the\numexpr\n+1\relax);
}

\foreach \n in {1,...,8}{
  \draw [xyarrow] (d\the\numexpr\n*2+1\relax) edge (d\the\numexpr\n*2\relax);
  \draw [xyarrow] (d\the\numexpr\n*2+1\relax) edge (d\the\numexpr\n*2+2\relax);
  \draw [xyarrow] (c\the\numexpr\n*2\relax) edge (c\the\numexpr\n*2+1\relax);
  \draw [xyarrow] (c\the\numexpr\n*2\relax) edge (c\the\numexpr\n*2-1\relax);
  \draw [xyarrow] (b\the\numexpr\n*2+1\relax) edge (b\the\numexpr\n*2\relax);
  \draw [xyarrow] (b\the\numexpr\n*2+1\relax) edge (b\the\numexpr\n*2+2\relax);
  \draw [xyarrow] (c\the\numexpr\n*2+1\relax) edge (d\the\numexpr\n*2+1\relax);
  \draw [xyarrow] (d\the\numexpr\n*2\relax) edge (c\the\numexpr\n*2\relax);
  \draw [xyarrow] (c\the\numexpr\n*2+1\relax) edge (b\the\numexpr\n*2+1\relax);
  \draw [xyarrow] (b\the\numexpr\n*2\relax) edge (c\the\numexpr\n*2\relax);
  \draw [xyarrow] (e\the\numexpr\n\relax) edge (d\the\numexpr\n*2-1\relax);
  \draw [xyarrow] (d\the\numexpr\n*2\relax) edge (e\the\numexpr\n\relax);

}
\draw [xyarrow] (e9) edge (e1);
\draw [xyarrow] (e9) edge (d17);
\draw [xyarrow] (d18) edge (e9);
\draw [xyarrow] (d1) edge (d18);
\draw [xyarrow] (d1) edge (d2);
\draw [xyarrow] (b1) edge (b18);
\draw [xyarrow] (b1) edge (b2);
\draw [xyarrow] (c18) edge (c1);
\draw [xyarrow] (c18) edge (c17);
\draw [xyarrow] (c1) edge (d1);
\draw [xyarrow] (c1) edge (b1);
\draw [xyarrow] (b18) edge (c18);
\draw [xyarrow] (d18) edge (c18);

    \end{tikzpicture}
\]
\caption{The self-injective quiver with potential $ \on{Cob}^-(9)$ (for $(k,n) = (10,18)$).}
\label{fig:cob2}
\end{figure}

\begin{figure}[H]
  \vspace{1cm}
  \[
    \resizebox{9cm}{!}{\begin{xy} 0;<.8pt,0pt>:<0pt,.8pt>::
(336,216) *+{\bullet} ="0",
(360,288) *+{\bullet} ="1",
(300,336) *+{\bullet} ="2",
(240,288) *+{\bullet} ="3",
(264,216) *+{\bullet} ="4",
(336,144) *+{\bullet} ="5",
(408,192) *+{\bullet} ="6",
(432,264) *+{\bullet} ="7",
(408,348) *+{\bullet} ="8",
(348,396) *+{\bullet} ="9",
(252,396) *+{\bullet} ="10",
(192,348) *+{\bullet} ="11",
(168,264) *+{\bullet} ="12",
(192,192) *+{\bullet} ="13",
(264,144) *+{\bullet} ="14",
(456,120) *+{\bullet} ="15",
(504,48) *+{\bullet} ="16",
(540,204) *+{\bullet} ="17",
(492,372) *+{\bullet} ="18",
(576,396) *+{\bullet} ="19",
(444,492) *+{\bullet} ="20",
(252,492) *+{\bullet} ="21",
(252,588) *+{\bullet} ="22",
(132,456) *+{\bullet} ="23",
(84,288) *+{\bullet} ="24",
(0,312) *+{\bullet} ="25",
(72,168) *+{\bullet} ="26",
(216,72) *+{\bullet} ="27",
(168,0) *+{\bullet} ="28",
(312,24) *+{\bullet} ="29",
(432,0) *+{\bullet} ="30",
(384,72) *+{\bullet} ="31",
(600,312) *+{\bullet} ="32",
(516,288) *+{\bullet} ="33",
(348,588) *+{\bullet} ="34",
(348,492) *+{\bullet} ="35",
(24,396) *+{\bullet} ="36",
(108,372) *+{\bullet} ="37",
(144,120) *+{\bullet} ="38",
(96,48) *+{\bullet} ="39",
"0", {\ar"1"},
"4", {\ar"0"},
"0", {\ar"5"},
"6", {\ar"0"},
"1", {\ar"2"},
"1", {\ar"7"},
"8", {\ar"1"},
"2", {\ar"3"},
"2", {\ar"9"},
"10", {\ar"2"},
"3", {\ar"4"},
"3", {\ar"11"},
"12", {\ar"3"},
"4", {\ar"13"},
"14", {\ar"4"},
"5", {\ar"6"},
"5", {\ar"14"},
"15", {\ar"5"},
"29", {\ar"5"},
"5", {\ar"31"},
"7", {\ar"6"},
"6", {\ar"15"},
"7", {\ar"8"},
"17", {\ar"7"},
"18", {\ar"7"},
"7", {\ar"33"},
"9", {\ar"8"},
"8", {\ar"18"},
"9", {\ar"10"},
"20", {\ar"9"},
"21", {\ar"9"},
"9", {\ar"35"},
"11", {\ar"10"},
"10", {\ar"21"},
"11", {\ar"12"},
"23", {\ar"11"},
"24", {\ar"11"},
"11", {\ar"37"},
"13", {\ar"12"},
"12", {\ar"24"},
"13", {\ar"14"},
"26", {\ar"13"},
"27", {\ar"13"},
"13", {\ar"38"},
"14", {\ar"27"},
"15", {\ar"16"},
"31", {\ar"15"},
"16", {\ar"17"},
"16", {\ar"30"},
"33", {\ar"17"},
"18", {\ar"19"},
"33", {\ar"18"},
"19", {\ar"20"},
"19", {\ar"32"},
"35", {\ar"20"},
"21", {\ar"22"},
"35", {\ar"21"},
"22", {\ar"23"},
"22", {\ar"34"},
"37", {\ar"23"},
"24", {\ar"25"},
"37", {\ar"24"},
"25", {\ar"26"},
"25", {\ar"36"},
"38", {\ar"26"},
"27", {\ar"28"},
"38", {\ar"27"},
"28", {\ar"29"},
"28", {\ar"39"},
"31", {\ar"29"},
"30", {\ar"31"},
"32", {\ar"33"},
"34", {\ar"35"},
"36", {\ar"37"},
"39", {\ar"38"},
\end{xy}
}
\]
\caption{A self-injective quiver with potential for $(k,n) = (6,15)$.}
\label{fig:quiv615}
\end{figure}

\newpage
\begin{figure}[H]
  \[
    \resizebox{9cm}{!}{
\begin{xy} 0;<1.5pt,0pt>:<0pt,-1.5pt>:: 
(117,159) *+{\bullet} ="0",
(130,127) *+{\bullet} ="1",
(162,117) *+{\bullet} ="2",
(190,136) *+{\bullet} ="3",
(193,171) *+{\bullet} ="4",
(168,195) *+{\bullet} ="5",
(134,189) *+{\bullet} ="6",
(89,140) *+{\bullet} ="7",
(99,108) *+{\bullet} ="8",
(127,93) *+{\bullet} ="9",
(157,80) *+{\bullet} ="10",
(187,94) *+{\bullet} ="11",
(215,110) *+{\bullet} ="12",
(224,141) *+{\bullet} ="13",
(230,173) *+{\bullet} ="14",
(211,201) *+{\bullet} ="15",
(188,225) *+{\bullet} ="16",
(156,226) *+{\bullet} ="17",
(124,223) *+{\bullet} ="18",
(102,198) *+{\bullet} ="19",
(83,172) *+{\bullet} ="20",
(65,103) *+{\bullet} ="21",
(78,72) *+{\bullet} ="22",
(111,66) *+{\bullet} ="23",
(141,50) *+{\bullet} ="24",
(173,39) *+{\bullet} ="25",
(197,63) *+{\bullet} ="26",
(227,78) *+{\bullet} ="27",
(256,96) *+{\bullet} ="28",
(254,131) *+{\bullet} ="29",
(263,163) *+{\bullet} ="30",
(265,197) *+{\bullet} ="31",
(238,217) *+{\bullet} ="32",
(218,244) *+{\bullet} ="33",
(193,267) *+{\bullet} ="34",
(160,258) *+{\bullet} ="35",
(126,258) *+{\bullet} ="36",
(93,254) *+{\bullet} ="37",
(79,223) *+{\bullet} ="38",
(59,195) *+{\bullet} ="39",
(41,165) *+{\bullet} ="40",
(58,135) *+{\bullet} ="41",
(122,21) *+{\bullet} ="42",
(156,9) *+{\bullet} ="43",
(189,0) *+{\bullet} ="44",
(207,30) *+{\bullet} ="45",
(238,45) *+{\bullet} ="46",
(268,64) *+{\bullet} ="47",
(297,84) *+{\bullet} ="48",
(285,116) *+{\bullet} ="49",
(294,151) *+{\bullet} ="50",
(299,186) *+{\bullet} ="51",
(301,220) *+{\bullet} ="52",
(269,233) *+{\bullet} ="53",
(247,261) *+{\bullet} ="54",
(222,286) *+{\bullet} ="55",
(196,309) *+{\bullet} ="56",
(167,291) *+{\bullet} ="57",
(131,292) *+{\bullet} ="58",
(96,288) *+{\bullet} ="59",
(62,282) *+{\bullet} ="60",
(57,247) *+{\bullet} ="61",
(35,220) *+{\bullet} ="62",
(17,190) *+{\bullet} ="63",
(0,159) *+{\bullet} ="64",
(24,134) *+{\bullet} ="65",
(31,100) *+{\bullet} ="66",
(43,66) *+{\bullet} ="67",
(57,34) *+{\bullet} ="68",
(91,38) *+{\bullet} ="69",
"0", {\ar"1"},
"6", {\ar"0"},
"7", {\ar"0"},
"0", {\ar"20"},
"1", {\ar"2"},
"1", {\ar"8"},
"9", {\ar"1"},
"2", {\ar"3"},
"2", {\ar"10"},
"11", {\ar"2"},
"3", {\ar"4"},
"3", {\ar"12"},
"13", {\ar"3"},
"4", {\ar"5"},
"4", {\ar"14"},
"15", {\ar"4"},
"5", {\ar"6"},
"5", {\ar"16"},
"17", {\ar"5"},
"6", {\ar"18"},
"19", {\ar"6"},
"8", {\ar"7"},
"20", {\ar"7"},
"7", {\ar"41"},
"8", {\ar"9"},
"21", {\ar"8"},
"10", {\ar"9"},
"9", {\ar"23"},
"10", {\ar"11"},
"24", {\ar"10"},
"12", {\ar"11"},
"11", {\ar"26"},
"12", {\ar"13"},
"27", {\ar"12"},
"14", {\ar"13"},
"13", {\ar"29"},
"14", {\ar"15"},
"30", {\ar"14"},
"16", {\ar"15"},
"15", {\ar"32"},
"16", {\ar"17"},
"33", {\ar"16"},
"18", {\ar"17"},
"17", {\ar"35"},
"18", {\ar"19"},
"36", {\ar"18"},
"20", {\ar"19"},
"19", {\ar"38"},
"39", {\ar"20"},
"22", {\ar"21"},
"41", {\ar"21"},
"21", {\ar"66"},
"23", {\ar"22"},
"67", {\ar"22"},
"22", {\ar"69"},
"23", {\ar"24"},
"69", {\ar"23"},
"25", {\ar"24"},
"24", {\ar"42"},
"26", {\ar"25"},
"43", {\ar"25"},
"25", {\ar"45"},
"26", {\ar"27"},
"45", {\ar"26"},
"28", {\ar"27"},
"27", {\ar"46"},
"29", {\ar"28"},
"47", {\ar"28"},
"28", {\ar"49"},
"29", {\ar"30"},
"49", {\ar"29"},
"31", {\ar"30"},
"30", {\ar"50"},
"32", {\ar"31"},
"51", {\ar"31"},
"31", {\ar"53"},
"32", {\ar"33"},
"53", {\ar"32"},
"34", {\ar"33"},
"33", {\ar"54"},
"35", {\ar"34"},
"55", {\ar"34"},
"34", {\ar"57"},
"35", {\ar"36"},
"57", {\ar"35"},
"37", {\ar"36"},
"36", {\ar"58"},
"38", {\ar"37"},
"59", {\ar"37"},
"37", {\ar"61"},
"38", {\ar"39"},
"61", {\ar"38"},
"40", {\ar"39"},
"39", {\ar"62"},
"41", {\ar"40"},
"63", {\ar"40"},
"40", {\ar"65"},
"65", {\ar"41"},
"42", {\ar"43"},
"42", {\ar"69"},
"44", {\ar"43"},
"45", {\ar"44"},
"46", {\ar"45"},
"46", {\ar"47"},
"48", {\ar"47"},
"49", {\ar"48"},
"50", {\ar"49"},
"50", {\ar"51"},
"52", {\ar"51"},
"53", {\ar"52"},
"54", {\ar"53"},
"54", {\ar"55"},
"56", {\ar"55"},
"57", {\ar"56"},
"58", {\ar"57"},
"58", {\ar"59"},
"60", {\ar"59"},
"61", {\ar"60"},
"62", {\ar"61"},
"62", {\ar"63"},
"64", {\ar"63"},
"65", {\ar"64"},
"66", {\ar"65"},
"66", {\ar"67"},
"68", {\ar"67"},
"69", {\ar"68"},
\end{xy}
}
\]
\caption{A self-injective quiver with potential for $(k,n) = (6,21)$.}
\label{fig:quiv621}
\end{figure}

\begin{figure}[H]
  \[
    \begin{tikzpicture}[baseline=(bb.base)]

\newcommand{\strarrow}{\arrow{angle 60}}
\newcommand{\bstart}{114} %
\newcommand{\nth}{24} 
\newcommand{\radius}{4.5cm} 
\newcommand{\eps}{11pt} 
\newcommand{\dotrad}{0.07cm} 

\path (0,0) node (bb){};

\draw (0,0) circle(\radius) [thick, dashed];
\foreach \n in {1,...,15}
{ \coordinate (b\n) at (\bstart-\nth*\n:\radius);
  \draw (\bstart-\nth*\n:\radius+\eps) node {$\n$}; }
  

\foreach \n in {1,...,15} {\draw (b\n) circle(\dotrad) [fill=black];}

\foreach\n in {1, 7, 13, 4, 10}
{ 
  \coordinate (c\n) at (\bstart-\nth*\n-6*\nth:\radius);
  \draw plot coordinates {(b\n) (c\n)};
}

\foreach\n in {2, 8, 14, 5, 11}
{
  \coordinate (c\n) at (\bstart-\nth*\n-6*\nth:\radius);
  \draw plot [smooth]
  coordinates {(b\n)(\bstart-\nth*\n+1*\nth:\radius*.55) (\bstart-\nth*\n+4*\nth:\radius*.1)(\bstart-\nth*\n+7*\nth:\radius*.35)(c\n)};
}
\foreach\n in {3, 9, 15, 6, 12}
{
  \coordinate (c\n) at (\bstart-\nth*\n-6*\nth:\radius);
  \draw plot [smooth]
  coordinates {(b\n)(\bstart-\nth*\n-1*\nth:\radius*.5)(\bstart-\nth*\n-1.8*\nth:\radius*.26)(\bstart-\nth*\n-1*\nth:\radius*.15)(\bstart-\nth*\n-5*\nth:\radius*.05)
  (\bstart-\nth*\n-7*\nth:\radius*.15)(\bstart-\nth*\n+6*\nth:\radius*.28)(\bstart-\nth*\n+7*\nth:\radius*.4)
  (\bstart-\nth*\n+8*\nth:\radius*.8)(\bstart-\nth*\n+8.5*\nth:\radius*.9)(c\n)
  };
}

    \end{tikzpicture}
  \]
  \caption{The Postnikov diagram corresponding to the quiver of Figure \ref{fig:quiv615}.}
  \label{fig:post615}

\end{figure}

\newpage
\begin{figure}[H]
  \[
    \begin{tikzpicture}[baseline=(bb.base)]

\newcommand{\strarrow}{\arrow{angle 60}}
\newcommand{\bstart}{107.14} 
\newcommand{\nth}{17.14} 
\newcommand{\radius}{4.5cm} 
\newcommand{\eps}{11pt} 
\newcommand{\dotrad}{0.07cm} 

\path (0,0) node (bb){};

\draw (0,0) circle(\radius) [thick, dashed];
\foreach \n in {1,...,21}
{ \coordinate (b\n) at (\bstart-\nth*\n:\radius);
  \draw (\bstart-\nth*\n:\radius+\eps) node {$\n$}; }
  

\foreach \n in {1,...,21} {\draw (b\n) circle(\dotrad) [fill=black];}


\draw plot
coordinates {(b1) (b7)}
;
\draw plot
coordinates {(b7) (b13)}
;
\draw plot
coordinates {(b13) (b19)}
;
\draw plot
coordinates {(b19) (b4)}
;
\draw plot
coordinates {(b4) (b10)}
;
\draw plot
coordinates {(b10) (b16)}
;
\draw plot
coordinates {(b16) (b1)}
;

\draw plot[smooth]
coordinates{(b3) (90-3*\nth:.80*\radius) (90-6*\nth:.3*\radius) (90-8.5*\nth:.7*\radius) (b9)}
;
\draw plot[smooth]
coordinates{(b9) (90-9*\nth:.80*\radius) (90-12*\nth:.3*\radius) (90-14.5*\nth:.7*\radius)  (b15)}
;
\draw plot[smooth]
coordinates{(b15) (90-15*\nth:.80*\radius) (90-18*\nth:.3*\radius) (90-20.5*\nth:.7*\radius) (b21)}
;
\draw plot[smooth]
coordinates{(b21) (90-21*\nth:.80*\radius) (90-3*\nth:.3*\radius) (90-5.5*\nth:.7*\radius)  (b6)}
;
\draw plot[smooth]
coordinates{(b6) (90-6*\nth:.80*\radius) (90-9*\nth:.3*\radius) (90-11.5*\nth:.7*\radius) (b12)}
;
\draw plot[smooth]
coordinates{(b12) (90-12*\nth:.80*\radius) (90-15*\nth:.3*\radius) (90-17.5*\nth:.7*\radius)  (b18)}
;
\draw plot[smooth]
coordinates{(b18) (90-18*\nth:.80*\radius) (90-21*\nth:.3*\radius) (90-2.5*\nth:.7*\radius) (b3)}
;

\draw plot[smooth]
coordinates {(b2) (90+\nth:.93*\radius)  (90+\nth:.7*\radius)
(90+3*\nth:.5*\radius)(90+6*\nth:.2*\radius) (90+9*\nth:.5*\radius)
(90+11*\nth:.68*\radius)(90+13.5*\nth:.78*\radius)(b8)}
;
\draw plot[smooth]
coordinates {(b8) (90-5*\nth:.93*\radius)  (90-5*\nth:.7*\radius)
(90-3*\nth:.5*\radius)(90-0*\nth:.2*\radius) (90+3*\nth:.5*\radius)
(90+5*\nth:.68*\radius)(90+7.5*\nth:.78*\radius)(b14)}
;
\draw plot[smooth]
coordinates {(b14) (90-11*\nth:.93*\radius)  (90-11*\nth:.7*\radius)
(90-9*\nth:.5*\radius)(90-6*\nth:.2*\radius) (90-3*\nth:.5*\radius)
(90-1*\nth:.68*\radius) (90+1.5*\nth:.78*\radius)(b20)}
;
\draw plot[smooth]
coordinates {(b20) (90-17*\nth:.93*\radius)  (90-17*\nth:.7*\radius)
(90-15*\nth:.5*\radius)(90-12*\nth:.2*\radius) (90-9*\nth:.5*\radius)
(90-7*\nth:.68*\radius) (90-4.5*\nth:.78*\radius)(b5)}
;
\draw plot[smooth]
coordinates {(b5) (90-23*\nth:.93*\radius)  (90-23*\nth:.7*\radius)
(90-21*\nth:.5*\radius)(90-18*\nth:.2*\radius) (90-15*\nth:.5*\radius)
(90-13*\nth:.68*\radius) (90-10.5*\nth:.78*\radius)(b11)}
;
\draw plot[smooth]
coordinates {(b11) (90-29*\nth:.93*\radius)  (90-29*\nth:.7*\radius)
(90-27*\nth:.5*\radius)(90-24*\nth:.2*\radius) (90-21*\nth:.5*\radius)
(90-19*\nth:.68*\radius) (90-16.5*\nth:.78*\radius)(b17)}
;
\draw plot[smooth]
coordinates {(b17) (90-35*\nth:.93*\radius)  (90-35*\nth:.7*\radius)
(90-33*\nth:.5*\radius)(90-30*\nth:.2*\radius) (90-27*\nth:.5*\radius)
(90-25*\nth:.68*\radius) (90-22.5*\nth:.78*\radius)(b2)}
;

    \end{tikzpicture}
  \]
  \caption{The Postnikov diagram corresponding to the quiver of Figure \ref{fig:quiv621}.}
  \label{fig:post621}
\end{figure}

\newpage

\section*{}
\bibliographystyle{alpha}

\end{document}